\documentclass[12pt, reqno]{amsart}
\usepackage{amsmath, amsthm, amssymb}
\usepackage[margin=1.0in]{geometry}
\usepackage{xcolor}
\definecolor{cite}{rgb}{0.30,0.60,1.00}
\definecolor{url}{rgb}{0.00,0.00,0.80}
\definecolor{link}{rgb}{0.40,0.10,0.20}

\usepackage{zref-clever}

\AddToHook{env/theorem/begin}{%
	\zcsetup{countertype={theorem=theorem}}}
\AddToHook{env/claim/begin}{%
	\zcsetup{countertype={theorem=theorem}}}  
\AddToHook{env/proposition/begin}{%
	\zcsetup{countertype={theorem=proposition}}}
\AddToHook{env/lemma/begin}{%
	\zcsetup{countertype={theorem=lemma}}}
\AddToHook{env/conjecture/begin}{%
	\zcsetup{countertype={theorem=theorem}}}  
\AddToHook{env/corollary/begin}{%
	\zcsetup{countertype={theorem=corollary}}}
\AddToHook{env/definition/begin}{%
	\zcsetup{countertype={theorem=definition}}}
\AddToHook{env/remark/begin}{%
	\zcsetup{countertype={theorem=remark}}}
\AddToHook{env/example/begin}{%
	\zcsetup{countertype={theorem=example}}}

\zcsetup{nameinlink=false}

\usepackage[pdfusetitle,colorlinks,linkcolor=link,urlcolor=url,citecolor=cite,pagebackref,breaklinks]{hyperref}
\usepackage{mathdots}
\usepackage{mathtools}

\newtheorem{theorem}{Theorem}[section]

\newtheorem{proposition}[theorem]{Proposition}

\newtheorem{corollary}[theorem]{Corollary}
\theoremstyle{definition}

\newtheorem{remark}[theorem]{Remark}

\newcommand{\Cref}[1]{\zcref[S]{#1}}

\newcommand{\zIntegers}{\mathbb{Z}}
\newcommand{\cComplex}{\mathbb{C}}
\newcommand{\coefficientsField}{\mathbb{K}}
\newcommand{\multiplicativegroup}[1]{#1^{\times}}
\newcommand{\ImageOf}{\mathrm{Im}}
\newcommand{\Hom}{\mathrm{Hom}}
\newcommand{\Isom}{\mathrm{Isom}}
\newcommand{\EndomorphismRing}{\operatorname{End}}
\newcommand{\idmap}{\mathrm{id}}

\newcommand{\sizeof}[1]{\left|#1\right|}
\newcommand{\transpose}[1]{\, {}^{t}#1}
\newcommand{\zetaIntegral}{\mathrm{Z}}
\newcommand{\naturalZetaIntegral}{\mathrm{Z}^{\natural}}

\newcommand{\innerproduct}[2]{\left\langle #1,#2\right\rangle}

\newcommand{\fieldCharacter}{\psi}

\newcommand{\IdentityMatrix}[1]{I_{#1}}
\newcommand{\diag}{\mathrm{diag}}
\newcommand{\trace}{\operatorname{tr}}
\newcommand{\GL}{\mathrm{GL}}
\newcommand{\Mat}[2]{\operatorname{Mat}_{#1 \times #2}}

\newcommand{\finiteField}{\mathbb{F}}
\newcommand{\squareMatrix}{\operatorname{Mat}}
\newcommand{\GaussSum}[2]{\mathcal{G}\left(#1, #2\right)}

\newcommand{\fourierTransform}[1]{\mathcal{F}_{#1}}
\newcommand{\GaussSumScalar}[2]{\mathrm{G}\left(#1, #2\right)}
\newcommand{\GaussSumScalarFunction}[2]{\mathrm{G}_{#1, #2}}

\newcommand{\rank}{\mathrm{rk}}
\newcommand{\srank}{\mathrm{srk}}

\newcommand{\Kovacs}{Kov\'acs}
\newcommand{\Schwartz}{\mathcal{S}}
\newcommand{\evaluation}{\mathrm{eval}}
\newcommand{\regular}{\mathrm{reg}}
\newcommand{\singular}{\mathrm{sng}}
\newcommand{\algebraIsomorphism}{\mathfrak{c}}
\newcommand{\abs}[1]{\left|#1\right|}
\newcommand{\differential}{\mathrm{d}}
\newcommand{\mdifferential}{\differential^{\times}}

\newcommand{\kostkaFoulkes}{\mathrm{K}}
\newcommand{\coinvKostkaFoulkes}{\tilde{\mathrm{K}}^{\ast}}

\newcommand{\pHJacquetModule}[2]{J^H_{+}\left(#1, #2\right)}
\newcommand{\pnHJacquetModule}[2]{J^H_{\pm}\left(#1, #2\right)}
\newcommand{\nHJacquetModule}[2]{J^H_{-}\left(#1, #2\right)}
\newcommand{\ProjectionOperator}{\operatorname{pr}}

\hypersetup{pdfauthor={Elad Zelingher},
	pdfsubject={Number theory, Representation theory},
	pdfkeywords={Gauss sums, Godement--Jacquet, Fourier transform, Exponential sums}}

\title[Godement--Jacquet functional equation]{On the Godement--Jacquet functional equation for finite matrix monoids}

\author{Elad Zelingher}
\address{Department of Mathematics, University of California, Los Angeles, CA, 90095 USA}
\email{eladz@math.ucla.edu}

\subjclass[2020]{20C33, 11L05, 11T24}

\begin{document}

\begin{abstract}
	In previous work with Harman and Snowden~\cite{HarmanSnowdenZelingher2025}, we found explicit formulas for units of rank ideals of the matrix monoid algebra $\coefficientsField\left[\squareMatrix_n\left(\finiteField_q\right)\right]$, where $\coefficientsField$ is an algebraically closed field with characteristic not dividing $q$. In this work, we use these explicit formulas to establish a regularized Godement--Jacquet functional equation valid for irreducible representations of $\GL_n\left(\finiteField_q\right)$ and of the algebra $\coefficientsField\left[\squareMatrix_n\left(\finiteField_q\right)\right]$. Using this functional equation we give an explicit expression for primitive central idempotents corresponding to irreducible representations of $\coefficientsField\left[\squareMatrix_n\left(\finiteField_q\right)\right]$, where the characteristic of $\coefficientsField$ does not divide $\sizeof{\GL_n\left(\finiteField_q\right)}$. Interestingly, the coefficients in this formula are closely related to degenerate non-abelian Gauss sums.
\end{abstract}
\maketitle

\tableofcontents

\section{Introduction}

$L$-functions are important objects in number theory. In the 1970s, Godement--Jacquet~\cite{GodementJacquet1972} constructed a zeta integral for the standard $L$-function of an irreducible cuspidal automorphic representation of $\GL_n$. Their work is a natural generalization of the construction given by Tate~\cite{Tate1967} in his doctoral thesis for $L$-functions of Hecke characters. Like Tate, Godement--Jacquet showed that their zeta integral converges in a right half-plane, and that it admits a meromorphic continuation to the entire plane. They showed that for good data this zeta integral is (almost) the standard $L$-function attached to the automorphic representation in question. They used the Poisson summation formula to establish the functional equation for this standard $L$-function.

This construction, like the construction in Tate's thesis, relies on local zeta integrals and their corresponding functional equations. Let $F$ be a local field and let $\pi$ be an admissible irreducible representation of $\GL_n\left(F\right)$. The local Godement--Jacquet zeta integral is defined as
\begin{displaymath}
Z\left(s,v,v^{\vee}, f\right) = \int_{\GL_n\left(F\right)} \innerproduct{\pi\left(g\right) v}{v^{\vee}}f\left(g\right) \abs{\det g}^{s + \frac{n-1}{2}} \mdifferential g.
\end{displaymath}
Here $v \in \pi$, $v^{\vee} \in \pi^{\vee}$, $s \in \cComplex$ and $f \colon \squareMatrix_n\left(F\right) \to \cComplex$ is a Schwartz function. This integral converges for $s$ in a right half-plane (depending only on $\pi$) and has a meromorphic continuation to the entire plane, which we continue to denote by the same symbol. Godement--Jacquet proved that this zeta integral satisfies a functional equation, which we explain now. They showed that for every non-trivial additive character $\fieldCharacter \colon F \to \multiplicativegroup{\cComplex}$, there exists a function $\gamma\left(s, \pi, \fieldCharacter\right)$ (a rational function in $q^{-s}$ in the non-archimedean case) such that for every $v$, $v^{\vee}$, and $f$, we have the equality of meromorphic functions
\begin{displaymath}
Z\left(1-s, v^{\vee}, v, \fourierTransform{\fieldCharacter} f\right) = \gamma\left(s, \pi, \fieldCharacter\right) Z\left(s, v, v^{\vee}, f\right),
\end{displaymath}
where the left-hand side is the zeta function associated to the contragredient representation $\pi^{\vee}$ and $\fourierTransform{\fieldCharacter}f$ is the Fourier transform of $f$, normalized so that $\fourierTransform{\fieldCharacter^{-1}} \fourierTransform{\fieldCharacter} f = f$.

One can define a finite field version of the zeta integral of Godement--Jacquet by considering irreducible representations of $\GL_n\left(\finiteField\right)$, where $\finiteField$ is a finite field with $q$ elements. Macdonald did this in~\cite{Macdonald80} and established the functional equation for the finite field case. Roditty-Gershon also established this functional equation in her master's thesis~\cite{RodittyGershon2010}. Recently, this was also done in the modular case by Kurinczuk--Matringe--S{\'e}cherre~\cite{KurinczukMatringeSecherre2026}. In~\cite{Macdonald80, RodittyGershon2010}, the functional equation is only valid for irreducible representations that do not contain the trivial character $1 \colon \multiplicativegroup{\finiteField} \to \multiplicativegroup{\cComplex}$ in their cuspidal support. As shown by Macdonald, the gamma factor in this case is given by the Kondo Gauss sum attached to $\pi$. Recently, Kurinczuk--Matringe--S{\'e}cherre showed in~\cite[Proposition 2.5]{KurinczukMatringeSecherre2026} that the Godement--Jacquet functional equation holds for all irreducible representations if one restricts to functions supported on $\GL_n\left(\finiteField\right)$.

In this work we provide a regularized Godement--Jacquet functional equation that works for every irreducible representation $\pi$ of $\GL_n\left(\finiteField\right)$ and for every function $f \colon \squareMatrix_n\left(\finiteField\right) \to \coefficientsField$ (\Cref{thm:godement-jacquet-for-arbitrary-functions}). A hint in this direction was given in Roditty-Gershon's thesis~\cite[Theorem 3.1]{RodittyGershon2010} (see also~\cite[Lemma 4.6]{KurinczukMatringeSecherre2026}), in which she established a regularized functional equation for the finite field version of Tate's zeta integral that is valid for all characters $\chi \colon \multiplicativegroup{\finiteField} \to \multiplicativegroup{\cComplex}$ by adding an error term to the right-hand side, without changing the left-hand side.

To achieve this, we construct a function $f^{\regular} \colon \squareMatrix_n\left(\finiteField\right) \to \coefficientsField$ such that its Fourier transform agrees with that of $f$ on $\GL_n\left(\finiteField\right)$ and such that $f^{\regular}$ vanishes outside of $\GL_n\left(\finiteField\right)$. This is done using our recent results with Harman and Snowden~\cite{HarmanSnowdenZelingher2025} in which we found an explicit formula for the unit of the ideal spanned by singular elements $\coefficientsField\left[\squareMatrix_n\left(\finiteField\right)\right]_{\singular} \subset \coefficientsField\left[\squareMatrix_n\left(\finiteField\right)\right]$. The following theorem follows from \Cref{prop:properties-of-regular-function} and \Cref{thm:godement-jacquet-for-arbitrary-functions}.
\begin{theorem}
	Let $f \colon \squareMatrix_n\left(\finiteField\right) \to \coefficientsField$. Define $f^{\regular} \colon \squareMatrix_n\left(\finiteField\right) \to \coefficientsField$ by
	\begin{displaymath}
		f^{\regular}\left(X\right) = \mu\left(n\right)^{-1} \sum_{\substack{m \in \squareMatrix_n\left(\finiteField\right)\\
					m \text{ semi-idempotent}}} \mu\left(\srank m\right) f\left(Xm\right).
	\end{displaymath}
	Then $f^{\regular}$ is supported on $\GL_n\left(\finiteField\right)$ and $\fourierTransform{\fieldCharacter} f^{\regular}$ coincides with $\fourierTransform{\fieldCharacter} f$ on $\GL_n\left(\finiteField\right)$. Moreover, if the characteristic of $\coefficientsField$ does not divide $\sizeof{\GL_n\left(\finiteField\right)}$ then $f^{\regular}$ is the unique function $\squareMatrix_n\left(\finiteField\right) \to \coefficientsField$ supported on $\GL_n\left(\finiteField\right)$ whose Fourier transform coincides with that of $f$ on $\GL_n\left(\finiteField\right)$.
	\end{theorem}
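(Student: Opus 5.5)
We identify functions $f \colon \squareMatrix_n\left(\finiteField\right) \to \coefficientsField$ with elements $\sum_X f(X) X$ of the monoid algebra $A = \coefficientsField\left[\squareMatrix_n\left(\finiteField\right)\right]$, and express $f^{\regular}$ as $f$ times an explicit element of $A$. Put
\begin{displaymath}
e = \mu\left(n\right)^{-1}\sum_{m \text{ semi-idempotent}} \mu\left(\srank m\right) m .
\end{displaymath}
Then $f^{\regular}(X) = \sum_m c_m f(Xm)$ is a right translate of $f$ by the coefficients of $e$. By the results of~\cite{HarmanSnowdenZelingher2025} recalled earlier in the paper, $1-e$, or a suitable transpose/inverse of it, is the unit of the ideal $A_{\singular}$ spanned by singular matrices, so $e$ is the complementary central idempotent. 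I would first match conventions, checking whether the sum involves $m$ or $m^{-1}$-type translates and whether right or left translation is the relevant one, so that $f^{\regular}$ is literally $f$ convolved with the idempotent $e$ on the correct side.

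\emph{Support.} The key property is that $e \cdot Y = 0$ (or $Y \cdot e = 0$, on the appropriate side) for every singular $Y$, because $1-e$ acts as the identity on $A_{\singular}$. I would then rewrite $f^{\regular}(X)$ as a pairing of $f$ against $X$ multiplied by $e$. For singular $X$, the element $X$ lies in $A_{\singular}$ and is killed by $e$, so $f^{\regular}(X) = 0$. If this bookkeeping is awkward, a direct alternative is to fix a singular $X$ and group the semi-idempotents $m$ according to the product $Xm$. The vanishing of the resulting M\"obius-weighted sums is exactly the content of the unit formula.

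\emph{Fourier transform.} Recall $\fourierTransform{\fieldCharacter} f(Y) = q^{-n^2/2}\sum_X f(X)\fieldCharacter(\trace XY)$. Substituting the definition gives
\begin{displaymath}
\fourierTransform{\fieldCharacter} f^{\regular}(Y) = \mu(n)^{-1}\sum_m \mu(\srank m)\, q^{-n^2/2}\sum_X f(Xm)\fieldCharacter(\trace XY).
\end{displaymath}
For $m$ invertible (only $m = I_n$ among semi-idempotents, with coefficient $1$) this is $\fourierTransform{\fieldCharacter} f(Y)$. For singular $m$, the inner sum is $\sum_Z f(Z) \sum_{X : Xm = Z}\fieldCharacter(\trace XY)$. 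The set $\{X : Xm = Z\}$ is either empty or a coset of the nonzero subspace $\{X : Xm = 0\}$. When $Y$ is invertible, $X \mapsto \trace(XY)$ is a nonzero linear functional on that subspace, so the character sum vanishes. Hence all singular $m$ contribute zero on $\GL_n\left(\finiteField\right)$. This step is elementary and does not even use the unit formula.

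\emph{Uniqueness.} Suppose the characteristic of $\coefficientsField$ does not divide $\sizeof{\GL_n\left(\finiteField\right)}$, so that $q$ is invertible and the Fourier transform is invertible. If $g$ is supported on $\GL_n\left(\finiteField\right)$ and $\fourierTransform{\fieldCharacter} g$ vanishes on $\GL_n\left(\finiteField\right)$, I would show $g = 0$. Since $\fourierTransform{\fieldCharacter} g$ is supported on singular matrices, apply the same kernel argument to the dual side: the Fourier transform of a function supported on singular matrices, restricted to invertible matrices, is controlled by $e$. A cleaner route is a dimension count. The map $g \mapsto \fourierTransform{\fieldCharacter} g|_{\GL_n}$ sends $\coefficientsField^{\GL_n}$ to $\coefficientsField^{\GL_n}$, and the existence part shows it is surjective: every function on $\GL_n$ is of the form $\fourierTransform{\fieldCharacter} f|_{\GL_n}$ since the Fourier transform is bijective, and $f^{\regular}$ realizes it. Hence the map is injective.

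The main obstacle is the support step: aligning the normalizations of the HSZ unit formula (side of multiplication, transpose, the scalar $\mu(n)^{-1}$) with the specific translate $f(Xm)$. I would verify it first for $n=1$, where the semi-idempotents are $0$ and $1$, before trusting the general identification.
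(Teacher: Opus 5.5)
Your proof is correct. It matches the paper on the support claim and takes a different route for the Fourier claim and for uniqueness.

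\emph{Support.} On the bookkeeping you flagged: the only invertible semi-idempotent is $\IdentityMatrix{n}$, with coefficient $\mu(n)^{-1}\mu(n)=1$. So your $e$ is exactly $1-\eta_{\singular}$, with no transpose or inverse. Since $\eta_{\singular}$ is central, $[X]e=e[X]=0$ for singular $X$. Your argument $f^{\regular}(X)=\innerproduct{f}{[X]e}=0$ is then literally the paper's argument for singular $X$ in the proof of \Cref{thm:godement-jacquet-for-arbitrary-functions}.

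\emph{Fourier transform.} The paper first defines $f^{\regular}$ as $\fourierTransform{\fieldCharacter^{-1}}\bigl((1-\eta_{\singular})\fourierTransform{\fieldCharacter}f\bigr)$, for which agreement of Fourier transforms on $\GL_n(\finiteField)$ is automatic. It then identifies this with the explicit formula on $\GL_n(\finiteField)$ by a character-sum computation for indicator functions. You keep the explicit formula and show directly that, for singular $m$, the Fourier transform of $X\mapsto f(Xm)$ vanishes on $\GL_n(\finiteField)$, by summing over cosets of the nonzero subspace $K=\{X : Xm=0\}$. This skips the identification step and shows the unit formula is needed only for the support claim. Your assertion that $X\mapsto\trace(XY)$ is nonzero on $K$ for invertible $Y$ deserves its one line: take $X=uv$ with $v\neq 0$ a row vector satisfying $vm=0$. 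Then $\trace(XY)=vYu$, which is nonzero for suitable $u$ since $vY\neq 0$.

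\emph{Uniqueness.} The paper invokes \Cref{cor:fourier-transform-is-zero-on-gln-implies-f-is-zero}, which rests on the functional equation for functions supported on $\GL_n(\finiteField)$, the non-vanishing of Kondo's Gauss sum, and semisimplicity. Your dimension count needs none of that: existence makes $g\mapsto\fourierTransform{\fieldCharacter}g\restriction_{\GL_n(\finiteField)}$ a surjective, hence bijective, linear endomorphism of a $\sizeof{\GL_n(\finiteField)}$-dimensional space. In fact your count never uses the hypothesis on the characteristic. Invertibility of $q$ is already a standing assumption, not a consequence of that hypothesis. So your argument proves uniqueness whenever the characteristic of $\coefficientsField$ does not divide $q$. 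Your vaguer first sketch for uniqueness can simply be dropped.

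What the paper's route buys is a derivation rather than a verification. It explains where the formula comes from, as the Fourier conjugate of multiplication by the Kov\'acs idempotent $1-\eta_{\singular}$, whence also $(f^{\regular})^{\regular}=f^{\regular}$.
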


	As a corollary, by invoking the Godement--Jacquet functional equation for functions supported on $\GL_n\left(\finiteField\right)$ (see~\cite[Proposition 2.5]{KurinczukMatringeSecherre2026} and \Cref{thm:functional-equation-first-version}), we obtain the following regularized Godement--Jacquet functional equation.
	\begin{theorem}
		For every irreducible representation $\pi$ of $\GL_n\left(\finiteField\right)$ and every $f \colon \squareMatrix_n\left(\finiteField\right) \to \coefficientsField$, the following equation holds: 
	\begin{equation*}
		\begin{split}
			\sum_{g \in \GL_n\left(\finiteField\right)} \fourierTransform{\fieldCharacter}f\left(g^{-1}\right) \pi\left(g\right) =& \GaussSumScalar{\pi}{\fieldCharacter} \sum_{g \in \GL_n\left(\finiteField\right)} f\left(g\right) \pi\left(g\right) \\
			& + \mu\left(n\right)^{-1} \GaussSumScalar{\pi}{\fieldCharacter} \sum_{g \in \GL_n\left(\finiteField\right)} \sum_{\substack{m \in \squareMatrix_n\left(\finiteField\right)\\
					m \text{ singular}\\
					m \text{ semi-idempotent}}} \mu\left(\srank m\right) f\left(gm\right) \pi\left(g\right).
		\end{split}
	\end{equation*}
	Here $\GaussSumScalar{\pi}{\fieldCharacter}$ is Kondo's Gauss sum (see \Cref{subsec:kondo-gauss-sum}).
\end{theorem}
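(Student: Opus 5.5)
The plan is to deduce the regularized equation from the first theorem combined with the Godement--Jacquet functional equation for functions supported on $\GL_n\left(\finiteField\right)$ (\Cref{thm:functional-equation-first-version}, cf.~\cite[Proposition 2.5]{KurinczukMatringeSecherre2026}). That result states: if $h \colon \squareMatrix_n\left(\finiteField\right) \to \coefficientsField$ vanishes off $\GL_n\left(\finiteField\right)$, then
\begin{displaymath}
\sum_{g \in \GL_n\left(\finiteField\right)} \fourierTransform{\fieldCharacter}h\left(g^{-1}\right) \pi\left(g\right) = \GaussSumScalar{\pi}{\fieldCharacter} \sum_{g \in \GL_n\left(\finiteField\right)} h\left(g\right) \pi\left(g\right).
\end{displaymath}
I will apply this with $h = f^{\regular}$.

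\textbf{Step 1.} By the first theorem, $f^{\regular}$ is supported on $\GL_n\left(\finiteField\right)$, so the functional equation applies to it. Moreover, $\fourierTransform{\fieldCharacter} f^{\regular}$ agrees with $\fourierTransform{\fieldCharacter} f$ on $\GL_n\left(\finiteField\right)$. Since the left-hand side only evaluates the Fourier transform at $g^{-1} \in \GL_n\left(\finiteField\right)$, we may replace $\fourierTransform{\fieldCharacter} f^{\regular}$ by $\fourierTransform{\fieldCharacter} f$ there. This produces exactly the left-hand side of the claimed identity, and the right-hand side becomes $\GaussSumScalar{\pi}{\fieldCharacter}\sum_{g} f^{\regular}(g)\pi(g)$.

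\textbf{Step 2.} Next I expand $f^{\regular}(g)$ for invertible $g$ using its defining formula, splitting the sum over semi-idempotents $m$ into invertible and singular ones. The key point is that the only invertible semi-idempotent is the identity $I_n$. Indeed, a semi-idempotent is presumably an $m$ that is idempotent up to a unipotent or nilpotent correction, e.g.\ $m^2$ has the same image as $m$ with $m$ unipotent on its image. An invertible such element of stable rank $n$ should be forced to be $I_n$. I will check this against the definition in~\cite{HarmanSnowdenZelingher2025}. It is consistent with the normalization $\mu(n)^{-1}$, since then the contribution of $m = I_n$ is $\mu(n)^{-1}\mu(\srank I_n) f(g) = f(g)$.

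\textbf{Step 3.} With that, $f^{\regular}(g) = f(g) + \mu(n)^{-1}\sum_{m \text{ singular semi-idempotent}} \mu(\srank m) f(gm)$. Substituting this into Step 1 and distributing $\GaussSumScalar{\pi}{\fieldCharacter}$ gives the stated formula.

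The only potential obstacle is the identification in Step 2 of the invertible semi-idempotents with $\{I_n\}$ (and $\srank I_n = n$). If the definition instead allowed invertible unipotent matrices, I would need the weights $\mu(\srank m)$ to vanish or cancel for $m \neq I_n$ invertible. In that case I would go back to the formula of the first theorem and verify the coefficient of $f(g)$ directly.
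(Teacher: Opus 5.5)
Your proposal is correct and follows the paper's route: apply \Cref{thm:functional-equation-first-version} to $f^{\regular}$, replace $\fourierTransform{\fieldCharacter} f^{\regular}$ by $\fourierTransform{\fieldCharacter} f$ on $\GL_n\left(\finiteField\right)$, and split off the term $m = \IdentityMatrix{n}$ from the explicit formula for $f^{\regular}$. The point you hedge on in Step 2 follows directly from the paper's definition (semi-idempotent means $m^{N+1} = m^N$ for some $N \ge 0$): cancelling $m^N$ forces an invertible semi-idempotent to equal $\IdentityMatrix{n}$, so non-trivial unipotents never occur, and $\srank \IdentityMatrix{n} = n$ gives the coefficient $\mu\left(n\right)^{-1}\mu\left(n\right) = 1$.
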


Next, we are able to extend this result to a Godement--Jacquet functional equation valid for all irreducible representations of the monoid algebra $\coefficientsField\left[\squareMatrix_n\left(\finiteField\right)\right]$. See \Cref{subsec:functional-equation-for-monoid-algebras} and \Cref{thm:godement-jacquet-for-monoid-algebras}.

As an application of these functional equations, we find an explicit formula for primitive central idempotents corresponding to irreducible representations of $\coefficientsField\left[\squareMatrix_n\left(\finiteField\right)\right]$ where $\coefficientsField$ has characteristic not dividing $\sizeof{\GL_n\left(\finiteField\right)}$ (\Cref{thm:ln-sigma-idempotent-in-terms-of-parabolic-induction-and-gauss-sums}). To our knowledge, such an explicit formula was not previously known in the literature.
\begin{theorem}	
	Assume that the characteristic of $\coefficientsField$ does not divide $\sizeof{\GL_n\left(\finiteField\right)}$. Let $\sigma$ be an irreducible representation of $\GL_r\left(\finiteField\right)$ for $0 \le r \le n$. Then the primitive central idempotent $e_{L_n\left(\sigma\right)} \in \coefficientsField\left[\squareMatrix_n\left(\finiteField\right)\right]$ corresponding to the simple module $L_n\left(\sigma\right)$ of $\coefficientsField\left[\squareMatrix_n\left(\finiteField\right)\right]$ is given by
	\begin{displaymath}
	e_{L_n\left(\sigma\right)} = \frac{\dim L_n\left(\sigma\right)}{\sizeof{\GL_n\left(\finiteField\right)}} \sizeof{\GL_{n-r}\left(\finiteField\right)} \cdot \GaussSumScalar{\sigma^{\vee}}{\fieldCharacter}^{-1} \sum_{A \in \squareMatrix_n\left(\finiteField\right)} \left(\GaussSumScalarFunction{\sigma^{\vee}}{\fieldCharacter} \odot \delta_{0_{n-r}}\right) \left(A\right) \left[A\right],
	\end{displaymath}
	where
	\begin{itemize}
		\item $L_n\left(\sigma\right)$ is the irreducible representation of $\coefficientsField\left[\squareMatrix_n\left(\finiteField\right)\right]$ induced from $\sigma$~\cite[Corollary 5.46]{Steinberg2016}, see \Cref{subsec:representations-of-m-n-f-q}.
		\item $\GaussSumScalarFunction{\sigma^{\vee}}{\fieldCharacter}$ is the degenerate non-abelian Gauss sum function associated to $\sigma^{\vee}$ and $\fieldCharacter$~\cite{GorodetskyZelingher2026}, see \Cref{subsubsec:degenerate-non-abelian-gauss-sums, subsubsec:explicit-formulas-for-gauss-sums}.
		\item $\odot$ is parabolic induction, see \Cref{subsubsec:rewrite-using-parabolic-induction}.
		\item $\delta_{0_{n-r}}$ is the indicator function of $0_{n-r} \in \squareMatrix_{n-r}\left(\finiteField\right)$.
	\end{itemize}
\end{theorem}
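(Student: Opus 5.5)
We compute the character of $\coefficientsField\left[\squareMatrix_n\left(\finiteField\right)\right]$ on the proposed element and compare it with the known description of $e_{L_n\left(\sigma\right)}$. Since $\operatorname{char}\coefficientsField \nmid \sizeof{\GL_n\left(\finiteField\right)}$, the monoid algebra is semisimple. Its simple modules are the $L_n\left(\tau\right)$ for $\tau$ an irreducible representation of $\GL_k\left(\finiteField\right)$, $0 \le k \le n$. It therefore suffices to show that the right-hand side $E$ acts on $L_n\left(\tau\right)$ as the identity when $\tau \cong \sigma$ (so $k=r$), and as zero otherwise.

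\textbf{Step 1: reduce the action of $E$ to a Godement--Jacquet sum.} Write $E = c\sum_A F(A)[A]$ with $F = \GaussSumScalarFunction{\sigma^{\vee}}{\fieldCharacter}\odot\delta_{0_{n-r}}$. The first point to establish is that $F$, up to the normalization $\GaussSumScalar{\sigma^{\vee}}{\fieldCharacter}^{-1}$, is the Fourier transform of a function adapted to $\sigma$. The natural candidate is the matrix-coefficient type function
\begin{displaymath}
h(X)=\sum_{g\in \GL_r(\finiteField)}\trace\sigma(g)\,\fieldCharacter(\trace(\cdot))\ \text{pulled back along the corner embedding}.
\end{displaymath}
Equivalently, the degenerate non-abelian Gauss sum function is $\GaussSumScalarFunction{\sigma^{\vee}}{\fieldCharacter}(x)=\sum_g \fieldCharacter(\trace(gx))\sigma^{\vee}(g)$ traced, and parabolic induction by $\delta_{0_{n-r}}$ corresponds under $\fourierTransform{\fieldCharacter}$ to extending by zero in the complementary block. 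We would verify this compatibility of $\odot$ with $\fourierTransform{\fieldCharacter}$ directly from the definitions of \Cref{subsubsec:rewrite-using-parabolic-induction}.

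Granting this, the action of $E$ on any representation is a sum of the shape $\sum_A \fourierTransform{\fieldCharacter}f(A)\pi(A)$. This is exactly the left-hand side of the monoid-algebra functional equation (\Cref{thm:godement-jacquet-for-monoid-algebras}), with $f$ essentially $\trace\sigma^{\vee}$ on the $r\times r$ corner block.

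\textbf{Step 2: apply the functional equation.} Applying \Cref{thm:godement-jacquet-for-monoid-algebras} turns the action on $L_n(\tau)$ into $\GaussSumScalar{\tau}{\fieldCharacter}$, or the Gauss sum of the corresponding $\GL_k$-representation, times a sum of $f$ against $\tau$. Since $f$ is a class function built from $\sigma^{\vee}$ and supported on rank-$r$ matrices, the sum over matrices of rank $k$ is governed by the action through $\GL_k$. Schur orthogonality on $\GL_r\left(\finiteField\right)$ then gives zero unless $k=r$ and $\tau\cong\sigma$. In that case the result is a scalar, namely a Gauss sum times $\sizeof{\GL_r}/\dim\sigma$, times the number of rank-$r$ idempotent-type decompositions. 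The factors $\sizeof{\GL_{n-r}}$ and $\dim L_n(\sigma)/\sizeof{\GL_n}$ should exactly cancel this scalar. To see this we use $\dim L_n(\sigma)=\dim\sigma\cdot\#\{\text{rank } r \text{ flags}\}$, which comes from the model of $L_n(\sigma)$ recalled in \Cref{subsec:representations-of-m-n-f-q}. We would also use $\GaussSumScalar{\sigma}{\fieldCharacter}\GaussSumScalar{\sigma^{\vee}}{\fieldCharacter}=\pm q^{r^2}$-type identities if needed.

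\textbf{Main obstacle.} The hard step is Step 2 for $\tau$ of size $k\neq r$, in particular $k>r$. There, $\pi(A)$ for singular $A$ does not factor through a single $\GL$, and the regularization term in the functional equation contributes. I would handle this by decomposing the sum over $A$ by rank and image/kernel. Restricted to $L_n(\tau)$, the operator $[A]$ vanishes unless $\rank A\ge k$, and on rank-exactly-$k$ matrices it is given through $\tau$. Nonvanishing therefore forces $k\le r$, since $f$ lives on rank-$r$ matrices after the inverse transform. Combined with the orthogonality argument on the $\GL_k$-part, this shows $k<r$ also gives zero. As a cross-check, we would verify that $\sum_\sigma$ of the right-hand sides equals $1$, via Fourier inversion of $\delta_{I_n}$.
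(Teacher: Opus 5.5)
The reduction itself is sound: it suffices to show that the right-hand side $E$ acts by $1$ on $L_n(\sigma)$ and by $0$ on every other $L_n(\tau)$. But the step that carries the content is missing. For $\tau\cong\sigma$ you assert that the outcome ``is a scalar'', and nothing in your argument gives this. The coefficient function $F=\GaussSumScalarFunction{\sigma^{\vee}}{\fieldCharacter}\odot\delta_{0_{n-r}}$ is only invariant under $\GL_n(\finiteField)$-conjugation. So $E$ is only known to commute with the $[g]$, $g\in\GL_n(\finiteField)$; it is not known a priori to be central. Moreover, $L_n(\sigma)$ restricted to $\GL_n(\finiteField)$ is the generally reducible induced representation $\sigma\odot 1_{\GL_{n-r}}$, so Schur's lemma gives nothing.

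After the functional equation, the operator to control is the dual of a multiple of $\sum_X \trace L_n(\sigma)(X)\, L_n(\tau^{\vee})^{\natural}(X)$. For $k=r$ only the matrices $X=g\circ p_{W,K}$ contribute. Here $W=\ImageOf X$, $K=\ker X$ is a complement of $W$, $p_{W,K}$ is the projection onto $W$ along $K$, and $g\in\GL(W)$. Schur orthogonality in $g$ kills $\tau\not\cong\sigma$. For $\tau\cong\sigma$, however, the block from $b_V$ to $b_W$ is proportional to $\sum_K (\sigma^{\vee})^{\natural}(\phi_W\circ p_{W,K}\restriction_V\circ\phi_V^{-1})$, which evaluates the regularized extension at \emph{singular} matrices when $V\neq W$. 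You must show these off-diagonal blocks vanish. They do: write $K$ as the graph of $T\in\Hom(K_0,W)$ for a fixed complement $K_0$, so that $p_{W,K}=p_{W,K_0}-T\circ p_{K_0,W}$; the sum over $T$ of the characters in \eqref{eq:expression-of-pi-natural-with-gauss-sums} vanishes unless $V\subset W$. You must also show the diagonal blocks equal $q^{r(n-r)}\sizeof{\GL_r(\finiteField)}/\dim\sigma$, which cancels against the constants by \eqref{eq:dimension-of-l-n-sigma}. Your ``cross-check'' $\sum_\sigma E_\sigma=1$ would also close the gap once vanishing on the other $L_n(\tau)$ is known, but it is a nontrivial identity that you do not prove.

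Step 1 also misidentifies the function. Since $\fourierTransform{\fieldCharacter}\delta_{0_{n-r}}$ is constant, $F=q^{-\frac{(n-r)^2}{2}}\fourierTransform{\fieldCharacter}(\chi_\sigma\odot 1_{\squareMatrix_{n-r}})=q^{-\frac{(n-r)^2}{2}}\fourierTransform{\fieldCharacter}(\trace L_n(\sigma))$, where $\chi_\sigma$ is the character of $\sigma$ (not $\sigma^{\vee}$) extended by zero. This is the character of $L_n(\sigma)$, supported on matrices of rank $\ge r$; it is not a corner-block function on rank-$r$ matrices. This special case can be checked directly by summing over the free block, as in the proof of \Cref{thm:godement-jacquet-for-monoid-algebras}.

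With the correct supports, the cases $k\neq r$ are easy, so your ``main obstacle'' is misplaced. For $k>r$ no functional equation is needed: $F$ is supported on rank $\le r$ (because $\ImageOf A\subset V$), while $[A]$ kills $L_n(\tau)$ when $\rank A<k$. For $k<r$, the functional equation pairs $\trace L_n(\sigma)$ (rank $\ge r$) against $L_n(\tau^{\vee})^{\natural}$ (rank $\le k$); no ``orthogonality on the $\GL_k$-part'' is involved.

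For comparison, the paper never verifies any orthogonality. It transports the known idempotent $\diag(e_\sigma,\dots,e_\sigma)$ back through the explicit inverse of $\Psi$ (the elements $\eta_V$) and computes the coefficient of $[A]$. It evaluates the resulting sum over semi-idempotents using the regularized $\GL_r$ functional equation applied to $\delta_{A\restriction_V}$. The remaining sum over $V\supseteq\ImageOf A$ is exactly $\odot\,\delta_{0_{n-r}}$.
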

One interesting feature of this formula is that the coefficients are given by degenerate non-abelian Gauss sums, objects arising from harmonic analysis. In an upcoming work with Gorodetsky~\cite{GorodetskyZelingher2026} we compute these degenerate non-abelian Gauss sums explicitly. This allows one to write down explicit formulas for the central primitive idempotent $e_{L_n\left(\sigma\right)}$. We review the main results of~\cite{GorodetskyZelingher2026} in \Cref{subsubsec:explicit-formulas-for-gauss-sums}.

As a corollary of these results, we obtain \Cref{cor:trace-godement-jacquet-for-m-n-fq}, which allows us to compute the pairing of the primitive central idempotent $e_{L_n\left(\sigma\right)}$ with an arbitrary function $f \colon \squareMatrix_n\left(\finiteField\right) \to \coefficientsField$, in terms of the pairing of the character of $L_n\left(\sigma\right)$ and the Fourier transform of $f$. In fact, this identity was the motivation for this present work. In more detail, in the upcoming work~\cite{HarmanSnowdenZelingher2026}, together with Harman and Snowden, we study interesting tensor categories called \emph{Jacobi categories} (see also~\cite[Section 7.2]{Snowden2026} and~\cite[Section 6.5]{EtingofSnowden2026}). The identity in \Cref{cor:trace-godement-jacquet-for-m-n-fq} allows us to compute the formal dimensions of the simple objects in these Jacobi categories.

\subsection*{Acknowledgements}

I would like to thank Andrew Snowden for introducing me to the representation theory of the monoid algebra $\coefficientsField\left[\squareMatrix_n\left(\finiteField_q\right)\right]$ and for many discussions on this topic. I would like to thank Stephen DeBacker and Ofir Gorodetsky for their interest in this project and their encouragement.

\subsection*{AI Acknowledgements}

ChatGPT Pro models 5.5 and 5.6 were used for proofreading the manuscript. In addition, the author benefited from discussions with these models. The models suggested an early version of the statement in \Cref{thm:godement-jacquet-for-monoid-algebras} and a proof for it. The models also pointed out the formula $f_{\sigma} = \diag\left(e_{\sigma},\dots,e_{\sigma}\right)$ in \Cref{subsubsec:central-primitive-idempotents} and suggested the idea for the proof of \Cref{thm:inverse-image-of-Psi}. The manuscript was entirely written by the author and he takes full responsibility for its content.

\section{Godement--Jacquet functional equation}

Let $\finiteField$ be a finite field with $q$ elements. Let $\coefficientsField$ be an algebraically closed field with characteristic not dividing $q$. We fix a square root of $q$ in $\coefficientsField$, denoted by $\sqrt{q}$. For every $m \in \zIntegers$ we define $q^{\frac{m}{2}} = \sqrt{q}^m$ and $\sqrt{q^m} = \sqrt{q}^m$.

\subsection{Monoid algebra}

Let $\coefficientsField\left[\squareMatrix_n\left(\finiteField\right)\right]$ be the monoid algebra of $\squareMatrix_n\left(\finiteField\right)$. It consists of $\coefficientsField$-linear combinations of elements $\left[X\right]$ where $X \in \squareMatrix_n\left(\finiteField\right)$. Multiplication is defined on basis elements by the rule $\left[X_1\right] \left[X_2\right] = \left[X_1 X_2\right]$ for every $X_1, X_2 \in \squareMatrix_n\left(\finiteField\right)$, and extended bilinearly to all elements.

We have a left $\GL_n\left(\finiteField\right) \times \GL_n\left(\finiteField\right)$ action on $\coefficientsField\left[\squareMatrix_n\left(\finiteField\right)\right]$ defined by the rule $\lambda\left(g,h\right)\left[X\right] = \left[h X g^{-1}\right]$ for every $X \in \squareMatrix_n\left(\finiteField\right)$ and $g,h \in \GL_n\left(\finiteField\right)$.

Let
\begin{displaymath}
 \Schwartz\left(\squareMatrix_n\left(\finiteField\right)\right) = \left\{ f \colon \squareMatrix_n\left(\finiteField\right) \to \coefficientsField \right\}.
 \end{displaymath}
   We have a left $\GL_n\left(\finiteField\right) \times \GL_n\left(\finiteField\right)$ action on $\Schwartz\left(\squareMatrix_n\left(\finiteField\right)\right)$ via
   \begin{displaymath}
   \left(\rho\left(g,h\right) f\right)\left(X\right) = f\left(h^{-1} X g\right),
   \end{displaymath}
where $g,h \in \GL_n\left(\finiteField\right)$.

We may identify $\coefficientsField\left[\squareMatrix_n\left(\finiteField\right)\right]$ with $\Schwartz\left(\squareMatrix_n\left(\finiteField\right)\right)$ via the map sending an element $f \in \Schwartz\left(\squareMatrix_n\left(\finiteField\right)\right)$ to the linear combination
\begin{displaymath}
\algebraIsomorphism\left(f\right) = \sum_{Y \in \squareMatrix_n\left(\finiteField\right)} f\left(Y\right) \left[Y\right].
\end{displaymath}
This map is an isomorphism of left $\coefficientsField\left[\GL_n\left(\finiteField\right) \times \GL_n\left(\finiteField\right)\right]$-modules.

\subsubsection{Duality}
We may regard each of $\coefficientsField\left[\squareMatrix_n\left(\finiteField\right)\right]$ and $\Schwartz\left(\squareMatrix_n\left(\finiteField\right)\right)$ as $\coefficientsField\left[\squareMatrix_n\left(\finiteField\right) \times \squareMatrix_n\left(\finiteField\right)\right]$-modules as follows:
\begin{itemize}
	\item On $\coefficientsField\left[\squareMatrix_n\left(\finiteField\right)\right]$ the action is defined on the generators by
	\begin{displaymath}
		\lambda'\left(\left[A\right],\left[B\right]\right)\left[X\right] = \left[B \cdot X \cdot \transpose{A}\right],
	\end{displaymath}
	for every $X,A,B \in \squareMatrix_n\left(\finiteField\right)$.
	\item On $\Schwartz\left(\squareMatrix_n\left(\finiteField\right)\right)$ the action is defined via
	\begin{displaymath}
		\left(\rho'\left(\left[A\right],\left[B\right]\right)f\right)\left(X\right) = f\left(\transpose{B} \cdot X \cdot A\right)
	\end{displaymath}
	for every $f \in \Schwartz\left(\squareMatrix_n\left(\finiteField\right)\right)$ and $X,A,B \in \squareMatrix_n\left(\finiteField\right)$.
\end{itemize}
Notice that the restriction of these actions to $\coefficientsField\left[\GL_n\left(\finiteField\right) \times \GL_n\left(\finiteField\right)\right]$ coincides with the actions we defined above, up to a twist with the outer automorphism of $\GL_n\left(\finiteField\right)$ given by $g \mapsto g^{\iota} \coloneq \transpose{g}^{-1}$. Precisely, for $A, B \in \GL_n\left(\finiteField\right)$ we have $\rho'\left(\left[A\right],\left[B\right]\right) = \rho\left(A,B^{\iota}\right)$ and $\lambda'\left(\left[A\right],\left[B\right]\right) = \lambda\left(A^{\iota},B\right)$.

Given a $\coefficientsField\left[\squareMatrix_n\left(\finiteField\right)\right]$-module $M$, we define a dual $\coefficientsField\left[\squareMatrix_n\left(\finiteField\right)\right]$-module structure on $\Hom_{\coefficientsField}\left(M, \coefficientsField\right)$ by setting \begin{displaymath}
	\innerproduct{m}{\left[X\right] \cdot m^{\ast}} = \innerproduct{\left[\transpose{X}\right] \cdot m}{m^{\ast}},
\end{displaymath} for every $m \in M$ and $m^{\ast} \in \Hom_{\coefficientsField}\left(M, \coefficientsField\right)$ and $X \in \squareMatrix_n\left(\finiteField\right)$. We let $\transpose{M}$ denote this module. Notice that the usual dual of the restriction of $M$ to $\GL_n\left(\finiteField\right)$ can be realized as the restriction of $\transpose{M}$ to $\GL_n\left(\finiteField\right)$ composed with the outer automorphism $g \mapsto g^{\iota}$ of $\GL_n\left(\finiteField\right)$.

We have that the $\coefficientsField\left[\squareMatrix_n\left(\finiteField\right) \times \squareMatrix_n\left(\finiteField\right)\right] = \coefficientsField\left[\squareMatrix_n\left(\finiteField\right)\right] \otimes_{\coefficientsField} \coefficientsField\left[\squareMatrix_n\left(\finiteField\right)\right]$-modules $\Schwartz\left(\squareMatrix_n\left(\finiteField\right)\right)$ and $\transpose{\coefficientsField\left[\squareMatrix_n\left(\finiteField\right)\right]}$ are isomorphic. This can be easily seen using the bilinear form $B \colon \coefficientsField\left[\squareMatrix_n\left(\finiteField\right)\right] \otimes_{\coefficientsField} \Schwartz\left(\squareMatrix_n\left(\finiteField\right)\right) \to \coefficientsField$, given by
\begin{displaymath}
	B\left(\algebraIsomorphism\left(f_1\right), f_2\right) = \sum_{X \in \squareMatrix_n\left(\finiteField\right)} f_1\left(X\right) f_2\left(X\right),
\end{displaymath}
where $f_1, f_2 \in \Schwartz\left(\squareMatrix_n\left(\finiteField\right)\right)$. One easily checks that for every $A,B \in \squareMatrix_n\left(\finiteField\right)$ we have
\begin{displaymath}
	B\left(\lambda'\left(\left[B\right], \left[A\right]\right)\algebraIsomorphism\left(f_1\right), f_2\right) = B\left(\algebraIsomorphism\left(f_1\right), \rho'\left(\left[\transpose{B}\right], \left[\transpose{A}\right]\right)f_2\right).
\end{displaymath}

\subsection{Fourier transform}
Let $\fieldCharacter \colon \finiteField \to \multiplicativegroup{\coefficientsField}$ be a non-trivial additive character.

Given an element $f \in \Schwartz\left(\squareMatrix_n\left(\finiteField\right)\right)$ we define its \emph{Fourier transform} $\fourierTransform{\fieldCharacter} f \in \Schwartz\left(\squareMatrix_n\left(\finiteField\right)\right)$ by the formula
\begin{displaymath}
\fourierTransform{\fieldCharacter}{f}\left(X\right) = q^{-\frac{n^2}{2}} \sum_{Y \in \squareMatrix_n\left(\finiteField\right)} f\left(Y\right) \fieldCharacter\left(\trace\left(XY\right)\right),
\end{displaymath}
where $X \in \squareMatrix_n\left(\finiteField\right)$.

The Fourier inversion formula takes the form
\begin{displaymath}
\fourierTransform{\fieldCharacter^{-1}} \fourierTransform{\fieldCharacter} f\left(X\right) = f\left(X\right),
\end{displaymath}
or alternatively
\begin{displaymath}
\fourierTransform{\fieldCharacter} \fourierTransform{\fieldCharacter} f\left(X\right) = f\left(-X\right).
\end{displaymath}

Using the monoid algebra $\coefficientsField\left[\squareMatrix_n\left(\finiteField\right)\right]$, we have the following alternative description of the Fourier transform. Let $\evaluation_{\fieldCharacter} \colon \coefficientsField\left[\squareMatrix_n\left(\finiteField\right)\right] \to \coefficientsField$ be the linear map defined on basis elements by $\evaluation_{\fieldCharacter}\left(\left[X\right]\right) = q^{-\frac{n^2}{2}} \fieldCharacter\left(\trace X\right)$. Then
\begin{displaymath}
\fourierTransform{\fieldCharacter}f\left(X\right) = \evaluation_{\fieldCharacter}\left(\left[X\right] \cdot \algebraIsomorphism\left(f\right)\right).
\end{displaymath}

\subsection{Kondo Gauss sum}\label{subsec:kondo-gauss-sum}
We have that the map $\GL_n\left(\finiteField\right) \to \coefficientsField$ given by $g \mapsto \fieldCharacter\left(\trace g^{-1}\right)$ is a class function. Thus if $\pi$ is a representation of $\GL_n\left(\finiteField\right)$, the non-abelian Gauss sum $\GaussSum{\pi}{\fieldCharacter}$, defined as the operator
\begin{displaymath}
\GaussSum{\pi}{\fieldCharacter} = q^{-\frac{n^2}{2}} \sum_{g \in \GL_n\left(\finiteField\right)} \fieldCharacter\left(\trace g^{-1}\right)\pi\left(g\right),
\end{displaymath}
 lies in $\Hom_{\GL_n\left(\finiteField\right)}\left(\pi, \pi\right)$. Suppose that $\pi$ is irreducible. Then by Schur's lemma the latter Hom-space is one-dimensional and therefore there exists a scalar $\GaussSumScalar{\pi}{\fieldCharacter} \in \coefficientsField$ such that $\GaussSum{\pi}{\fieldCharacter} = \GaussSumScalar{\pi}{\fieldCharacter} \cdot \idmap_{\pi}$.

For an irreducible representation $\pi$ of $\GL_n\left(\finiteField\right)$, the scalar $\GaussSumScalar{\pi}{\fieldCharacter}$ was explicitly computed by Kondo in~\cite{Kondo1963} for $\coefficientsField$ of characteristic zero. Kurinczuk--Matringe--S{\'e}cherre~\cite[Proposition 6.1]{KurinczukMatringeSecherre2026} extended his result to the positive characteristic setting.

\begin{remark}\label{rem:kondo-gauss-sum-is-non-zero}
	The scalar $\GaussSumScalar{\pi}{\fieldCharacter}$ is never zero. This follows for example from~\cite[Proposition 4.9]{KurinczukMatringeSecherre2026}.
\end{remark}

\subsection{Functional equation I: functions supported on $\GL_n\left(\finiteField\right)$}
We establish a first version of the Godement--Jacquet functional equation.

Let $\Schwartz\left(\GL_n\left(\finiteField\right)\right)$ be the subspace of $\Schwartz\left(\squareMatrix_n\left(\finiteField\right)\right)$ consisting of all functions $f \colon \squareMatrix_n\left(\finiteField\right) \to \coefficientsField$ such that $f\left(X\right) = 0$ for every $X \notin \GL_n\left(\finiteField\right)$.

Let
\begin{displaymath}
e^{\vee}_{\fieldCharacter} = q^{-\frac{n^2}{2}} \sum_{g \in \GL_n\left(\finiteField\right)} \fieldCharacter\left(\trace g^{-1}\right) \left[g\right] \in \coefficientsField\left[\GL_n\left(\finiteField\right)\right].
\end{displaymath}
Notice that since $g \mapsto \fieldCharacter\left(\trace g^{-1}\right)$ is a class function, we have that $e_{\fieldCharacter}^{\vee}$ lies in the center of $\coefficientsField\left[\GL_n\left(\finiteField\right)\right]$.

\begin{proposition}\label{prop:identity-in-the-group-algebra}
	For every $f \in \Schwartz\left(\GL_n\left(\finiteField\right)\right)$ we have the identity
	\begin{displaymath}
	\algebraIsomorphism\left(f\right) \cdot e^{\vee}_{\fieldCharacter} = e^{\vee}_{\fieldCharacter} \cdot \algebraIsomorphism\left(f\right) = \sum_{g \in \GL_n\left(\finiteField\right)} \fourierTransform{\fieldCharacter}f\left(g^{-1}\right) \left[g\right].
	\end{displaymath}
\end{proposition}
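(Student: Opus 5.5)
The first equality follows from centrality. As noted just before the statement, $e^{\vee}_{\fieldCharacter}$ is central in $\coefficientsField\left[\GL_n\left(\finiteField\right)\right]$, because $g \mapsto \fieldCharacter\left(\trace g^{-1}\right)$ is a class function. Since $f$ is supported on $\GL_n\left(\finiteField\right)$, the element $\algebraIsomorphism\left(f\right)$ lies in this group algebra, so it commutes with $e^{\vee}_{\fieldCharacter}$. It therefore suffices to compute one of the two products, say $e^{\vee}_{\fieldCharacter} \cdot \algebraIsomorphism\left(f\right)$, and compare coefficients.

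To compute the product, expand it as
\begin{displaymath}
e^{\vee}_{\fieldCharacter} \cdot \algebraIsomorphism\left(f\right) = q^{-\frac{n^2}{2}} \sum_{h, y \in \GL_n\left(\finiteField\right)} \fieldCharacter\left(\trace h^{-1}\right) f\left(y\right) \left[hy\right].
\end{displaymath}
Substitute $g = hy$, so that $h = g y^{-1}$ and $h^{-1} = y g^{-1}$. The coefficient of $\left[g\right]$ is then
\begin{displaymath}
q^{-\frac{n^2}{2}} \sum_{y \in \GL_n\left(\finiteField\right)} f\left(y\right) \fieldCharacter\left(\trace\left(y g^{-1}\right)\right) = q^{-\frac{n^2}{2}} \sum_{y \in \GL_n\left(\finiteField\right)} f\left(y\right) \fieldCharacter\left(\trace\left(g^{-1} y\right)\right),
\end{displaymath}
where the last step uses cyclicity of the trace.

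The final step uses the support hypothesis on $f$. Because $f$ vanishes on singular matrices, the sum over $y \in \GL_n\left(\finiteField\right)$ equals the sum over all $Y \in \squareMatrix_n\left(\finiteField\right)$. By the definition of the Fourier transform, this full sum is exactly $\fourierTransform{\fieldCharacter}f\left(g^{-1}\right)$, which gives the claimed formula. There is no real obstacle here; the only points needing care are keeping track of $h^{-1}$ in the change of variables and invoking the support of $f$ to extend the sum from $\GL_n\left(\finiteField\right)$ to $\squareMatrix_n\left(\finiteField\right)$.
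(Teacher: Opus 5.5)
Your proposal is correct and matches the paper's proof. The paper also uses centrality of $e^{\vee}_{\fieldCharacter}$ for the first equality, then makes the same change of variables in $e^{\vee}_{\fieldCharacter}\cdot\algebraIsomorphism\left(f\right)$, and uses the support of $f$ to extend the sum to all of $\squareMatrix_n\left(\finiteField\right)$, obtaining $\fourierTransform{\fieldCharacter}f\left(g^{-1}\right)$.
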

\begin{proof}
	Since $e_{\fieldCharacter}^{\vee}$ is central, it suffices to prove the second equality. We have
	\begin{displaymath}
	e^{\vee}_{\fieldCharacter} \cdot \algebraIsomorphism\left(f\right) = \sum_{g \in \GL_n\left(\finiteField\right)} \sum_{Y \in \GL_n\left(\finiteField\right)} q^{-\frac{n^2}{2}} \fieldCharacter\left(\trace\left(g^{-1}\right)\right) f\left(Y\right) \left[gY\right].
	\end{displaymath}
	Changing variables $g \mapsto g Y^{-1}$, we get
	\begin{displaymath}
	e^{\vee}_{\fieldCharacter} \cdot \algebraIsomorphism\left(f\right) = \sum_{g \in \GL_n\left(\finiteField\right)} \sum_{Y \in \GL_n\left(\finiteField\right)} q^{-\frac{n^2}{2}} \fieldCharacter\left(\trace\left(Y g^{-1}\right)\right) f\left(Y\right) \left[g\right].
	\end{displaymath}
	Since $f$ is only supported on elements of $\GL_n\left(\finiteField\right)$, this equals
	\begin{displaymath}
	e^{\vee}_{\fieldCharacter} \cdot \algebraIsomorphism\left(f\right) = \sum_{g \in \GL_n\left(\finiteField\right)} \sum_{Y \in \squareMatrix_n\left(\finiteField\right)} q^{-\frac{n^2}{2}} \fieldCharacter\left(\trace\left(Y g^{-1}\right)\right) f\left(Y\right) \left[g\right] = \sum_{g \in \GL_n\left(\finiteField\right)} \fourierTransform{\fieldCharacter}f\left(g^{-1}\right) \left[g\right].
	\end{displaymath}
\end{proof}

As a corollary, we obtain the following first version of the functional equation.
\begin{theorem}\label{thm:functional-equation-first-version}
	Let $\pi$ be an irreducible representation of $\GL_n\left(\finiteField\right)$. Then for every $f \in \Schwartz\left(\GL_n\left(\finiteField\right)\right)$ we have the equality
	\begin{displaymath}
	\sum_{g \in \GL_n\left(\finiteField\right)} \fourierTransform{\fieldCharacter} f\left(g^{-1}\right) \pi\left(g\right) = \GaussSumScalar{\pi}{\fieldCharacter} \sum_{g \in \GL_n\left(\finiteField\right)} f\left(g\right) \pi\left(g\right).
	\end{displaymath}
\end{theorem}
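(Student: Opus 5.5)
The plan is to apply the representation $\pi$, extended linearly to the group algebra $\coefficientsField\left[\GL_n\left(\finiteField\right)\right]$, to the identity of \Cref{prop:identity-in-the-group-algebra}. Since $f \in \Schwartz\left(\GL_n\left(\finiteField\right)\right)$, the element $\algebraIsomorphism\left(f\right) = \sum_{g \in \GL_n\left(\finiteField\right)} f\left(g\right)\left[g\right]$ lies in $\coefficientsField\left[\GL_n\left(\finiteField\right)\right]$. The element $e^{\vee}_{\fieldCharacter}$ lies there as well. So both sides of
\begin{displaymath}
e^{\vee}_{\fieldCharacter} \cdot \algebraIsomorphism\left(f\right) = \sum_{g \in \GL_n\left(\finiteField\right)} \fourierTransform{\fieldCharacter}f\left(g^{-1}\right) \left[g\right]
\end{displaymath}
are elements of the group algebra, and we may apply the algebra homomorphism $\pi \colon \coefficientsField\left[\GL_n\left(\finiteField\right)\right] \to \EndomorphismRing\left(\pi\right)$ to them.

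Applying $\pi$ to the right-hand side gives exactly $\sum_{g} \fourierTransform{\fieldCharacter} f\left(g^{-1}\right)\pi\left(g\right)$, which is the left-hand side of the theorem. Applying $\pi$ to the left-hand side gives $\pi\left(e^{\vee}_{\fieldCharacter}\right)\pi\left(\algebraIsomorphism\left(f\right)\right)$. By definition, $\pi\left(e^{\vee}_{\fieldCharacter}\right) = \GaussSum{\pi}{\fieldCharacter}$. By Schur's lemma, as recorded in \Cref{subsec:kondo-gauss-sum}, this operator equals $\GaussSumScalar{\pi}{\fieldCharacter}\cdot \idmap_{\pi}$. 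Also $\pi\left(\algebraIsomorphism\left(f\right)\right) = \sum_{g} f\left(g\right)\pi\left(g\right)$. Combining these yields the claimed equality.

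There is no real obstacle here. The only point to check is that every element involved is supported on $\GL_n\left(\finiteField\right)$, so that $\pi$ can be applied at all; this holds by the hypothesis $f \in \Schwartz\left(\GL_n\left(\finiteField\right)\right)$. The nontrivial input is the change of variables already carried out in \Cref{prop:identity-in-the-group-algebra}.
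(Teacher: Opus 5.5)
Your proposal is correct and matches the paper's proof: it applies the algebra homomorphism $\coefficientsField\left[\GL_n\left(\finiteField\right)\right] \to \EndomorphismRing_{\coefficientsField}\left(\pi\right)$ to the identity of \Cref{prop:identity-in-the-group-algebra}. It then uses $\pi\left(e^{\vee}_{\fieldCharacter}\right) = \GaussSum{\pi}{\fieldCharacter} = \GaussSumScalar{\pi}{\fieldCharacter} \cdot \idmap_{\pi}$.
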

\begin{proof}
	Consider the homomorphism of algebras $\coefficientsField\left[\GL_n\left(\finiteField\right)\right] \to \EndomorphismRing_{\coefficientsField}\left(\pi\right)$ sending $\left[g\right] \mapsto \pi\left(g\right)$ for every $g \in \GL_n\left(\finiteField\right)$. Applying this homomorphism to the identity in \Cref{prop:identity-in-the-group-algebra} yields the result.
\end{proof}

\begin{remark}
	This form of the functional equation was also recently established in~\cite[Proposition 2.5]{KurinczukMatringeSecherre2026}.
\end{remark}

\begin{corollary}\label{cor:fourier-transform-is-zero-on-gln-implies-f-is-zero}
	Suppose that the characteristic of $\coefficientsField$ does not divide $\sizeof{\GL_n\left(\finiteField\right)}$. Let $f \in \Schwartz\left(\GL_n\left(\finiteField\right)\right)$ be such that $\fourierTransform{\fieldCharacter}f\left(g\right) = 0$ for every $g \in \GL_n\left(\finiteField\right)$. Then $f = 0$.
\end{corollary}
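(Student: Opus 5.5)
We prove this by applying \Cref{thm:functional-equation-first-version} to every irreducible representation and then using semisimplicity of the group algebra.

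Since $\fourierTransform{\fieldCharacter}f$ vanishes on $\GL_n\left(\finiteField\right)$, the left-hand side of \Cref{thm:functional-equation-first-version} is zero for every irreducible representation $\pi$ of $\GL_n\left(\finiteField\right)$. The theorem therefore gives
\begin{displaymath}
	\GaussSumScalar{\pi}{\fieldCharacter} \sum_{g \in \GL_n\left(\finiteField\right)} f\left(g\right) \pi\left(g\right) = 0 .
\end{displaymath}
By \Cref{rem:kondo-gauss-sum-is-non-zero}, $\GaussSumScalar{\pi}{\fieldCharacter} \neq 0$. Dividing by it, we find that the element
\begin{displaymath}
	x = \sum_{g \in \GL_n\left(\finiteField\right)} f\left(g\right)\left[g\right] \in \coefficientsField\left[\GL_n\left(\finiteField\right)\right]
\end{displaymath}
acts by zero on every irreducible representation $\pi$.

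Next we use the assumption that the characteristic of $\coefficientsField$ does not divide $\sizeof{\GL_n\left(\finiteField\right)}$. By Maschke's theorem, $\coefficientsField\left[\GL_n\left(\finiteField\right)\right]$ is then semisimple. Since $\coefficientsField$ is algebraically closed, Wedderburn's theorem gives an algebra isomorphism
\begin{displaymath}
	\coefficientsField\left[\GL_n\left(\finiteField\right)\right] \cong \prod_{\pi} \EndomorphismRing_{\coefficientsField}\left(\pi\right),
\end{displaymath}
where $\pi$ runs over the irreducible representations. Alternatively, the regular representation is a direct sum of irreducibles, and $x$ acting by zero on it forces $x = x \cdot \left[\IdentityMatrix{n}\right] = 0$. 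Either way $x = 0$, so $f\left(g\right) = 0$ for all $g \in \GL_n\left(\finiteField\right)$.

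Finally, $f$ lies in $\Schwartz\left(\GL_n\left(\finiteField\right)\right)$, so it already vanishes off $\GL_n\left(\finiteField\right)$. Hence $f = 0$.

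The only genuinely non-elementary input is the non-vanishing of the Kondo Gauss sum, and this is provided by \Cref{rem:kondo-gauss-sum-is-non-zero}.
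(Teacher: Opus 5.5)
Your proof is correct and follows essentially the same route as the paper. You apply the first-version functional equation to each irreducible $\pi$, cancel the nonzero Kondo Gauss sum, and then use semisimplicity to pass to the left regular representation. The paper does this by taking $\pi(g) = \lambda(\IdentityMatrix{n}, g)$ and applying it to $[\IdentityMatrix{n}]$, which gives $\sum_{g} f(g)[g] = 0$ and hence $f = 0$.
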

\begin{proof}
	By \Cref{thm:functional-equation-first-version} and \Cref{rem:kondo-gauss-sum-is-non-zero}, we have that for every irreducible representation $\pi$ of $\GL_n\left(\finiteField\right)$,
	\begin{displaymath}
		\sum_{g \in \GL_n\left(\finiteField\right)}f\left(g\right) \pi\left(g\right) = 0.
	\end{displaymath}
	Since the characteristic of $\coefficientsField$ does not divide $\sizeof{\GL_n\left(\finiteField\right)}$, we have that every finite-dimensional representation of $\GL_n\left(\finiteField\right)$ is semisimple. It follows that one can replace $\pi$ in the equality above with any finite-dimensional representation (not necessarily irreducible) of $\GL_n\left(\finiteField\right)$. Choosing $\pi\left(g\right) = \lambda\left(\IdentityMatrix{n},g\right)$ and applying it to $\left[\IdentityMatrix{n}\right]$, it follows that
	\begin{displaymath}
		\sum_{g \in \GL_n\left(\finiteField\right)} f\left(g\right) \left[g\right] = 0,
	\end{displaymath}
	which implies that $f = 0$.
\end{proof}

\subsection{Functional equation II: general functions}
Our next goal is to improve \Cref{thm:functional-equation-first-version} to obtain a functional equation that holds for every $f \in \Schwartz\left(\squareMatrix_n\left(\finiteField\right)\right)$. One can try to achieve this by replacing $f$ with a function $f_0$ supported on $\GL_n\left(\finiteField\right)$ whose restriction to $\GL_n\left(\finiteField\right)$ agrees with $f$. However, using this naive approach it is hard to keep track of the Fourier transform $\fourierTransform{\fieldCharacter} f_0$ and its relation to the original Fourier transform $\fourierTransform{\fieldCharacter} f$.

Instead, given a function $f \in \Schwartz\left(\squareMatrix_n\left(\finiteField\right)\right)$ we will show how to construct an explicit function $f^{\regular} \in \Schwartz\left(\GL_n\left(\finiteField\right)\right)$ such that $\fourierTransform{\fieldCharacter} f^{\regular}\restriction_{\GL_n\left(\finiteField\right)} = \fourierTransform{\fieldCharacter}f\restriction_{\GL_n\left(\finiteField\right)}$. Then by \Cref{thm:functional-equation-first-version} we will obtain the functional equation
\begin{displaymath}
\sum_{g \in \GL_n\left(\finiteField\right)} \fourierTransform{\fieldCharacter} f\left(g^{-1}\right) \pi\left(g\right) = \GaussSumScalar{\pi}{\fieldCharacter} \sum_{g \in \GL_n\left(\finiteField\right)} f^{\regular}\left(g\right) \pi\left(g\right).
\end{displaymath}

\subsubsection{Monoid of singular matrices}

Let $\coefficientsField\left[\squareMatrix_n\left(\finiteField\right)\right]_{\singular} \subset \coefficientsField\left[\squareMatrix_n\left(\finiteField\right)\right]$ be the $\coefficientsField$-linear span of $\left[X\right]$, where $X$ goes over all the singular elements of $\squareMatrix_n\left(\finiteField\right)$, i.e., over all $X \in \squareMatrix_n\left(\finiteField\right)$ such that $X \notin \GL_n\left(\finiteField\right)$. Then $\coefficientsField\left[\squareMatrix_n\left(\finiteField\right)\right]_{\singular}$ is an ideal of $\coefficientsField\left[\squareMatrix_n\left(\finiteField\right)\right]$. \Kovacs{}~\cite{Kovacs1992} proved that $\coefficientsField\left[\squareMatrix_n\left(\finiteField\right)\right]_{\singular}$ has a multiplicative identity element $\eta_{\singular}$. The element $\eta_{\singular}$ is central in $\coefficientsField\left[\squareMatrix_n\left(\finiteField\right)\right]$. In~\cite[Theorem 4.1]{HarmanSnowdenZelingher2025}, together with Harman and Snowden, we proved the following explicit formula for $\eta_{\singular}$.
\begin{displaymath}
\eta_{\singular} = -\mu\left(n\right)^{-1} \sum_{\substack{m \in \squareMatrix_n\left(\finiteField\right) \\
m \text{ singular}\\
m \text{ semi-idempotent}}} \mu\left(\srank m\right) \left[m\right].
\end{displaymath}
Here $\mu\left(j\right) = \left(-1\right)^j q^{\binom{j}{2}}$ for a non-negative integer $j$. A matrix $m$ is called \emph{semi-idempotent} if there exists $N \ge 0$ such that $m^{N+1} = m^N$. In this case, we set $m^{\infty} = m^N$ and $\srank m = \rank m^{\infty}$ and call $\srank m$ the \emph{stable rank} of $m$. Equivalently, $m$ is semi-idempotent if it can be realized in some basis as a block diagonal matrix consisting of an identity matrix and a nilpotent Jordan matrix, either of which may have size zero.

\subsubsection{A map from $\Schwartz\left(\squareMatrix_n\left(\finiteField\right)\right)$ to $\Schwartz\left(\GL_n\left(\finiteField\right)\right)$}
We claim that applying $1-\eta_{\singular}$ to $\algebraIsomorphism\left(f\right)$ for $f \in \Schwartz\left(\squareMatrix_n\left(\finiteField\right)\right)$ results in a function whose Fourier transform vanishes on singular elements. Moreover, this resulting function agrees with $f$ on $\GL_n\left(\finiteField\right)$.
\begin{theorem}\label{thm:fourier-transform-of-kovacs-unit-projection}
	Let $f \in \Schwartz\left(\squareMatrix_n\left(\finiteField\right)\right)$, and set $f_0 = \algebraIsomorphism^{-1}\left(\eta_{\singular} \algebraIsomorphism\left(f\right)\right)$. Then
	\begin{enumerate}
		 \item \label{item:fourier-transform-agrees-on-singular-elements} For every singular $X \in \squareMatrix_n\left(\finiteField\right)$,
		 \begin{displaymath}
		 \fourierTransform{\fieldCharacter} f_0\left(X\right) = \fourierTransform{\fieldCharacter}f\left(X\right).
		 \end{displaymath}
		 \item \label{item:function-is-zero-on-regular-elements} For every $g \in \GL_n\left(\finiteField\right)$, $f_0\left(g\right) = 0$.
	\end{enumerate}
\end{theorem}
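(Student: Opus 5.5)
The plan is to work entirely inside the monoid algebra. There we use two facts already stated: the description $\fourierTransform{\fieldCharacter}f\left(X\right) = \evaluation_{\fieldCharacter}\left(\left[X\right] \cdot \algebraIsomorphism\left(f\right)\right)$ of the Fourier transform, and the defining property of $\eta_{\singular}$ as the multiplicative identity of the ideal $\coefficientsField\left[\squareMatrix_n\left(\finiteField\right)\right]_{\singular}$ (\Kovacs{}). By definition of $f_0$, we have $\algebraIsomorphism\left(f_0\right) = \eta_{\singular} \algebraIsomorphism\left(f\right)$, so both claims are statements about this product.

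For part (\ref{item:function-is-zero-on-regular-elements}), note that $\eta_{\singular}$ lies in $\coefficientsField\left[\squareMatrix_n\left(\finiteField\right)\right]_{\singular}$, and this is a two-sided ideal. Hence $\eta_{\singular} \algebraIsomorphism\left(f\right)$ also lies in the ideal, so it is a linear combination of $\left[Y\right]$ with $Y$ singular. Its coefficient at any $\left[g\right]$ with $g \in \GL_n\left(\finiteField\right)$ is therefore zero. By definition of $\algebraIsomorphism$, this coefficient is exactly $f_0\left(g\right)$.

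For part (\ref{item:fourier-transform-agrees-on-singular-elements}), fix a singular $X$. Then $\left[X\right] \in \coefficientsField\left[\squareMatrix_n\left(\finiteField\right)\right]_{\singular}$, so $\left[X\right]\eta_{\singular} = \left[X\right]$. This uses that $\eta_{\singular}$ is an identity for the ideal on both sides; in any case it is central, so the side does not matter. Consequently
\begin{displaymath}
\fourierTransform{\fieldCharacter} f_0\left(X\right) = \evaluation_{\fieldCharacter}\left(\left[X\right] \eta_{\singular} \algebraIsomorphism\left(f\right)\right) = \evaluation_{\fieldCharacter}\left(\left[X\right] \algebraIsomorphism\left(f\right)\right) = \fourierTransform{\fieldCharacter} f\left(X\right).
\end{displaymath}

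There is no real obstacle. The only point requiring care is the order of multiplication: the Fourier transform puts $\left[X\right]$ on the left of $\algebraIsomorphism\left(f\right)$, while $\eta_{\singular}$ multiplies $\algebraIsomorphism\left(f\right)$ on the left. We need $\left[X\right]\eta_{\singular} = \left[X\right]$, which holds because $\eta_{\singular}$ is a two-sided identity of the ideal.
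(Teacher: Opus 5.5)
Your proposal is correct and follows the paper's proof essentially verbatim: part (1) uses $\fourierTransform{\fieldCharacter}f\left(X\right) = \evaluation_{\fieldCharacter}\left(\left[X\right] \algebraIsomorphism\left(f\right)\right)$ together with $\left[X\right]\eta_{\singular} = \left[X\right]$ for singular $X$. Part (2) uses that $\coefficientsField\left[\squareMatrix_n\left(\finiteField\right)\right]_{\singular}$ is an ideal containing $\eta_{\singular}$, so $\eta_{\singular}\algebraIsomorphism\left(f\right)$ has zero coefficient at every invertible $g$.
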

\begin{proof}
	\begin{enumerate}
		\item Let $X \in \squareMatrix_n\left(\finiteField\right)$ be singular. We have that
		\begin{displaymath}
		\fourierTransform{\fieldCharacter} f_0\left(X\right) = \evaluation_{\fieldCharacter}\left(\left[X\right] \algebraIsomorphism\left(f_0\right)\right) = \evaluation_{\fieldCharacter}\left(\left[X\right] \eta_{\singular} \algebraIsomorphism\left(f\right)\right).
		\end{displaymath}
		Since $X$ is singular, we have that $\left[X\right] \eta_{\singular} = \left[X\right]$ and thus
		\begin{displaymath}
		\fourierTransform{\fieldCharacter} f_0\left(X\right) = \evaluation_{\fieldCharacter}\left(\left[X\right] \algebraIsomorphism\left(f\right)\right) = \fourierTransform{\fieldCharacter} f\left(X\right).
		\end{displaymath}
		\item Since $\coefficientsField\left[\squareMatrix_n\left(\finiteField\right)\right]_{\singular} \subset \coefficientsField\left[\squareMatrix_n\left(\finiteField\right)\right]$ is an ideal, $\eta_{\singular} \algebraIsomorphism\left(f\right)$ lies in $\coefficientsField\left[\squareMatrix_n\left(\finiteField\right)\right]_{\singular}$ and thus $f_0$ is supported on singular elements.
	\end{enumerate}
\end{proof}

Let us use \Cref{thm:fourier-transform-of-kovacs-unit-projection} to define a map $\Schwartz\left(\squareMatrix_n\left(\finiteField\right)\right) \to \Schwartz\left(\GL_n\left(\finiteField\right)\right)$ as follows. If $f \in \Schwartz\left(\squareMatrix_n\left(\finiteField\right)\right)$ let us define $f^{\regular} = \fourierTransform{\fieldCharacter^{-1}} \left(\algebraIsomorphism^{-1}\left(\left(1 - \eta_{\singular}\right) \algebraIsomorphism\left(\fourierTransform{\fieldCharacter} f\right)\right)\right)$.
	\begin{proposition}\label{prop:properties-of-regular-function}
	The following properties hold:
	\begin{enumerate}
		\item $\left(f^{\regular}\right)^{\regular} = f^{\regular}$.
		\item \label{item:f-reg-is-a-schwartz-function-on-gln} $f^{\regular} \in \Schwartz\left(\GL_n\left(\finiteField\right)\right)$.
		\item \label{item:f-reg-fourier-transform-agrees-with-fourier-transform-of-f-on-gln} $\fourierTransform{\fieldCharacter} f^{\regular} \restriction_{\GL_n\left(\finiteField\right)} = \fourierTransform{\fieldCharacter} f \restriction_{\GL_n\left(\finiteField\right)}$.
		\item If the characteristic of $\coefficientsField$ does not divide $\sizeof{\GL_n\left(\finiteField\right)}$ then $f^{\regular}$ is the unique function satisfying (\ref{item:f-reg-is-a-schwartz-function-on-gln}) and (\ref{item:f-reg-fourier-transform-agrees-with-fourier-transform-of-f-on-gln}).
	\end{enumerate}
\end{proposition}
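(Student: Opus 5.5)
The plan is to set $h = \fourierTransform{\fieldCharacter} f$ and $h_0 = \algebraIsomorphism^{-1}\left(\eta_{\singular} \algebraIsomorphism\left(h\right)\right)$. Then, by definition and because $\algebraIsomorphism$ is linear,
\begin{displaymath}
\fourierTransform{\fieldCharacter} f^{\regular} = \algebraIsomorphism^{-1}\left(\left(1-\eta_{\singular}\right)\algebraIsomorphism\left(h\right)\right) = h - h_0, \qquad f^{\regular} = \fourierTransform{\fieldCharacter^{-1}}\left(h - h_0\right).
\end{displaymath}
The first identity uses Fourier inversion $\fourierTransform{\fieldCharacter}\fourierTransform{\fieldCharacter^{-1}} = \idmap$. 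All four properties should then follow by applying \Cref{thm:fourier-transform-of-kovacs-unit-projection} to $h$ and combining it with \Cref{cor:fourier-transform-is-zero-on-gln-implies-f-is-zero}.

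\textbf{Property (3).} By \Cref{thm:fourier-transform-of-kovacs-unit-projection}(\ref{item:function-is-zero-on-regular-elements}) applied to $h$, the function $h_0$ vanishes on $\GL_n\left(\finiteField\right)$. Hence $\fourierTransform{\fieldCharacter} f^{\regular} = h - h_0$ agrees with $h = \fourierTransform{\fieldCharacter} f$ on $\GL_n\left(\finiteField\right)$.

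\textbf{Property (2).} Here one has to notice that \Cref{thm:fourier-transform-of-kovacs-unit-projection} was proved for an arbitrary non-trivial additive character, so it applies equally to $\fieldCharacter^{-1}$. Its part (\ref{item:fourier-transform-agrees-on-singular-elements}), with $\fieldCharacter^{-1}$ in place of $\fieldCharacter$ and applied to $h$, gives $\fourierTransform{\fieldCharacter^{-1}} h_0\left(X\right) = \fourierTransform{\fieldCharacter^{-1}} h\left(X\right)$ for every singular $X$. Therefore $f^{\regular}\left(X\right) = \fourierTransform{\fieldCharacter^{-1}}h\left(X\right) - \fourierTransform{\fieldCharacter^{-1}}h_0\left(X\right) = 0$ for singular $X$, so $f^{\regular} \in \Schwartz\left(\GL_n\left(\finiteField\right)\right)$. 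This switch of character is the only point that needs care; everything else is formal.

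\textbf{Property (1).} We have $\left(f^{\regular}\right)^{\regular} = \fourierTransform{\fieldCharacter^{-1}}\algebraIsomorphism^{-1}\left(\left(1-\eta_{\singular}\right)\algebraIsomorphism\left(\fourierTransform{\fieldCharacter} f^{\regular}\right)\right)$. Substituting $\algebraIsomorphism\left(\fourierTransform{\fieldCharacter} f^{\regular}\right) = \left(1-\eta_{\singular}\right)\algebraIsomorphism\left(h\right)$ turns this into $\fourierTransform{\fieldCharacter^{-1}}\algebraIsomorphism^{-1}\left(\left(1-\eta_{\singular}\right)^2\algebraIsomorphism\left(h\right)\right)$. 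Since $\eta_{\singular}$ lies in the ideal $\coefficientsField\left[\squareMatrix_n\left(\finiteField\right)\right]_{\singular}$ and is its identity, $\eta_{\singular}^2 = \eta_{\singular}$, so $\left(1-\eta_{\singular}\right)^2 = 1-\eta_{\singular}$. This gives $\left(f^{\regular}\right)^{\regular} = f^{\regular}$.

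\textbf{Property (4).} Suppose $\phi \in \Schwartz\left(\GL_n\left(\finiteField\right)\right)$ also satisfies (\ref{item:f-reg-fourier-transform-agrees-with-fourier-transform-of-f-on-gln}). Then $\phi - f^{\regular} \in \Schwartz\left(\GL_n\left(\finiteField\right)\right)$, and its Fourier transform vanishes on $\GL_n\left(\finiteField\right)$. By \Cref{cor:fourier-transform-is-zero-on-gln-implies-f-is-zero}, which applies under the hypothesis on the characteristic of $\coefficientsField$, we get $\phi = f^{\regular}$.
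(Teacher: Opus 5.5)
Your proposal is correct and follows the paper's proof essentially step for step. You get (1) from Fourier inversion and $\left(1-\eta_{\singular}\right)^2 = 1-\eta_{\singular}$, (2) from part (1) of \Cref{thm:fourier-transform-of-kovacs-unit-projection} with $\fieldCharacter^{-1}$ in place of $\fieldCharacter$, (3) from part (2) of that theorem plus Fourier inversion, and (4) from \Cref{cor:fourier-transform-is-zero-on-gln-implies-f-is-zero}, with your $h, h_0$ notation simply making explicit what the paper leaves implicit.
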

\begin{proof}
	\begin{enumerate}
		\item This follows from the Fourier inversion formula and from $\left(1-\eta_{\singular}\right)^2 = 1-\eta_{\singular}$, which in turn follows from $\eta_{\singular}^2 = \eta_{\singular}$.
		\item This follows from \Cref{thm:fourier-transform-of-kovacs-unit-projection} part (\ref{item:fourier-transform-agrees-on-singular-elements}) (invoked with $\fieldCharacter^{-1}$ instead of $\fieldCharacter$).
		\item This follows from \Cref{thm:fourier-transform-of-kovacs-unit-projection} part (\ref{item:function-is-zero-on-regular-elements}) and the Fourier inversion formula.
		\item This follows from \Cref{cor:fourier-transform-is-zero-on-gln-implies-f-is-zero}.
	\end{enumerate}
\end{proof}

As a corollary of \Cref{prop:properties-of-regular-function} and \Cref{thm:functional-equation-first-version}, we obtain the following regularized Godement--Jacquet functional equation.
\begin{corollary}
	For every irreducible representation $\pi$ of $\GL_n\left(\finiteField\right)$ and every $f \in \Schwartz\left(\squareMatrix_n\left(\finiteField\right)\right)$ we have
	\begin{displaymath}
	\sum_{g \in \GL_n\left(\finiteField\right)} \fourierTransform{\fieldCharacter} f\left(g^{-1}\right) \pi\left(g\right) = \GaussSumScalar{\pi}{\fieldCharacter} \sum_{g \in \GL_n\left(\finiteField\right)} f^{\regular}\left(g\right) \pi\left(g\right).
	\end{displaymath}
\end{corollary}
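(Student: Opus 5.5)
The corollary follows by feeding the function $f^{\regular}$ into \Cref{thm:functional-equation-first-version}. First, by \Cref{prop:properties-of-regular-function} part (\ref{item:f-reg-is-a-schwartz-function-on-gln}), $f^{\regular}$ lies in $\Schwartz\left(\GL_n\left(\finiteField\right)\right)$. So \Cref{thm:functional-equation-first-version} applies to it and gives
\begin{displaymath}
\sum_{g \in \GL_n\left(\finiteField\right)} \fourierTransform{\fieldCharacter} f^{\regular}\left(g^{-1}\right) \pi\left(g\right) = \GaussSumScalar{\pi}{\fieldCharacter} \sum_{g \in \GL_n\left(\finiteField\right)} f^{\regular}\left(g\right) \pi\left(g\right).
\end{displaymath}

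Next, we rewrite the left-hand side. Every $g \in \GL_n\left(\finiteField\right)$ has $g^{-1} \in \GL_n\left(\finiteField\right)$, so the left-hand side only evaluates $\fourierTransform{\fieldCharacter} f^{\regular}$ at invertible matrices. By \Cref{prop:properties-of-regular-function} part (\ref{item:f-reg-fourier-transform-agrees-with-fourier-transform-of-f-on-gln}), $\fourierTransform{\fieldCharacter} f^{\regular}\left(g^{-1}\right) = \fourierTransform{\fieldCharacter} f\left(g^{-1}\right)$ for each such $g$. Substituting this termwise turns the left-hand side into $\sum_{g} \fourierTransform{\fieldCharacter} f\left(g^{-1}\right)\pi\left(g\right)$, which is the claimed identity.

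There is no real obstacle here, since both ingredients are already established. The only point to check is that part (\ref{item:f-reg-fourier-transform-agrees-with-fourier-transform-of-f-on-gln}) really uses only \Cref{thm:fourier-transform-of-kovacs-unit-projection} part (\ref{item:function-is-zero-on-regular-elements}) and Fourier inversion. Indeed, $\fourierTransform{\fieldCharacter} f^{\regular} = \algebraIsomorphism^{-1}\left(\left(1-\eta_{\singular}\right)\algebraIsomorphism\left(\fourierTransform{\fieldCharacter} f\right)\right)$, and the $\eta_{\singular}$-part of this vanishes on $\GL_n\left(\finiteField\right)$. Neither ingredient needs the characteristic of $\coefficientsField$ to avoid dividing $\sizeof{\GL_n\left(\finiteField\right)}$. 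Consequently the corollary holds without any hypothesis on the characteristic beyond not dividing $q$.
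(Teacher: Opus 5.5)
Your proposal is correct and is exactly the paper's argument: apply \Cref{thm:functional-equation-first-version} to $f^{\regular}$, which lies in $\Schwartz\left(\GL_n\left(\finiteField\right)\right)$ by part (2) of \Cref{prop:properties-of-regular-function}, and then replace $\fourierTransform{\fieldCharacter} f^{\regular}\left(g^{-1}\right)$ by $\fourierTransform{\fieldCharacter} f\left(g^{-1}\right)$ using part (3). Your remark that no hypothesis on the characteristic beyond not dividing $q$ is needed is also right, since only the uniqueness statement, part (4), uses it.
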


\subsubsection{Computation for indicator functions}
Let $A \in \squareMatrix_n\left(\finiteField\right)$ and take $f \in \Schwartz\left(\squareMatrix_n\left(\finiteField\right)\right)$ to be the indicator function of $A$. We have that $\fourierTransform{\fieldCharacter} f \left(X\right) = q^{-\frac{n^2}{2}}\fieldCharacter\left(\trace\left(AX\right)\right)$ for $X \in \squareMatrix_n\left(\finiteField\right)$. Notice that if $A \in \GL_n\left(\finiteField\right)$ then $f \in \Schwartz\left(\GL_n\left(\finiteField\right)\right)$ and from the first version of the functional equation we obtain the equality \begin{equation*}
	q^{-\frac{n^2}{2}} \sum_{g \in \GL_n\left(\finiteField\right)} \fieldCharacter\left(\trace\left(A g^{-1}\right)\right) \pi\left(g\right) = \GaussSumScalar{\pi}{\fieldCharacter} \pi\left(A\right),
\end{equation*}
which can also be easily verified directly by changing variables $g \mapsto g A$.

Let us compute $f^{\regular}$. We have that
\begin{displaymath}
\eta_{\singular} \algebraIsomorphism\left(\fourierTransform{\fieldCharacter} f\right) = -q^{-\frac{n^2}{2}} \mu\left(n\right)^{-1} \sum_{\substack{m \in \squareMatrix_n\left(\finiteField\right) \\
		m \text{ singular}\\
		m \text{ semi-idempotent}}} \sum_{X \in \squareMatrix_n\left(\finiteField\right)} \mu\left(\srank m\right) \left[m X\right] \fieldCharacter\left(\trace\left(A X\right)\right).
		\end{displaymath}
For $g \in \GL_n\left(\finiteField\right)$ we have that $f^{\regular}\left(g\right) = f\left(g\right) - \evaluation_{\fieldCharacter^{-1}}\left(\left[g\right] \eta_{\singular} \algebraIsomorphism\left(\fourierTransform{\fieldCharacter} f\right)\right)$,
which equals
\begin{displaymath}
f^{\regular}\left(g\right) = f\left(g\right) + q^{-n^2} \mu\left(n\right)^{-1} \sum_{\substack{m \in \squareMatrix_n\left(\finiteField\right) \\
		m \text{ singular}\\
		m \text{ semi-idempotent}}} \mu\left(\srank m\right) \sum_{X \in \squareMatrix_n\left(\finiteField\right)} \fieldCharacter\left(\trace\left(\left(A - g m\right)X\right)\right).
		\end{displaymath}
	The inner sum vanishes unless $gm = A$, in which case it equals $q^{n^2}$. Notice that if $A \in \GL_n\left(\finiteField\right)$, then $gm = A$ is impossible, because $m$ is singular. Thus if $A \in \GL_n\left(\finiteField\right)$ we have that $f^{\regular} = f$. If $A \notin \GL_n\left(\finiteField\right)$ we have $f\left(g\right) = 0$ and thus
	\begin{displaymath}
	f^{\regular}\left(g\right) = \begin{dcases}
	\mu\left(n\right)^{-1} \mu\left( \srank \left(g^{-1} A\right) \right) & \text{if } g^{-1}A \text{ is semi-idempotent,}\\
	0 & \text{otherwise.}
	\end{dcases}
	\end{displaymath}
	Notice that this formula is also true if $A \in \GL_n\left(\finiteField\right)$ since in that case $f^{\regular} = f$ and $g^{-1} A$ is semi-idempotent if and only if $A = g$.
	
	\subsubsection{Explicit regularized functional equation}\label{subsec:regularized-godement-jacquet-functional-equation}
	The computation above allows us to give a somewhat explicit formula for $f^{\regular}$ for a general function $f$.
	
	\begin{theorem}\label{thm:godement-jacquet-for-arbitrary-functions}
		For every $f \in \Schwartz\left(\squareMatrix_n\left(\finiteField\right)\right)$ and every $X \in \squareMatrix_n\left(\finiteField\right)$, we have
		
		\begin{displaymath}
		f^{\regular}\left(X\right) = \mu\left(n\right)^{-1} \sum_{\substack{m \in \squareMatrix_n\left(\finiteField\right)\\
	m \text{ semi-idempotent}}} \mu\left(\srank m\right) f\left(Xm\right).
	\end{displaymath}
	Thus for every irreducible representation $\pi$ of $\GL_n\left(\finiteField\right)$ and every $f \in \Schwartz\left(\squareMatrix_n\left(\finiteField\right)\right)$, we have the equality
	\begin{equation*}
		\begin{split}
			\sum_{g \in \GL_n\left(\finiteField\right)} \fourierTransform{\fieldCharacter}f\left(g^{-1}\right) \pi\left(g\right) =& \mu\left(n\right)^{-1} \GaussSumScalar{\pi}{\fieldCharacter} \sum_{g \in \GL_n\left(\finiteField\right)} \sum_{\substack{m \in \squareMatrix_n\left(\finiteField\right)\\
					m \text{ semi-idempotent}}} \mu\left(\srank m\right) f\left(gm\right) \pi\left(g\right).
		\end{split}
	\end{equation*}
	\end{theorem}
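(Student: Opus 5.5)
The plan is to reduce to indicator functions by linearity, using the computation just carried out. The map $f \mapsto f^{\regular}$ is linear, since it is a composition of Fourier transforms, $\algebraIsomorphism$, and left multiplication by $1-\eta_{\singular}$. The right-hand side of the claimed formula is also linear in $f$. It therefore suffices to verify the formula when $f = \delta_A$ is the indicator function of a single $A \in \squareMatrix_n\left(\finiteField\right)$.

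For $f = \delta_A$ and a general $X$, the right-hand side equals
\begin{displaymath}
\mu\left(n\right)^{-1} \sum_{\substack{m \text{ semi-idempotent}\\ Xm = A}} \mu\left(\srank m\right).
\end{displaymath}
We compare this with $f^{\regular}\left(X\right)$ in two cases.

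When $X = g$ is invertible, the only $m$ with $gm = A$ is $m = g^{-1}A$. The sum is then $\mu\left(n\right)^{-1}\mu\left(\srank\left(g^{-1}A\right)\right)$ if $g^{-1}A$ is semi-idempotent, and $0$ otherwise. This is exactly the formula obtained in the computation for indicator functions, including the remark that it also covers $A \in \GL_n\left(\finiteField\right)$.

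When $X$ is singular, \Cref{prop:properties-of-regular-function} part (\ref{item:f-reg-is-a-schwartz-function-on-gln}) gives $f^{\regular}\left(X\right)=0$. So we must show
\begin{displaymath}
\sum_{\substack{m \text{ semi-idempotent}\\ Xm = A}} \mu\left(\srank m\right) = 0
\end{displaymath}
for every singular $X$ and every $A$. This is the main obstacle. My first attempt is to derive it from the Harman--Snowden--Zelingher formula rather than prove a new combinatorial identity. Write
\begin{displaymath}
1 - \eta_{\singular} = \mu\left(n\right)^{-1}\sum_{m \text{ semi-idempotent}} \mu\left(\srank m\right)\left[m\right].
\end{displaymath}
This holds because the identity matrix is the unique semi-idempotent of stable rank $n$: a semi-idempotent invertible matrix satisfies $m^{N+1}=m^N$, hence $m = I_n$. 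Since $\eta_{\singular}$ is the unit of the ideal, $\left[X\right]\eta_{\singular} = \left[X\right]$ for singular $X$, so $\left[X\right]\left(1-\eta_{\singular}\right) = 0$. Expanding,
\begin{displaymath}
0=\sum_{m \text{ semi-idempotent}}\mu\left(\srank m\right)\left[Xm\right].
\end{displaymath}
Reading off the coefficient of $\left[A\right]$ gives precisely the vanishing sum needed. This settles the case of singular $X$.

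One point needs care: the formula is stated with $f\left(Xm\right)$, i.e.\ $m$ acting on the right. This is consistent with the description of $f^{\regular}$ through left multiplication by $1-\eta_{\singular}$ after the Fourier transform. The indicator computation already confirms that the right action is the correct one, and centrality of $\eta_{\singular}$ makes left versus right irrelevant in the argument above.

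The functional equation then follows by inserting this formula for $f^{\regular}\left(g\right)$ into the corollary preceding this subsection, i.e.\ \Cref{thm:functional-equation-first-version} applied to $f^{\regular}$, combined with \Cref{prop:properties-of-regular-function} part (\ref{item:f-reg-fourier-transform-agrees-with-fourier-transform-of-f-on-gln}).
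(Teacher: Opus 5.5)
Your proposal is correct and follows essentially the same route as the paper. Linearity plus the indicator-function computation handles invertible $X$, and for singular $X$ both sides vanish: $f^{\regular}$ is supported on $\GL_n\left(\finiteField\right)$, while $\left[X\right]\left(1-\eta_{\singular}\right)=0$ kills the right-hand side. The only cosmetic difference is that the paper gets this vanishing for general $f$ by applying $\rho'\left(\left[X\right]\left(1-\eta_{\singular}\right), \left[\IdentityMatrix{n}\right]\right)$ to $f$ and evaluating at $\IdentityMatrix{n}$, whereas you read off the coefficient of $\left[A\right]$ after reducing to indicator functions.
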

	\begin{proof}
		For $X \in \GL_n\left(\finiteField\right)$ this follows from the work above by writing $f$ as a linear combination of indicator functions. For $X \notin \GL_n\left(\finiteField\right)$ this follows from the fact that $\left[X\right]\left(1-\eta_{\singular}\right) = 0$ in $\coefficientsField\left[\squareMatrix_n\left(\finiteField\right)\right]$, and thus
		\begin{displaymath}
			\sum_{\substack{m \in \squareMatrix_n\left(\finiteField\right)\\
					m \text{ semi-idempotent}}} \mu\left(\srank m\right) \left[Xm\right] = 0.
		\end{displaymath}
		We thus have that $\rho'\left(\left[X\right]\left(1 - \eta_{\singular}\right), \left[\IdentityMatrix{n}\right]\right) f = 0$ which implies the result by evaluating the resulting function at $\IdentityMatrix{n}$.
	\end{proof}
	
	For what follows, it will be useful to introduce some notation. If $\pi$ is an irreducible representation of $\GL_n\left(\finiteField\right)$ and $A \in \squareMatrix_n\left(\finiteField\right)$, define $\pi^{\natural}\left(A\right) \in \EndomorphismRing_{\coefficientsField}\left(\pi\right)$ by the formula
	\begin{displaymath}
		\pi^{\natural}\left(A\right) = \sum_{g \in \GL_n\left(\finiteField\right)} \left(\delta_A\right)^{\regular}\left(g\right) \pi\left(g\right) =  \mu\left(n\right)^{-1}\sum_{g \in \GL_n\left(\finiteField\right)} \sum_{\substack{m \in \squareMatrix_n\left(\finiteField\right)\\
				m \text{ semi-idempotent}\\
				gm = A}} \mu\left(\srank m\right) \pi\left(g\right),
	\end{displaymath}
	where $\delta_A$ is the indicator function of $A$. By the results above we have that
	\begin{equation}\label{eq:expression-of-pi-natural-with-gauss-sums}
		\pi^{\natural}\left(A\right) = \GaussSumScalar{\pi}{\fieldCharacter}^{-1} q^{-\frac{n^2}{2}} \sum_{g \in \GL_n\left(\finiteField\right)} \fieldCharacter\left(\trace\left(A g^{-1}\right)\right) \pi\left(g\right).
	\end{equation}
	Also, as explained above, if $A \in \GL_n\left(\finiteField\right)$ then $\pi^{\natural}\left(A\right) = \pi\left(A\right)$. Thus $\pi^{\natural}$ can be thought of as an extension of $\pi$ to $\squareMatrix_n\left(\finiteField\right)$. However, generally speaking, $\pi^{\natural} \colon \squareMatrix_n\left(\finiteField\right) \to \EndomorphismRing_{\coefficientsField}\left(\pi\right)$ is not a homomorphism of monoids.
	It is, however, not difficult to show (either from the definition or from \eqref{eq:expression-of-pi-natural-with-gauss-sums}) that for $g_1, g_2 \in \GL_n\left(\finiteField\right)$ and $A \in \squareMatrix_n\left(\finiteField\right)$ we have
	\begin{equation}\label{eq:equivariance-of-pi-natural}
		\pi^{\natural}\left(g_1 A g_2\right) = \pi\left(g_1\right) \circ \pi^{\natural}\left(A\right) \circ \pi\left(g_2\right).
	\end{equation}
	
	Using this notation, we may restate \Cref{thm:godement-jacquet-for-arbitrary-functions} in the following form.
	\begin{theorem}\label{thm:gj-in-terms-of-pi-natural}
		For every $f \in \Schwartz\left(\squareMatrix_n\left(\finiteField\right)\right)$ and every irreducible representation $\pi$ of $\GL_n\left(\finiteField\right)$ we have
		\begin{displaymath}
			\sum_{g \in \GL_n\left(\finiteField\right)} \fourierTransform{\fieldCharacter}f\left(g^{-1}\right) \pi\left(g\right) = \GaussSumScalar{\pi}{\fieldCharacter} \sum_{X \in \squareMatrix_n\left(\finiteField\right)} f\left(X\right) \pi^{\natural}\left(X\right).
		\end{displaymath}
	\end{theorem}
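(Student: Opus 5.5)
This is a direct reorganization of \Cref{thm:godement-jacquet-for-arbitrary-functions}: we rewrite its right-hand side by regrouping according to the value of $X$. The plan is to start from the corollary preceding it, in the form
\begin{displaymath}
	\sum_{g \in \GL_n\left(\finiteField\right)} \fourierTransform{\fieldCharacter} f\left(g^{-1}\right) \pi\left(g\right) = \GaussSumScalar{\pi}{\fieldCharacter} \sum_{g \in \GL_n\left(\finiteField\right)} f^{\regular}\left(g\right) \pi\left(g\right).
\end{displaymath}
The main point is that the map $f \mapsto f^{\regular}$ is linear. This holds because it is the composition of the Fourier transforms, the linear isomorphism $\algebraIsomorphism$, and left multiplication by $1-\eta_{\singular}$. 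Alternatively, linearity can be read off directly from the explicit formula in \Cref{thm:godement-jacquet-for-arbitrary-functions}.

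First write $f = \sum_{X \in \squareMatrix_n\left(\finiteField\right)} f\left(X\right) \delta_X$. Linearity then gives $f^{\regular} = \sum_X f\left(X\right) \left(\delta_X\right)^{\regular}$. Substituting this into the right-hand side and exchanging the two finite sums yields
\begin{displaymath}
	\GaussSumScalar{\pi}{\fieldCharacter} \sum_{X} f\left(X\right) \sum_{g \in \GL_n\left(\finiteField\right)} \left(\delta_X\right)^{\regular}\left(g\right) \pi\left(g\right).
\end{displaymath}
By the first expression in the definition of $\pi^{\natural}$, the inner sum is exactly $\pi^{\natural}\left(X\right)$, which gives the claimed identity.

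As a consistency check, the second expression for $\pi^{\natural}\left(A\right)$ in its definition agrees with the first via the explicit formula $\left(\delta_A\right)^{\regular}\left(g\right) = \mu\left(n\right)^{-1}\sum_{m \text{ semi-idempotent},\, gm = A} \mu\left(\srank m\right)$. This formula follows from \Cref{thm:godement-jacquet-for-arbitrary-functions} applied to $f = \delta_A$. The same identity can also be obtained by expanding the explicit right-hand side of \Cref{thm:godement-jacquet-for-arbitrary-functions} and collecting the terms with $gm = X$.

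There is no real obstacle here. The only point needing care is to justify linearity of $f \mapsto f^{\regular}$ and the interchange of the finite sums, both of which are immediate.
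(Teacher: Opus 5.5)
Your proposal is correct and matches the paper, which presents this theorem simply as a restatement of \Cref{thm:godement-jacquet-for-arbitrary-functions} in the notation $\pi^{\natural}\left(A\right) = \sum_{g} \left(\delta_A\right)^{\regular}\left(g\right) \pi\left(g\right)$. Your argument supplies that restatement: linearity of $f \mapsto f^{\regular}$, expanding $f$ in indicator functions, and swapping the finite sums (equivalently, regrouping the explicit right-hand side by $X = gm$).
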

	
	\begin{remark}
		The non-vanishing of the operator $\pi^{\natural}\left(A\right)$ for a singular matrix $A$ is closely related to the work of Gurevich--Howe on the theta correspondence for finite general linear groups~\cite{gurevich2021harmonic}. If $\rank A = r$ then $\pi^{\natural}\left(A\right)$ is zero unless $\pi$ has strict tensor co-rank $\ge n-r$. See \Cref{sec:representations-with-strict-tensor-corank}.
	\end{remark}
	\begin{remark}
		If the characteristic of $\coefficientsField$ divides $q-1$, then by~\cite[Theorem 4.8]{KurinczukMatringeSecherre2026} the Godement--Jacquet functional equation holds for any irreducible representation $\pi$ and any function $f \in \Schwartz\left(\squareMatrix_n\left(\finiteField\right)\right)$, without any regularization. By substituting the indicator function $\delta_A$ in \Cref{thm:gj-in-terms-of-pi-natural}, it follows in this case that $\pi^{\natural}\left(A\right) = 0$ for any singular $A$.
	\end{remark}
	
	\section{Results for $\coefficientsField\left[\squareMatrix_n\left(\finiteField\right)\right]$}
	
	In this section, we use the functional equation we established above to obtain results for the representation theory of the monoid algebra $\coefficientsField\left[\squareMatrix_n\left(\finiteField\right)\right]$. These results include a Godement--Jacquet functional equation and an explicit formula for primitive central idempotents corresponding to irreducible representations of $\coefficientsField\left[\squareMatrix_n\left(\finiteField\right)\right]$.
	
	\subsection{An overview of the representation theory of $\coefficientsField\left[\squareMatrix_n\left(\finiteField\right)\right]$}\label{subsec:overview-of-rep-theory-of-mn-fq}
	We quickly recall fundamental results regarding the representation theory of $\coefficientsField\left[\squareMatrix_n\left(\finiteField\right)\right]$.
	
	\subsubsection{Structure of $\coefficientsField\left[\squareMatrix_n\left(\finiteField\right)\right]$}
	We follow~\cite[Section 2.2]{HarmanSnowdenZelingher2025} to describe a decomposition of the monoid algebra $\coefficientsField\left[\squareMatrix_n\left(\finiteField\right)\right]$. Let $0 \le r \le n$. Set
	\begin{displaymath}
	\coefficientsField\left[\mathcal{G}_{n,r}\right] = \bigoplus_{\substack{V,W \subset \finiteField^n\\
	\dim V = \dim W = r}} \coefficientsField\left[\Isom\left(V, W\right)\right],
	\end{displaymath}
	where the direct sum is over all linear subspaces $V$ and $W$ of dimension $r$. Here $\Isom\left(V, W\right)$ is the set consisting of $\finiteField$-linear isomorphisms from $V$ to $W$, and $\coefficientsField\left[\Isom\left(V,W\right)\right]$ is the $\coefficientsField$-linear span of $\Isom\left(V, W\right)$. If $T \colon V \to W$ is an isomorphism, we write $\left[T\right]$ for the corresponding element in $\coefficientsField\left[\Isom\left(V,W\right)\right]$. We define multiplication on $\coefficientsField\left[\mathcal{G}_{n,r}\right]$ as follows. Let $T_1 \colon V_1 \to W_1$ and $T_2 \colon V_2 \to W_2$ be isomorphisms. Then we set $\left[T_1\right] \cdot \left[T_2\right] = \left[T_1 \circ T_2\right]$ if $W_2 = V_1$ and $\left[T_1\right] \cdot \left[T_2\right] = 0$ otherwise. We then extend multiplication bilinearly.
	
	Let $\coefficientsField\left[\mathcal{G}_n\right] = \bigoplus_{r = 0}^{n} \coefficientsField\left[\mathcal{G}_{n,r}\right]$. We equip $\coefficientsField\left[\mathcal{G}_n\right]$ with multiplication by setting the different summands in the direct sum to be orthogonal.
	
	For every $0 \le r \le n$ let $\Psi_r \colon \coefficientsField\left[\squareMatrix_n\left(\finiteField\right)\right] \to \coefficientsField\left[\mathcal{G}_{n,r}\right]$ be the $\coefficientsField$-linear map defined on basis elements by
	\begin{displaymath}
	\Psi_r\left(\left[m\right]\right) = \sum_{\substack{V \subset \finiteField^n\\
	\dim V = r\\
	\dim m\left(V\right) = r}} \left[m \restriction_{V}\right].
	\end{displaymath}
	Note that the sum is over all $V$ such that $m \restriction_V \colon V \to m\left(V\right)$ is an isomorphism.
	
	By~\cite[Proposition 2.2]{HarmanSnowdenZelingher2025} (see also~\cite{Kovacs1992}) we have that $\Psi_r$ is a homomorphism of rings and that the $\coefficientsField$-linear map $\Psi \colon \coefficientsField\left[\squareMatrix_n\left(\finiteField\right)\right] \to \coefficientsField\left[\mathcal{G}_{n}\right]$ given by the formula $\Psi = \bigoplus_{r=0}^n \Psi_r$ is an isomorphism of rings. In~\cite[Section 4]{HarmanSnowdenZelingher2025} we also described the inverse map, which we explain in the following theorem.
	
	\begin{theorem}\label{thm:inverse-image-of-Psi}
		Let $V \subset \finiteField^n$ be a linear subspace of dimension $r$, where $0 \le r \le n$. Define
		\begin{displaymath}
		\eta_V = q^{-r \left(n - r\right)} \mu\left(r\right)^{-1} \sum_{\substack{S \colon \finiteField^n \to \finiteField^n\\
				S \text{ semi-idempotent}\\
				\ImageOf S \subset V}} \mu\left(\srank S\right) \left[S\right].
				\end{displaymath}
		Let $T \colon V \to W$ be an isomorphism and let $T' \colon \finiteField^n \to \finiteField^n$ be any linear map such that $T' \restriction_{V} = T$. Then $\Psi\left(\left[T'\right] \eta_V\right) = \left[T\right]$.
	\end{theorem}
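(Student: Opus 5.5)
The plan is to reduce the statement to the single identity $\Psi\left(\eta_V\right) = \left[\idmap_V\right]$, where $\left[\idmap_V\right] \in \coefficientsField\left[\Isom\left(V,V\right)\right] \subset \coefficientsField\left[\mathcal{G}_{n,r}\right]$.

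\textbf{Reduction.} Given this identity, multiplicativity of $\Psi$ gives $\Psi\left(\left[T'\right]\eta_V\right) = \Psi\left(\left[T'\right]\right)\cdot\left[\idmap_V\right]$. By definition,
\begin{displaymath}
\Psi\left(\left[T'\right]\right) = \sum_{s}\sum_{U} \left[T'\restriction_U\right],
\end{displaymath}
where $U$ runs over subspaces with $\dim T'\left(U\right) = \dim U = s$. Multiplication on the right by $\left[\idmap_V\right]$ kills every summand with $s \ne r$, by orthogonality of the summands of $\coefficientsField\left[\mathcal{G}_n\right]$. Within degree $r$, the product rule keeps only the summand with $U = V$, and that summand is present because $T'\restriction_V = T$ is an isomorphism. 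It therefore contributes $\left[T\right]\cdot\left[\idmap_V\right] = \left[T\right]$.

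\textbf{The case $V = \finiteField^n$.} Here $r = n$ and $\eta_V = 1 - \eta_{\singular}$ by the Harman--Snowden--Zelingher formula. The singular ideal $\coefficientsField\left[\squareMatrix_n\left(\finiteField\right)\right]_{\singular}$ is spanned by matrices of rank $<n$, so $\Psi$ maps it into $\bigoplus_{s<n}\coefficientsField\left[\mathcal{G}_{n,s}\right]$. Comparing dimensions (the singular matrices biject with pairs (subspace, iso) in lower degrees, via Kovács), the image is exactly this sum. Hence $\Psi\left(\eta_{\singular}\right)$ is the unit of that ideal, and $\Psi\left(1-\eta_{\singular}\right)$ is the unit of $\coefficientsField\left[\mathcal{G}_{n,n}\right] = \coefficientsField\left[\GL_n\left(\finiteField\right)\right]$, namely $\left[\idmap_{\finiteField^n}\right]$.

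\textbf{General $V$.} Choose a complement $V'$, so that $\finiteField^n = V \oplus V'$. A map $S$ with $\ImageOf S \subset V$ is a block matrix
\begin{displaymath}
S = \begin{pmatrix} A & B \\ 0 & 0 \end{pmatrix}, \qquad A \in \EndomorphismRing\left(V\right),\ B \in \Hom\left(V', V\right).
\end{displaymath}
Since $S^k = \begin{pmatrix} A^k & A^{k-1}B \\ 0 & 0\end{pmatrix}$, $S$ is semi-idempotent if and only if $A$ is, and then $\srank S = \srank A$. So $\eta_V$ is the average over the $q^{r(n-r)}$ choices of $B$ of the "lift" of the element $\eta^{(V)} \coloneq \mu\left(r\right)^{-1}\sum_{A}\mu\left(\srank A\right)\left[A\right] \in \coefficientsField\left[\EndomorphismRing\left(V\right)\right]$. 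By the previous step applied to $V$ in place of $\finiteField^n$, $\eta^{(V)}$ maps to $\left[\idmap_V\right]$ under the $\Psi$ for $V$.

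Next I compute $\Psi_s$ of each block matrix. For a subspace $U \subset \finiteField^n$ of dimension $s$, let $\pi_B$ denote the projection $U \to V$, $(v,v') \mapsto v + Bv'$. Then $S\restriction_U = A\circ \pi_B\restriction_U$. I will regroup the sum over $(U,B)$ according to the subspace $\pi_B\left(U\right) \subset V$ and show three things:
\begin{itemize}
\item For each $B$ the inner sum over $A$ reproduces the $V$-level identity, which forces $\pi_B\left(U\right) = V$ and $A = \idmap_V$.
\item For $s < r$ the contributions cancel.
\item For $s = r$, every $U$ other than $V$ cancels after the average over $B$, leaving exactly $\left[\idmap_V\right]$. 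Here the factor $q^{-r(n-r)}$ exactly cancels the number of $B$.
\end{itemize}

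The main obstacle is this last bookkeeping: controlling the terms where $U \not\subset V$ but $\pi_B\restriction_U$ is still injective. My first attempt would be to handle it via the factorization $\begin{pmatrix} A & B\\0&0\end{pmatrix} = \begin{pmatrix}A&0\\0&0\end{pmatrix}\begin{pmatrix}1&C\\0&0\end{pmatrix}$, valid whenever $B = AC$ is solvable. Failing that, I would use left-multiplication equivariance: $\eta_V$ is fixed under conjugation by the stabilizer of $V$ and satisfies $\left[S\right]\eta_V = \left[S\right]$ for $\ImageOf S \subset V$. I would then characterize $\Psi\left(\eta_V\right)$ as the unique idempotent with these properties.
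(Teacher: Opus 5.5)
Your reduction to $\Psi\left(\eta_V\right)=\left[\idmap_V\right]$ is correct. So is your treatment of $V=\finiteField^n$, via $\eta_{\finiteField^n}=1-\eta_{\singular}$ and the dimension count. Your block description of $\eta_V$ as a $q^{-r(n-r)}$-average of lifts of $\eta^{(V)}$ is also correct. The gap is the general case, which is the real content of the statement.

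\textbf{Where it fails.} Your regrouping rests on the identity $S\restriction_U=A\circ\pi_B\restriction_U$, and that identity is false: $S(v+v')=Av+Bv'$, whereas $A\pi_B(v+v')=Av+ABv'$. It fails exactly when $A$ is singular, and those are the terms that do the cancelling. Already for $n=2$, $r=1$, take a line $U\neq V$ and an isomorphism $U\to V$. The $A=1$ contribution to its coefficient is cancelled by a term with $A=0$, $B\neq 0$. Your factorization discards that term, since it treats $S\restriction_U$ as $0\circ\pi_B\restriction_U=0$. So your three bullets cannot be obtained this way.

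The fallback fails as well. The property $\left[S\right]\eta_V=\left[S\right]$ for $\ImageOf S\subset V$ is false: for $V=\finiteField^n$ and $S$ singular, $\left[S\right](1-\eta_{\singular})=0$. The uniqueness characterization you would rely on is also not established.

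\textbf{The missing step.} A direct coefficient computation closes the gap. The cancellation comes from the lower-degree part of the $V$-level identity, not from averaging over $B$.

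Let $S_{A,B}$ be the map $v+v'\mapsto Av+Bv'$, and let $P'$ be the projection onto $V'$ along $V$. Fix an isomorphism $\phi\colon U\to W$ with $\dim U=s$ and $W\subset V$. Put $U_0=U\cap V$ and $s_0=\dim U_0$. Then $S_{A,B}\restriction_U=\phi$ if and only if
\begin{itemize}
\item $A\restriction_{U_0}=\phi\restriction_{U_0}$, and
\item $B\left(P'u\right)=\phi(u)-A\left(u-P'u\right)$ for all $u\in U$.
\end{itemize}
The second condition is well defined given the first. It fixes $B$ on the $(s-s_0)$-dimensional space $P'(U)$ and leaves $B$ free elsewhere. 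Hence the coefficient of $\left[\phi\right]$ in $\Psi_s\left(\eta_V\right)$ is
\[
q^{-r(s-s_0)}\,\mu(r)^{-1}\sum_{\substack{A\in\EndomorphismRing(V)\ \text{semi-idempotent}\\ A\restriction_{U_0}=\phi\restriction_{U_0}}}\mu\left(\srank A\right).
\]
This is $q^{-r(s-s_0)}$ times the coefficient of $\left[\phi\restriction_{U_0}\right]$ in $\Psi^{(V)}_{s_0}\left(\eta^{(V)}\right)$. By your case $V=\finiteField^n$, applied to $\EndomorphismRing(V)$, it vanishes unless $s_0=r$ and $\phi\restriction_{U_0}=\idmap_V$. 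Since $s_0\le s\le r$, that forces $U=V$ and $\phi=\idmap_V$, where the coefficient is $1$.

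\textbf{Comparison with the paper.} Completed this way, your route is a genuinely different and more self-contained proof. The paper quotes Theorem 4.2 of \cite{HarmanSnowdenZelingher2025} for the degrees $j\ge\dim V$. It kills the degrees $j<\dim V$ by writing $\Psi_j(\eta_V)=\Psi_j(\eta_V)\,\Psi_j\bigl(\sum_{\dim W=j}\eta_W\bigr)$, where the second factor is the unit of $\coefficientsField\left[\mathcal{G}_{n,j}\right]$. It then uses the pairwise orthogonality of the $\eta_W$ (Proposition 4.9 there). Your route needs only the formula for $\eta_{\singular}$ in $\squareMatrix_r\left(\finiteField\right)$ and the fact that $\Psi$ is an isomorphism.
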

	\begin{proof}
		By~\cite[Theorem 4.2]{HarmanSnowdenZelingher2025} we have that $\Psi_j\left(\eta_V\right) = 0$ for $j > \dim V$ and that $\Psi_j\left(\eta_V\right) = \left[\idmap_V\right]$ for $j = \dim V$. It follows that for $j = \dim V$
		\begin{displaymath}
		\Psi_j\left(\left[T'\right]\eta_V\right)= \Psi_j\left(\left[T'\right]\right) \Psi_j\left(\eta_V\right) = \sum_{\substack{U \subset \finiteField^n \\
		\dim U = j\\
		\dim T'\left(U\right) = j}} \left[T' \restriction_U \right] \cdot \left[\idmap_{V}\right].
		\end{displaymath}
	In order for the summand to be non-zero we must have $U = V$ and thus the sum equals $\left[T' \restriction_V \circ \idmap_V\right] = \left[T\right]$.
	
	It remains to show that $\Psi_j\left(\left[T'\right]\eta_V\right) = 0$ for $j < \dim V$. It suffices to show that $\Psi_j\left(\eta_V\right) = 0$ for $j < \dim V$. Let
	\begin{displaymath}
	\eta_j = \sum_{\substack{W \subset \finiteField^n\\
			\dim W \le j}} \eta_W.
			\end{displaymath}
	Also define $\eta_{-1} = 0$.
	Let us write
	\begin{displaymath}
	\Psi_j\left(\eta_V\right) = \sum_i c_i \left[T_i\right],
	\end{displaymath}
	where $T_i \colon U_i \to U'_i$ are isomorphisms of $j$-dimensional linear subspaces of $\finiteField^n$ and $c_i \in \coefficientsField$. Then
	\begin{equation*}
		\begin{split}
			\Psi_j\left(\eta_V \left(\eta_j - \eta_{j-1}\right)\right) &= \Psi_j\left(\eta_V\right)\Psi_j\left(\eta_j-\eta_{j-1}\right) \\
			&= \sum_i \sum_{\substack{W \subset \finiteField^n\\
					\dim W = j}} c_i \left[T_i\right] \cdot \left[\idmap_W\right]  = \sum_i c_i \left[T_i\right] = \Psi_j\left(\eta_V\right),
		\end{split}
	\end{equation*}
	where we used the fact that $\Psi_j$ is a homomorphism. By~\cite[Proposition 4.9]{HarmanSnowdenZelingher2025} we have that $\left(\eta_W\right)_W$ are pairwise orthogonal (where $W$ ranges over all linear subspaces of $\finiteField^n$). Since $\dim V > j$, $\eta_V$ is orthogonal to every $\eta_W$ in the definitions of $\eta_j$ and $\eta_{j-1}$ and thus $\eta_V \eta_j = \eta_V \eta_{j-1} = 0$ which implies $\Psi_j\left(\eta_V\right) = 0$.
	\end{proof}

	\subsubsection{Representations of $\coefficientsField\left[\squareMatrix_n\left(\finiteField\right)\right]$}\label{subsec:representations-of-m-n-f-q}
	The decomposition above allows us to identify $\coefficientsField\left[\squareMatrix_n\left(\finiteField\right)\right]$ with the direct sum $\bigoplus_{r=0}^{n} \squareMatrix_{N\left(r\right)}\left(\coefficientsField\left[\GL_r\left(\finiteField\right)\right]\right)$, where $N\left(r\right) = \binom{n}{r}_q$ is the number of $r$-dimensional linear subspaces of $\finiteField^n$. This is done as follows.
	
	For every linear subspace $V \subset \finiteField^n$ of dimension $r$ fix an isomorphism $\phi_{V} \colon V \to \finiteField^r$. Given $r$-dimensional linear subspaces $V$ and $W$ of $\finiteField^n$ let $E_{W,V} \in \squareMatrix_{N\left(r\right)}\left(\coefficientsField\left[\GL_r\left(\finiteField\right)\right]\right)$ be a matrix whose rows and columns are indexed by $r$-dimensional linear subspaces of $\finiteField^n$, consisting of zero everywhere, except for the entry indexed by $\left(W,V\right)$ where the value is $1$. Given an $\finiteField$-linear isomorphism $T \colon V \to W$ of $r$-dimensional linear subspaces of $\finiteField^n$, we map $\left[T\right]$ to the matrix $\left[\phi_{W} \circ T \circ \phi_{V}^{-1}\right] E_{W,V}$. We extend this map to a map $\coefficientsField\left[\mathcal{G}_{n,r}\right] \to \squareMatrix_{N\left(r\right)}\left(\coefficientsField\left[\GL_r\left(\finiteField\right)\right]\right)$ by linearity. This induces an isomorphism of rings $\coefficientsField\left[\mathcal{G}_{n}\right] \to \bigoplus_{r=0}^{n} \squareMatrix_{N\left(r\right)}\left(\coefficientsField\left[\GL_r\left(\finiteField\right)\right]\right)$. 
	
	By~\cite[Example A.27]{Steinberg2016}, the category of $\squareMatrix_{N\left(r\right)}\left(\coefficientsField\left[\GL_r\left(\finiteField\right)\right]\right)$-modules is equivalent to the category of $\coefficientsField\left[\GL_r\left(\finiteField\right)\right]$-modules. Given a representation $\left(\sigma, \mathrm{V}_{\sigma}\right)$ of $\GL_r\left(\finiteField\right)$, the corresponding $\squareMatrix_{N\left(r\right)}\left(\coefficientsField\left[\GL_r\left(\finiteField\right)\right]\right)$-module is given by $L_n\left(\sigma\right) = \mathrm{V}_{\sigma} \otimes \coefficientsField^{N\left(r\right)}$ with action defined on pure tensors by
	\begin{displaymath}
	A \cdot \left(v_{\sigma} \otimes b_W\right) = \sum_{V} \left(\sigma\left(a_{VW}\right) v_{\sigma}\right) \otimes b_V,
	\end{displaymath}
	where $v_{\sigma} \in \mathrm{V}_{\sigma}$, $A = \left(a_{V,W}\right)_{V,W} \in \squareMatrix_{N\left(r\right)}\left(\coefficientsField\left[\GL_r\left(\finiteField\right)\right]\right)$, and where $\left(b_W\right)_{W}$ is the standard basis of $\coefficientsField^{N\left(r\right)}$. Here $V$ and $W$ range over all $r$-dimensional linear subspaces of $\finiteField^n$ and $\sigma$ is realized as a representation of $\coefficientsField\left[\GL_r\left(\finiteField\right)\right]$ in the usual way, by mapping $\left[g\right] \mapsto \sigma\left(g\right)$ for $g \in \GL_r\left(\finiteField\right)$ and extending linearly to arbitrary elements.
	
	\subsubsection{Dual space of a representation}
	Suppose that $\left(\sigma, \mathrm{V}_{\sigma}\right)$ is an irreducible representation of $\GL_r\left(\finiteField\right)$ for $0 \le r \le n$ and let $\Pi = L_n\left(\sigma\right)$.
	
	As explained above, $\Pi$ can be realized as the tensor product $\mathrm{V}_{\sigma} \otimes_{\coefficientsField} \coefficientsField^{N\left(r\right)}$. We let $\Pi^{\vee} = \Hom_{\coefficientsField}\left(\Pi, \coefficientsField\right)$. Then $\Pi^{\vee}$ can be identified with $L_n\left(\sigma^{\vee}\right)$ where $\sigma^{\vee}$ is the dual representation of $\sigma$ as follows. For $v_{\sigma} \in \sigma$, $v_{\sigma^{\vee}} \in \sigma^{\vee}$ and $r$-dimensional linear subspaces $V, V'$ of $\finiteField^n$ we set
	\begin{displaymath}
		\innerproduct{v_{\sigma} \otimes b_{V}}{v_{\sigma^{\vee}} \otimes b_{V'}} = \innerproduct{v_{\sigma}}{v_{\sigma^{\vee}}} \delta_{V, V'}.
	\end{displaymath}
	Here $\innerproduct{v_{\sigma}}{v_{\sigma^{\vee}}}$ is the standard evaluation map and $\delta_{V,V'}$ is the Kronecker delta function.
	
	Using this identification, we have two actions on $\Pi^{\vee}$. The first action is $L_n\left(\sigma^{\vee}\right)$. The second action is $\transpose{\Pi}$ given by $\innerproduct{v_{\Pi}}{\transpose{\Pi}\left(\left[X\right]\right) v_{\Pi^{\vee}}} = \innerproduct{\Pi\left(\left[\transpose{X}\right]\right) v_{\Pi}}{v_{\Pi^{\vee}}}$ for every $X \in \squareMatrix_n\left(\finiteField\right)$ and every $v_{\Pi} \in \Pi$ and $v_{\Pi^{\vee}} \in \Pi^{\vee}$. These actions are usually not identical. For $X \in \GL_n\left(\finiteField\right)$ they are related via the relation $\transpose{\Pi}\left(\left[X\right]\right) = L_n\left(\sigma^{\vee}\right)\left(\left[X^{\iota}\right]\right)$.
	
	\subsection{Godement--Jacquet functional equation for representations of $\coefficientsField\left[\squareMatrix_n\left(\finiteField\right)\right]$}
	In this section we provide a Godement--Jacquet functional equation for irreducible representations of $\coefficientsField\left[\squareMatrix_n\left(\finiteField\right)\right]$. We motivate this with a discussion regarding the equivariance property satisfied by the Godement--Jacquet zeta integrals.
	
	Let us first recall the situation in the group case. In this case, for any irreducible representation $\pi$ of $\GL_n\left(\finiteField\right)$, the Godement--Jacquet zeta integral produces a non-zero element of the following Hom-space
	\begin{displaymath}
		\Hom_{\GL_n\left(\finiteField\right) \times \GL_n\left(\finiteField\right)}\left(\left(\pi \otimes \pi^{\vee}\right) \otimes \Schwartz\left(\GL_n\left(\finiteField\right)\right), \coefficientsField\right).
	\end{displaymath}
	This element is the assignment
	$B_{\pi} \colon \pi \otimes \pi^{\vee} \otimes \Schwartz\left(\GL_n\left(\finiteField\right)\right) \to \coefficientsField$ defined by the formula
	\begin{displaymath}
		B_{\pi}\left(v_{\pi} \otimes v_{\pi^{\vee}}, f\right) = \sum_{g \in \GL_n\left(\finiteField\right)} f\left(g\right) \innerproduct{\pi\left(g\right) 	v_{\pi}}{v_{\pi^{\vee}}}.
	\end{displaymath}
	By elementary representation theory of finite groups, we know that the Hom-space in question is one-dimensional. The dual Godement--Jacquet zeta produces another non-zero element of this space $B^{\ast}_{\pi} \colon \pi \otimes \pi^{\vee} \otimes \Schwartz\left(\GL_n\left(\finiteField\right)\right) \to \coefficientsField$, given by
	\begin{displaymath}
		B^{\ast}_{\pi}\left(v_{\pi} \otimes 	v_{\pi^{\vee}}, f\right) = \sum_{g \in \GL_n\left(\finiteField\right)} \fourierTransform{\fieldCharacter}f\left(g\right) \innerproduct{v_{\pi}}{\pi^{\vee}\left(g\right) v_{\pi^{\vee}}}.
	\end{displaymath}
	Since $B_\pi$ and $B^{\ast}_{\pi}$ both lie in a one-dimensional space, they are proportional. This gives rise to the gamma factor $\gamma\left(\pi, \fieldCharacter\right)$.
	
	In the discussion above, in order to show that $B_{\pi}^{\ast}$ lies in the relevant Hom-space, one uses the property \begin{displaymath}
		\fourierTransform{\fieldCharacter} \rho\left(g, h\right) f = \rho\left(h, g\right) \fourierTransform{\fieldCharacter}f,
	\end{displaymath}
	for any $g,h \in \GL_n\left(\finiteField\right)$ and $f \in \Schwartz\left(\squareMatrix_n\left(\finiteField\right)\right)$. This property does not hold if one replaces $\rho$ with $\rho'$ and allows $g,h \in \squareMatrix_n\left(\finiteField\right)$.
	
	We move to discuss the analogous story for $\coefficientsField\left[\squareMatrix_n\left(\finiteField\right)\right]$. Let $\Pi = L_n\left(\sigma\right)$ where $\sigma$ is an irreducible representation of $\GL_r\left(\finiteField\right)$ for $0 \le r \le n$. Using $\zetaIntegral_{\Pi}$ we can define an element of the following one-dimensional Hom-space (see~\cite[Section 6]{HarmanSnowdenZelingher2025}):
	\begin{displaymath}
		\Hom_{\coefficientsField\left[\squareMatrix_n\left(\finiteField\right) \times \squareMatrix_n\left(\finiteField\right)\right]}\left(\Pi \otimes \transpose{\Pi}, \transpose{\left(\lambda', \coefficientsField\left[\squareMatrix_n\left(\finiteField\right)\right]\right)}\right)
	\end{displaymath}
	as follows. 
	
	Given $v_{\Pi} \in \Pi$, $v_{\Pi^{\vee}} \in \Pi^{\vee}$, and $f \in \Schwartz\left(\squareMatrix_n\left(\finiteField\right)\right)$, we define
	\begin{displaymath}
		B_{\Pi}\left(v_{\Pi}\otimes v_{\Pi^{\vee}}, \algebraIsomorphism\left(f\right)\right) = \innerproduct{\zetaIntegral_{\Pi}\left(f\right)v_{\Pi}}{v_{\Pi^{\vee}}} = \sum_{X \in \squareMatrix_n\left(\finiteField\right)} f\left(X\right) \innerproduct{\Pi\left(\left[X\right]\right) 	v_{\Pi}}{v_{\Pi^{\vee}}}.
	\end{displaymath}
	One can easily check that for any $A, B \in \squareMatrix_n\left(\finiteField\right)$
	\begin{displaymath}
		B_{\Pi}\left(\Pi\left(\left[A\right]\right)v_{\Pi}\otimes  \transpose{\Pi}\left(\left[B\right]\right) v_{\Pi^{\vee}}, \algebraIsomorphism\left(f\right)\right) = B_{\Pi}\left(v_{\Pi} \otimes v_{\Pi^{\vee}}, \lambda'\left(\left[\transpose{A}\right], \left[\transpose{B}\right]\right)\algebraIsomorphism\left(f\right)\right).
	\end{displaymath}
	Thus $B_{\Pi}$ defines an element in the relevant Hom-space by \begin{displaymath}
		v_{\Pi} \otimes v_{\Pi^{\vee}} \mapsto \left(\algebraIsomorphism\left(f\right) \mapsto \innerproduct{\zetaIntegral_{\Pi}\left(f\right)v_{\Pi}}{v_{\Pi^{\vee}}}\right).
	\end{displaymath}
	Note that in this case we cannot obtain from $\zetaIntegral_{\Pi}$ an element of \begin{displaymath}
		\Hom_{\coefficientsField\left[\squareMatrix_n\left(\finiteField\right) \times \squareMatrix_n\left(\finiteField\right)\right]}\left(\left(\Pi \otimes \Pi^{\vee}\right) \otimes \rho', \coefficientsField\right)
	\end{displaymath}
	nor of 
	\begin{displaymath}
		\Hom_{\coefficientsField\left[\squareMatrix_n\left(\finiteField\right) \times \squareMatrix_n\left(\finiteField\right)\right]}\left(\left(\Pi \otimes \Pi^{\vee}\right) \otimes \lambda', \coefficientsField\right).
	\end{displaymath}
	
	It is natural to ask what the companion of $B_{\Pi}$ should be. In \Cref{thm:godement-jacquet-for-monoid-algebras} we provide an answer to this question. This theorem tells us that a companion in this case is given by
	\begin{equation*}
		B^{\ast}_{\Pi}\left(v_{\Pi}\otimes v_{\Pi^{\vee}}, \algebraIsomorphism\left(f\right)\right) = \innerproduct{v_{\Pi}}{\naturalZetaIntegral_{\Pi^{\vee}}\left(\fourierTransform{\fieldCharacter}f\right)v_{\Pi^{\vee}}} = \sum_{X \in \squareMatrix_n\left(\finiteField\right)} \fourierTransform{\fieldCharacter}f\left(X\right) \innerproduct{v_{\Pi}}{\left(\Pi^{\vee}\right)^{\natural}\left(X\right)v_{\Pi^{\vee}}},
	\end{equation*}
	where $\left(\Pi^{\vee}\right)^{\natural}$ is defined using $\left(\sigma^{\vee}\right)^{\natural}$ from \Cref{subsec:regularized-godement-jacquet-functional-equation} in the next subsection.
	
	\subsubsection{The functional equation}\label{subsec:functional-equation-for-monoid-algebras}
	Let $\sigma$ be an irreducible representation of $\GL_r\left(\finiteField\right)$ for $0 \le r \le n$. Recall the definition of $\sigma^{\natural}$ from \Cref{subsec:regularized-godement-jacquet-functional-equation}. We use $\sigma^{\natural}$ to define a map $L_n\left(\sigma\right)^{\natural} \colon \squareMatrix_n\left(\finiteField\right) \to \EndomorphismRing_{\coefficientsField}\left(L_n\left(\sigma\right)\right)$ as follows. Recall that from \Cref{subsec:representations-of-m-n-f-q} we have that $L_n\left(\sigma\right) = \mathrm{V}_{\sigma} \otimes \coefficientsField^{N\left(r\right)}$. Let $A \in \squareMatrix_n\left(\finiteField\right)$. For $r$-dimensional linear subspaces $V$ and $W$ of $\finiteField^n$, let
	\begin{displaymath}
		\sigma^{\natural}_{W,V}\left(A\right) = \begin{dcases}
			\sigma^{\natural}\left(\phi_W \circ A\restriction_{V} \circ \phi_V^{-1}\right) & \text{if } \ImageOf A \subset W, \\
			0 & \text{otherwise.}
		\end{dcases}
	\end{displaymath}
	We define $L_n\left(\sigma\right)^{\natural}\left(A\right)$ by setting for every $v_{\sigma} \in \sigma$ and every linear $r$-dimensional subspace $V \subset \finiteField^n$ \begin{displaymath}
		L_n\left(\sigma\right)^{\natural}\left(A\right) \left(v_{\sigma} \otimes b_V\right) = \sum_{W} \left(\sigma^{\natural}_{W,V}\left(A\right) v_{\sigma}\right) \otimes b_W,
	\end{displaymath}
	where $W$ goes over all the $r$-dimensional linear subspaces of $\finiteField^n$.
	
	\begin{remark}
		Note that by definition $L_n\left(\sigma\right)$ is supported on matrices of rank $\ge r$. In contrast, by definition $L_n\left(\sigma\right)^{\natural}$ is supported on matrices of rank $\le r$.
	\end{remark}
	
	We have the following regularized Godement--Jacquet functional equation for the monoid algebra $\coefficientsField\left[\squareMatrix_n\left(\finiteField\right)\right]$.
	
	\begin{theorem}\label{thm:godement-jacquet-for-monoid-algebras}
		For every irreducible representation $\sigma$ of $\GL_r\left(\finiteField\right)$, where $0 \le r \le n$, and every function $f \in \Schwartz\left(\squareMatrix_n\left(\finiteField\right)\right)$ let
		\begin{displaymath}
			\zetaIntegral_{L_n\left(\sigma\right)}\left(f\right) = \sum_{X \in \squareMatrix_n\left(\finiteField\right)} f\left(X\right) L_n\left(\sigma\right)\left(\left[X\right]\right),
		\end{displaymath}
		and let
		\begin{displaymath}
			\naturalZetaIntegral_{L_n\left(\sigma\right)}\left(f\right) = \sum_{X \in \squareMatrix_n\left(\finiteField\right)} f\left(X\right) L_n\left(\sigma\right)^{\natural}\left(X\right).
		\end{displaymath}
		Then we have that $q^{\frac{\left(n-r\right)^2}{2}} \GaussSumScalar{\sigma}{\fieldCharacter} \naturalZetaIntegral_{L_n\left(\sigma\right)}\left(f\right)$ is the dual map of $\zetaIntegral_{L_n\left(\sigma^{\vee}\right)}\left(\fourierTransform{\fieldCharacter}f\right)$. Equivalently, for every pair of $r$-dimensional subspaces $V, V' \subset \finiteField^n$, every $v_{\sigma} \in \sigma$, and every $v_{\sigma^{\vee}} \in \sigma^{\vee}$, we have
		\begin{equation}\label{eq:explicit-duality-of-zeta-integrals}
			\innerproduct{v_{\sigma} \otimes b_V}{\zetaIntegral_{L_n\left(\sigma^{\vee}\right) }\left(\fourierTransform{\fieldCharacter}f\right)\left(v_{\sigma^{\vee}} \otimes b_{V'}\right)} = q^{\frac{\left(n-r\right)^2}{2}} \GaussSumScalar{\sigma}{\fieldCharacter} \innerproduct{\naturalZetaIntegral_{L_n\left(\sigma\right)}\left(f\right) \left(v_{\sigma} \otimes b_{V}\right)}{v_{\sigma^{\vee}} \otimes b_{V'}}.
		\end{equation}
	\end{theorem}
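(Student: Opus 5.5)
The plan is to prove \eqref{eq:explicit-duality-of-zeta-integrals} by computing both sides explicitly as sums over $h \in \GL_r\left(\finiteField\right)$ of $\innerproduct{\sigma\left(h\right) v_{\sigma}}{v_{\sigma^{\vee}}}$, and then matching the coefficients. The matching comes from an orthogonality computation for additive characters over an affine subspace of $\squareMatrix_n\left(\finiteField\right)$.

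\textbf{Left-hand side.} By the definition of $L_n\left(\sigma^{\vee}\right)$ through $\Psi_r$, the operator $L_n\left(\sigma^{\vee}\right)\left(\left[X\right]\right)$ sends $v_{\sigma^{\vee}} \otimes b_{V'}$ to zero unless $X\restriction_{V'}$ is injective. In that case it sends it to $\sigma^{\vee}\left(\phi_{X\left(V'\right)} \circ X\restriction_{V'} \circ \phi_{V'}^{-1}\right) v_{\sigma^{\vee}} \otimes b_{X\left(V'\right)}$. Pairing with $v_{\sigma} \otimes b_V$ forces $X\left(V'\right) = V$.

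Write $g = \phi_V X\restriction_{V'} \phi_{V'}^{-1}$ and $h = g^{-1}$. Then $\innerproduct{v_\sigma}{\sigma^{\vee}\left(g\right) v_{\sigma^{\vee}}} = \innerproduct{\sigma\left(h\right)v_{\sigma}}{v_{\sigma^{\vee}}}$. Grouping the $X$ according to $h$, the left-hand side becomes
\begin{displaymath}
\sum_{h \in \GL_r\left(\finiteField\right)} \innerproduct{\sigma\left(h\right)v_{\sigma}}{v_{\sigma^{\vee}}} \sum_{X \in X_h + K} \fourierTransform{\fieldCharacter}f\left(X\right).
\end{displaymath}
Here $K = \left\{X : X\restriction_{V'} = 0\right\}$, which has $q^{n\left(n-r\right)}$ elements, and $X_h$ is any matrix with $X_h\restriction_{V'} = \phi_V^{-1} h^{-1} \phi_{V'}$.

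\textbf{The key step.} Expanding $\fourierTransform{\fieldCharacter}f$ and summing the character over the coset gives
\begin{displaymath}
\sum_{X \in X_h+K} \fieldCharacter\left(\trace\left(XY\right)\right) = q^{n\left(n-r\right)} \fieldCharacter\left(\trace\left(X_hY\right)\right) \mathbf{1}_{Y \in K^{\perp}}.
\end{displaymath}
The first thing to check is that $K^{\perp} = \left\{Y : \ImageOf Y \subset V'\right\}$; this is a dimension count together with the evident inclusion. For such $Y$, cyclicity of the trace gives
\begin{displaymath}
\trace\left(X_h Y\right) = \trace\left(Y\restriction_V \circ \phi_V^{-1}h^{-1}\phi_{V'}\right) = \trace\left(\phi_{V'} Y\restriction_V \phi_V^{-1} h^{-1}\right),
\end{displaymath}
since $Y \circ X_h\restriction_{V'}$ is an endomorphism of $V'$ that only sees $Y\restriction_V$. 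So the left-hand side equals
\begin{displaymath}
q^{-\frac{n^2}{2}+n\left(n-r\right)} \sum_{h} \innerproduct{\sigma\left(h\right)v_{\sigma}}{v_{\sigma^{\vee}}} \sum_{\ImageOf Y \subset V'} f\left(Y\right) \fieldCharacter\left(\trace\left(\phi_{V'} Y\restriction_V \phi_V^{-1} h^{-1}\right)\right).
\end{displaymath}

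\textbf{Right-hand side.} By the definition of $L_n\left(\sigma\right)^{\natural}$, pairing with $b_{V'}$ selects $W = V'$. This restricts the sum to those $X$ with $\ImageOf X \subset V'$, and each such $X$ contributes $\innerproduct{\sigma^{\natural}\left(\phi_{V'} X\restriction_V \phi_V^{-1}\right)v_\sigma}{v_{\sigma^{\vee}}}$. Now substitute \eqref{eq:expression-of-pi-natural-with-gauss-sums} for $\GL_r$; this is legitimate because $\GaussSumScalar{\sigma}{\fieldCharacter} \neq 0$ by \Cref{rem:kondo-gauss-sum-is-non-zero}. The Gauss sum then cancels, and the right-hand side becomes $q^{\frac{(n-r)^2}{2} - \frac{r^2}{2}}$ times exactly the same double sum as above.

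\textbf{Conclusion and main obstacle.} The exponents agree, since $-\frac{n^2}{2} + n\left(n-r\right) = \frac{n^2 - 2nr}{2} = \frac{(n-r)^2 - r^2}{2}$, so the two sides coincide. The main point needing care is the bookkeeping in the key step: the identification of $K^{\perp}$, and the trace identity that converts $\trace\left(X_hY\right)$ into a trace on $\finiteField^r$, with consistent use of $\phi_V$ and $\phi_{V'}$ and of $h$ versus $h^{-1}$. The equivalence with the "dual map" formulation is immediate from the definition of the pairing between $L_n\left(\sigma\right)$ and $L_n\left(\sigma^{\vee}\right)$.
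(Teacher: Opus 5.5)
Your proposal is correct and follows essentially the paper's argument. The paper reduces to $f = \delta_A$ and evaluates the same additive-character sum over the fiber $\{X : X\restriction_{V'} \text{ fixed}\}$, using an explicit splitting $\finiteField^n = V' \oplus U$ where you use the annihilator count $K^{\perp} = \{Y : \ImageOf Y \subset V'\}$. It gets the same vanishing unless $\ImageOf A \subset V'$, and it identifies the remaining $\GL_r$-sum with $\sigma^{\natural}$ via \eqref{eq:expression-of-pi-natural-with-gauss-sums}, exactly as you do.
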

	\begin{remark}
		Substituting $f = \delta_A$ in \Cref{thm:godement-jacquet-for-monoid-algebras}, we see that $L_n\left(\sigma\right)^{\natural}$ is related to $L_n\left(\sigma^{\vee}\right)$ via Fourier transform. To be precise, $L_n\left(\sigma\right)^{\natural}\left(A\right)$ is the dual linear map corresponding to the linear map $L_n\left(\sigma^{\vee}\right) \to L_n\left(\sigma^{\vee}\right)$ given by
		\begin{displaymath}
			q^{-\frac{\left(n-r\right)^2}{2}} \GaussSumScalar{\sigma}{\fieldCharacter}^{-1} \cdot q^{-\frac{n^2}{2}} \sum_{X \in \squareMatrix_n\left(\finiteField\right)} L_n\left(\sigma^{\vee}\right)\left(\left[X\right]\right) \fieldCharacter\left(\trace\left(A X\right)\right).
		\end{displaymath}
	\end{remark}
	\begin{proof}[Proof of \Cref{thm:godement-jacquet-for-monoid-algebras}]
		It suffices to prove this for $f = \delta_{A}$ for a matrix $A \in \squareMatrix_n\left(\finiteField\right)$. In this case 
		\begin{displaymath}
			\zetaIntegral_{L_n\left(\sigma^{\vee}\right)}\left(\fourierTransform{\fieldCharacter}\delta_A\right) = q^{-\frac{n^2}{2}} \sum_{X \in \squareMatrix_n\left(\finiteField\right)} \fieldCharacter\left(\trace\left(AX\right)\right) L_n\left(\sigma^{\vee}\right)\left(\left[X\right]\right).
		\end{displaymath}
		Let $V'$ be a $r$-dimensional linear subspace of $\finiteField^n$ and let $v_{\sigma^{\vee}} \in \sigma^{\vee}$. We have
		\begin{equation*}
			\begin{split}
				&\sum_{X \in \squareMatrix_n\left(\finiteField\right)} \fieldCharacter\left(\trace\left(AX\right)\right) L_n\left(\sigma^{\vee}\right)\left(\left[X\right]\right)\left(v_{\sigma^{\vee}} \otimes b_{V'}\right) \\
				&= \sum_{W}  \left(\sum_{\substack{X \in \squareMatrix_n\left(\finiteField\right)\\
						X V' = W}} \fieldCharacter\left(\trace\left(AX\right)\right) \sigma^{\vee}\left(\phi_W \circ X \restriction_{V'} \circ \phi_{V'}^{-1}\right)v_{\sigma^{\vee}} \right) \otimes b_W,
			\end{split}
		\end{equation*}
		where $W$ goes over all $r$-dimensional linear subspaces of $\finiteField^n$.
		
		Let us consider the inner sum
		\begin{displaymath}
			\sum_{\substack{X \in \squareMatrix_n\left(\finiteField\right)\\
					X V' = W}} \fieldCharacter\left(\trace\left(AX\right)\right) \sigma^{\vee}\left(\phi_W \circ X \restriction_{V'} \circ \phi_{V'}^{-1}\right)v_{\sigma^{\vee}}.
		\end{displaymath}
		We may rewrite this as
		\begin{equation}\label{eq:sum-over-B}
			\sum_{B \in \GL_r\left(\finiteField\right)}
			\sum_{\substack{X \in \squareMatrix_n\left(\finiteField\right) \\
					X \restriction_{V'} = \phi_W^{-1} \circ B \circ \phi_{V'}}} \fieldCharacter\left(\trace\left(AX\right)\right) \sigma^{\vee}\left(B\right)v_{\sigma^{\vee}}.
		\end{equation}
		Fix a decomposition $\finiteField^n = V' \oplus U$ and a linear isomorphism $T_{V',W} \colon \finiteField^n \to \finiteField^n$ such that $T_{V',W} \restriction_{V'} = \phi_W^{-1} \circ \phi_{V'}$. For linear maps $S_{V'} \colon V' \to \finiteField^n$ and $S_U \colon U \to \finiteField^n$ let $j\left(S_{V'},S_U\right)$ be the linear map whose restriction to $V'$ is $S_{V'}$ and whose restriction to $U$ is $S_U$. Then we may parameterize $X$ in the sum by $X = T_{V',W} \circ j\left(\phi_{V'}^{-1} \circ B \circ \phi_{V'}, S_U\right)$, where $S_U \colon U \to \finiteField^n$ is a linear map. Writing $S_U = S_{U,V'} \oplus S_{U,U}$, where $S_{U,V'} \colon U \to V'$ and $S_{U,U} \colon U \to U$ we see that the sum over $S_{U,U}$ vanishes unless $\ImageOf\left(A \circ T_{V',W} \restriction_{U} \right) \subset V'$ and that the sum over $S_{U,V'}$ vanishes unless $\ImageOf\left(A \circ T_{V',W} \restriction_{V'} \right) \subset V'$. These two conditions combined are equivalent to $\ImageOf\left(A \circ T_{V',W}\right) \subset V'$ which in turn is equivalent to $\ImageOf\left(A\right) \subset V'$. Thus if $\ImageOf A \not\subset V'$ we have $\zetaIntegral_{L_n\left(\sigma^{\vee}\right)}\left(\fourierTransform{\fieldCharacter}\delta_A\right)\left(v_{\sigma^{\vee}} \otimes b_{V'}\right) = 0$. Notice that by definition we also have in this case that $\innerproduct{\naturalZetaIntegral_{L_n\left(\sigma\right)}\left(\delta_A\right)\left(v_{\sigma} \otimes b_V\right)}{v_{\sigma^{\vee}} \otimes b_{V'}}=0$ and thus the equality \eqref{eq:explicit-duality-of-zeta-integrals} holds.
		
		Assume from now on that $\ImageOf A \subset V'$. From the discussion above we have that \eqref{eq:sum-over-B} equals
		\begin{displaymath}
			q^{n\left(n-r\right)} \sum_{B \in \GL_r\left(\finiteField\right)}
			\fieldCharacter\left(\trace\left(A \circ j\left(\phi_W^{-1} \circ B \circ \phi_{V'}, 0\right)\right)\right)  \sigma^{\vee}\left(B\right)v_{\sigma^{\vee}}.
		\end{displaymath}
		Since $\ImageOf A \subset V'$, we may rewrite the last formula as
		\begin{equation*}
			q^{n\left(n-r\right)} \sum_{B \in \GL_r\left(\finiteField\right)}
			\fieldCharacter\left(\trace\left(\phi_{V'} \circ A \restriction_W \circ \phi_W^{-1} \circ B\right)\right)  \sigma^{\vee}\left(B\right)v_{\sigma^{\vee}}.
		\end{equation*}
		Thus we have
		\begin{equation}\label{eq:sum-over-B-kondo-gauss-sum-form}
			\begin{split}
				&\zetaIntegral_{L_n\left(\sigma^{\vee}\right)}\left(\fourierTransform{\fieldCharacter}\delta_A\right)\left(v_{\sigma^{\vee}} \otimes b_{V'}\right)
				\\
				=& q^{-\frac{n^2}{2} + n\left(n-r\right)} \sum_{W} \sum_{B \in \GL_r\left(\finiteField\right)}
				\fieldCharacter\left(\trace\left(\phi_{V'} \circ A \restriction_W \circ \phi_W^{-1} \circ B\right)\right)  \sigma^{\vee}\left(B\right)v_{\sigma^{\vee}} \otimes b_W
			\end{split}
		\end{equation}
		It follows from \eqref{eq:sum-over-B-kondo-gauss-sum-form} that we have
		\begin{equation}\label{eq:pairing-with-GJ-of-fourier-transform}
			\begin{split}
				& \innerproduct{v_{\sigma} \otimes b_V}{\zetaIntegral_{L_n\left(\sigma^{\vee}\right)}\left(\fourierTransform{\fieldCharacter}\delta_A\right)\left(v_{\sigma^{\vee}} \otimes b_{V'}\right)} \\
				=& q^{-\frac{n^2}{2} + n\left(n-r\right)} \sum_{B \in \GL_r\left(\finiteField\right)}
				\innerproduct{\fieldCharacter\left(\trace\left(\phi_{V'} \circ A \restriction_V \circ \phi_V^{-1} \circ B\right)\right) \sigma\left(B^{-1}\right) v_{\sigma}}{v_{\sigma^{\vee}}}.
			\end{split}
		\end{equation}
		By \eqref{eq:expression-of-pi-natural-with-gauss-sums} evaluated with $\pi=\sigma$ and $n=r$, we have that \eqref{eq:pairing-with-GJ-of-fourier-transform} equals
		\begin{displaymath}
			q^{\frac{r^2-n^2}{2} + n\left(n-r\right)} \GaussSumScalar{\sigma}{\fieldCharacter} \innerproduct{L_n\left(\sigma\right)^{\natural}\left(A\right) \left(v_{\sigma} \otimes b_V\right)}{v_{\sigma^{\vee}} \otimes b_{V'}},
		\end{displaymath}
		which equals
		\begin{displaymath}
			q^{\frac{\left(n-r\right)^2}{2}} \GaussSumScalar{\sigma}{\fieldCharacter} \innerproduct{\naturalZetaIntegral_{L_n\left(\sigma\right)}\left(\delta_A\right) \left(v_{\sigma} \otimes b_V\right)}{v_{\sigma^{\vee}} \otimes b_{V'}},
		\end{displaymath}
		as required.
	\end{proof}	
			
	\subsection{Central idempotents}
	Throughout this section we assume that $\coefficientsField$ has characteristic not dividing $\sizeof{\GL_n\left(\finiteField\right)}$. Our goal is to write down an explicit formula for the primitive central idempotent in the monoid algebra $\coefficientsField\left[\squareMatrix_n\left(\finiteField\right)\right]$ corresponding to $L_n\left(\sigma\right)$ for an irreducible representation $\sigma$ of $\GL_r\left(\finiteField\right)$.
	
	\subsubsection{$\GL_n\left(\finiteField\right)$-invariant functions}
	Let $n \ge 0$. Recall that a \emph{class function} of $\GL_n\left(\finiteField\right)$ is a function $f \colon \GL_n\left(\finiteField\right) \to \coefficientsField$ that is invariant under conjugation.
	
	By a \emph{$\GL_n\left(\finiteField\right)$-invariant function} we mean a function $f \colon \squareMatrix_n\left(\finiteField\right) \to \coefficientsField$ such that $f\left(g^{-1} A g\right) = f\left(A\right)$ for every $g \in \GL_n\left(\finiteField\right)$ and $A \in \squareMatrix_n\left(\finiteField\right)$.
	
	\begin{remark}
		We do not use the term ``class function'' for $\squareMatrix_n\left(\finiteField\right)$ because it is defined differently in the literature, see~\cite[Chapter 7]{Steinberg2016}.
	\end{remark}
	
	If $V$ is an $n$-dimensional $\finiteField$-linear space and $f$ is a class function of $\GL_n\left(\finiteField\right)$ (respectively, a $\GL_n\left(\finiteField\right)$-invariant function) and $T \in \GL\left(V\right)$ (respectively, $T \in \EndomorphismRing\left(V\right)$) then we define $f\left(T\right)$ by choosing a basis for $V$ and evaluating $f$ at the matrix representing $T$ with respect to this basis. This definition does not depend on the chosen basis because $f$ is invariant under $\GL_n\left(\finiteField\right)$ conjugation.
	
	The class functions of $\GL_n\left(\finiteField\right)$ we will be mostly interested in are irreducible characters, i.e., functions of the form $g \mapsto \trace \pi\left(g\right)$ where $\pi$ is an irreducible representation of $\GL_n\left(\finiteField\right)$. The $\GL_n\left(\finiteField\right)$-invariant functions we will mostly be interested in are non-abelian Gauss sums, which we define next.
	
	\subsubsection{Degenerate non-abelian Gauss sums}\label{subsubsec:degenerate-non-abelian-gauss-sums}
	Given an irreducible representation $\pi$ of $\GL_n\left(\finiteField\right)$, and $A \in \squareMatrix_n\left(\finiteField\right)$, we define
	\begin{displaymath}
		\GaussSumScalarFunction{\pi}{\fieldCharacter}\left(A\right) = q^{-\frac{n^2}{2}} \sum_{g \in \GL_n\left(\finiteField\right)} \fieldCharacter\left(\trace\left(A g^{-1}\right)\right) \trace \pi\left(g\right).
	\end{displaymath}
	From \eqref{eq:expression-of-pi-natural-with-gauss-sums}, we have that
	\begin{equation}\label{eq:gauss-sum-function-in-terms-of-pi-natural}
		\GaussSumScalarFunction{\pi}{\fieldCharacter}\left(A\right) = \GaussSumScalar{\pi}{\fieldCharacter} \trace \pi^{\natural}\left(A\right).
	\end{equation}
	It follows from \eqref{eq:equivariance-of-pi-natural} and \eqref{eq:gauss-sum-function-in-terms-of-pi-natural} that $\GaussSumScalarFunction{\pi}{\fieldCharacter}$ is a $\GL_n\left(\finiteField\right)$-invariant function.
	
	\subsubsection{Central primitive idempotents}\label{subsubsec:central-primitive-idempotents}
	
	Notice that the center of $\bigoplus_{r=0}^n \squareMatrix_{N\left(r\right)}\left(\coefficientsField\left[\GL_r\left(\finiteField\right)\right]\right)$ is $\bigoplus_{r=0}^n Z\left(\coefficientsField\left[\GL_r\left(\finiteField\right)\right]\right) \IdentityMatrix{N\left(r\right)}$, where $Z\left(\coefficientsField\left[\GL_r\left(\finiteField\right)\right]\right)$ is the center of $\coefficientsField\left[\GL_r\left(\finiteField\right)\right]$.
	
	Let $\sigma$ be an irreducible representation of $\GL_r\left(\finiteField\right)$. Recall that the (central) idempotent in $\coefficientsField\left[\GL_r\left(\finiteField\right)\right]$ corresponding to $\sigma$ is given by the formula
	\begin{equation}\label{eq:idempotent-for-irreducible-representation-of-group}
		e_{\sigma} = \frac{\dim \sigma}{\sizeof{\GL_r\left(\finiteField\right)}} \sum_{g \in \GL_r\left(\finiteField\right)} \trace \sigma^{\vee}\left(g\right) \left[g\right].
	\end{equation}
	
	Given the description above of $\squareMatrix_{N\left(r\right)}\left(\coefficientsField\left[\GL_r\left(\finiteField\right)\right]\right)$-modules in terms of representations of $\GL_r\left(\finiteField\right)$, it is easy to see that the central element
	\begin{displaymath}
	f_{\sigma} = \diag\left(e_{\sigma},\dots,e_{\sigma}\right) \in \squareMatrix_{N\left(r\right)}\left(\coefficientsField\left[\GL_r\left(\finiteField\right)\right]\right)
	\end{displaymath}
	is idempotent. It is also clear that it acts by zero on $L_n\left(\sigma'\right)$ for every irreducible representation $\sigma'$ of $\GL_{r'}\left(\finiteField\right)$ that is not isomorphic to $\sigma$, where $0 \le r' \le n$ (if $r' \ne r$ we regard $\sigma$ and $\sigma'$ as non-isomorphic). It is also easy to see that $f_{\sigma}$ acts as the identity on $L_n\left(\sigma\right)$. Thus $f_{\sigma} \in \bigoplus_{j=0}^n \squareMatrix_{N\left(j\right)}\left(\coefficientsField\left[\GL_j\left(\finiteField\right)\right]\right)$ is the primitive central idempotent we are seeking.
	
	Tracing back the isomorphisms discussed in \Cref{subsec:overview-of-rep-theory-of-mn-fq} we see that $f_{\sigma}$ corresponds to the following element of the monoid algebra $\coefficientsField\left[\squareMatrix_n\left(\finiteField\right)\right]$:
	\begin{displaymath}
	e_{L_{n}\left(\sigma\right)} = \frac{\dim \sigma}{\sizeof{\GL_r\left(\finiteField\right)}} \sum_{\substack{V \subset \finiteField^n\\
	\dim V = r}} \sum_{g \in \GL_r\left(\finiteField\right)} \trace \sigma^{\vee}\left(g\right) \left[\phi_{V}^{-1} \circ g \circ \phi'_V\right] \eta_V,
	\end{displaymath}
	where $\phi'_V$ is any extension of $\phi_V$ to a linear map $\phi'_V \colon \finiteField^n \to \finiteField^r$. Expanding the definition of $\eta_V$, this becomes
	\begin{equation*}
		\begin{split}
			e_{L_{n}\left(\sigma\right)} =& \frac{\dim \sigma}{\sizeof{\GL_r\left(\finiteField\right)}} q^{-r\left(n-r\right)} \mu\left(r\right)^{-1} \sum_{\substack{V \subset \finiteField^n\\
					\dim V = r}} \sum_{g \in \GL_r\left(\finiteField\right)} \\
				& \times \sum_{\substack{S \colon \finiteField^n \to \finiteField^n\\
					S \text{ semi-idempotent}\\
					\ImageOf S \subset V}} \trace \sigma^{\vee}\left(g\right) \mu\left(\srank S\right) \left[\phi_{V}^{-1} \circ g \circ \phi_V \circ S\right].
		\end{split}
	\end{equation*}
	Changing variables $g \mapsto \phi_V^{-1} \circ g \circ \phi_V$ we have that
	\begin{equation*}
		\begin{split}
			e_{L_{n}\left(\sigma\right)} =& \frac{\dim \sigma}{\sizeof{\GL_r\left(\finiteField\right)}} q^{-r\left(n-r\right)} \mu\left(r\right)^{-1} \sum_{\substack{V \subset \finiteField^n\\
					\dim V = r}} \sum_{g \in \GL\left(V\right)} \sum_{\substack{S \colon \finiteField^n \to \finiteField^n\\
					S \text{ semi-idempotent}\\
					\ImageOf S \subset V}} \trace \sigma^{\vee}\left(g\right) \mu\left(\srank S\right) \left[g \circ S\right].
		\end{split}
	\end{equation*}
	
	Fix a linear subspace $V \subset \finiteField^n$ of dimension $r$ and consider the inner sum
	\begin{equation}\label{eq:inner-idempotent-sum}
		\sum_{g \in \GL\left(V\right)} \sum_{\substack{S \colon \finiteField^n \to \finiteField^n\\
				S \text{ semi-idempotent}\\
				\ImageOf S \subset V}} \trace \sigma^{\vee}\left(g\right) \mu\left(\srank S\right) \left[g \circ S\right].
	\end{equation}
	Let $A \colon \finiteField^n \to \finiteField^n$ be a linear map with $\ImageOf A \subset V$, and let $\delta_A$ be the indicator function of $A$.
	
	It follows that the coefficient of $\left[A\right]$ in \eqref{eq:inner-idempotent-sum} is given by
	\begin{equation}\label{eq:coefficient-of-A-in-e-L-n-sigma}
		\sum_{g \in \GL\left(V\right)} \sum_{\substack{S \colon \finiteField^n \to \finiteField^n\\
				S \text{ semi-idempotent}\\
			\ImageOf S \subset V}} \trace \sigma^{\vee}\left(g\right) \mu\left(\srank S\right) \delta_A\left(g \circ S\right).
	\end{equation}
	Notice that $S \colon \finiteField^n \to \finiteField^n$ with $\ImageOf S \subset V$ is semi-idempotent if and only if $S \restriction_V$ is semi-idempotent. Fix a decomposition $\finiteField^n = V \oplus U$. Given a semi-idempotent element $m \in \EndomorphismRing\left(V\right)$ and a linear map $T \colon U \to V$ let $j\left(m, T\right)$ be the linear map whose restriction to $V$ is $m$ and whose restriction to $U$ is $T$. Then the assignment $\left(m,T\right) \mapsto j\left(m, T\right)$ is a bijection between pairs $\left(m, T\right)$ as above and semi-idempotent linear maps $S \colon \finiteField^n \to \finiteField^n$ with $\ImageOf S \subset V$. Moreover, we have that $\srank j\left(m,T\right) = \srank m$. Thus \eqref{eq:coefficient-of-A-in-e-L-n-sigma} may be rewritten as
	\begin{equation}\label{eq:coefficient-of-A-in-e-L-n-sigma-with-j}
	\sum_{g \in \GL\left(V\right)} \sum_{\substack{m \colon V \to V\\
			m \text{ semi-idempotent}}} \sum_{T \colon U \to V} \trace \sigma^{\vee}\left(g\right) \mu\left(\srank m\right) \delta_A\left(g \circ j\left(m,T\right)\right).
	\end{equation}	
	Notice now that $g \circ j\left(m,T\right) = A$ if and only if $g \circ T = A \restriction_{U}$ and $g \circ m = A \restriction_{V}$. Thus we have that $T = g^{-1} \circ A\restriction_{U}$ is  determined by $A$ and $g$. It follows that \eqref{eq:coefficient-of-A-in-e-L-n-sigma-with-j} can be rewritten as
	\begin{equation}\label{eq:coefficient-of-A-in-e-L-n-sigma-with-j-over-V}
	\sum_{g \in \GL\left(V\right)} \sum_{\substack{m \colon V \to V\\
			m \text{ semi-idempotent}}} \trace \sigma^{\vee}\left(g\right) \mu\left(\srank m\right) \delta_{A \restriction_V}\left(g \circ m\right).
	\end{equation}		
	
	Identifying $\GL\left(V\right)$ with $\GL_r\left(\finiteField\right)$ and applying the regularized Godement--Jacquet functional equation (\Cref{thm:godement-jacquet-for-arbitrary-functions}) for $\GL_r\left(\finiteField\right)$ with $f = \delta_{A \restriction_V}$, we have that \eqref{eq:coefficient-of-A-in-e-L-n-sigma-with-j-over-V} equals 
	\begin{displaymath}
	\frac{\mu\left(r\right)}{\GaussSumScalar{\sigma^{\vee}}{\fieldCharacter}} \cdot q^{-\frac{r^2}{2}} \sum_{g \in \GL\left(V\right)} \fieldCharacter\left(\trace\left(A \restriction_{V} \circ g^{-1}\right)\right) \trace \sigma^{\vee}\left(g\right).
	\end{displaymath}
	Recalling the definition of $\GaussSumScalarFunction{\sigma^{\vee}}{\fieldCharacter}$, we conclude the following result.
	\begin{theorem}\label{thm:ln-sigma-idempotent-in-terms-of-gauss-sums}
		Let $\sigma$ be an irreducible representation of $\GL_r\left(\finiteField\right)$ for $0 \le r \le n$. Then the primitive central idempotent $e_{L_n\left(\sigma\right)} \in \coefficientsField\left[\squareMatrix_n\left(\finiteField\right)\right]$ corresponding to the simple module $L_n\left(\sigma\right)$ of $\coefficientsField\left[\squareMatrix_n\left(\finiteField\right)\right]$ is given by
		\begin{displaymath}
		e_{L_n\left(\sigma\right)} = \frac{\dim \sigma}{\sizeof{\GL_r\left(\finiteField\right)}} q^{-r\left(n-r\right)} \sum_{A \in \squareMatrix_n\left(\finiteField\right)} \sum_{\substack{V \subset \finiteField^n\\
				\dim V = r\\
				\ImageOf A \subset V}} \frac{\GaussSumScalarFunction{\sigma^{\vee}}{\fieldCharacter}\left(A \restriction_{V} \right)}{\GaussSumScalar{\sigma^{\vee}}{\fieldCharacter}} \left[A\right].
				\end{displaymath}
	\end{theorem}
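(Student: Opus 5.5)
The computation preceding the statement already contains almost the whole argument, so the plan is to assemble the pieces and track constants. First I will justify the starting formula for $e_{L_n\left(\sigma\right)}$. The central idempotent $f_\sigma = \diag\left(e_\sigma,\dots,e_\sigma\right)$ corresponds, under the ring isomorphism $\Psi$ composed with the identification $\coefficientsField\left[\mathcal{G}_{n,r}\right] \cong \squareMatrix_{N\left(r\right)}\left(\coefficientsField\left[\GL_r\left(\finiteField\right)\right]\right)$, to the element $\sum_{V} \frac{\dim\sigma}{\sizeof{\GL_r\left(\finiteField\right)}} \sum_{g} \trace\sigma^{\vee}\left(g\right) \left[\phi_V^{-1}\circ g\circ\phi_V\right]$ of $\coefficientsField\left[\mathcal{G}_{n,r}\right]$. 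By \Cref{thm:inverse-image-of-Psi}, its preimage is obtained by replacing each $\left[\phi_V^{-1} g\phi_V\right]$ with $\left[T'\right]\eta_V$ for any extension $T'$. Expanding $\eta_V$ and reparametrizing $g\in\GL\left(V\right)$ gives the displayed triple sum, which carries the prefactor $q^{-r\left(n-r\right)}\mu\left(r\right)^{-1}$. Here I need $\left[T'\right]\left[S\right] = \left[T'S\right]$ together with the fact that $T'S = gS$ depends only on $T'\restriction_V$, which holds because $\ImageOf S\subset V$.

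Next, for fixed $V$ and fixed $A$ with $\ImageOf A\subset V$, I will extract the coefficient of $\left[A\right]$. Any $A$ appearing in the sum indexed by $V$ must satisfy $\ImageOf A = \ImageOf\left(gS\right)\subset V$, which explains why the final sum runs over pairs $\left(A,V\right)$ with $\ImageOf A\subset V$. Fixing a complement $U$, I use the bijection $S = j\left(m,T\right)$ between semi-idempotent $S$ with image in $V$ and pairs $(m,T)$. This requires checking that $S$ is semi-idempotent iff $m = S\restriction_V$ is, and that $\srank S = \srank m$. Both follow from $S^{k} = j\left(m^{k}, m^{k-1}T\right)$: once $m^{N+1}=m^N$ we get $S^{N+2}=S^{N+1}$, and $S^\infty$ has image $\ImageOf m^\infty$. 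The constraint $g\circ j\left(m,T\right)=A$ then determines $T$ uniquely, so the coefficient collapses to the sum \eqref{eq:coefficient-of-A-in-e-L-n-sigma-with-j-over-V}.

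The key step is recognizing this sum as $\mu\left(r\right)$ times the trace of $\left(\sigma^{\vee}\right)^{\natural}\left(A\restriction_V\right)$, up to the choice of basis of $V$. I will then apply \eqref{eq:expression-of-pi-natural-with-gauss-sums}, equivalently \eqref{eq:gauss-sum-function-in-terms-of-pi-natural}, for $\GL_r$ with $\pi=\sigma^\vee$. This turns it into $\mu\left(r\right)\GaussSumScalarFunction{\sigma^{\vee}}{\fieldCharacter}\left(A\restriction_V\right)/\GaussSumScalar{\sigma^{\vee}}{\fieldCharacter}$. The division is legitimate by \Cref{rem:kondo-gauss-sum-is-non-zero}, and the value is basis-independent because $\GaussSumScalarFunction{\sigma^{\vee}}{\fieldCharacter}$ is $\GL_r$-invariant. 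The $\mu\left(r\right)$ cancels the $\mu\left(r\right)^{-1}$ in the prefactor, and summing over $V$ and $A$ gives the stated formula.

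The main obstacle is bookkeeping rather than any conceptual point. The delicate parts are the compatibility of $\Psi^{-1}$ with the matrix-algebra identification, in particular which $\phi_V$ conjugation and which extension are used, and the verification that the bijection $S\leftrightarrow\left(m,T\right)$ preserves stable rank. I would check the latter first via the block-triangular form of $S$ relative to $V\oplus U$.
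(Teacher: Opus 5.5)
Your proposal is correct and follows the paper's argument step for step. You lift $f_\sigma$ via \Cref{thm:inverse-image-of-Psi}, expand $\eta_V$, use the bijection $S = j(m,T)$ to reduce the coefficient of $[A]$ to \eqref{eq:coefficient-of-A-in-e-L-n-sigma-with-j-over-V}, and evaluate it through $\mu(r)\trace (\sigma^{\vee})^{\natural}(A\restriction_V)$ and \eqref{eq:gauss-sum-function-in-terms-of-pi-natural}, which only repackages the paper's direct use of \Cref{thm:godement-jacquet-for-arbitrary-functions} with $f = \delta_{A\restriction_V}$; your check $S^k = j(m^k, m^{k-1}T)$ supplies the detail on semi-idempotence and stable rank that the paper merely asserts.
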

	
	By \eqref{eq:expression-of-pi-natural-with-gauss-sums} and the definition of $L_n\left(\sigma\right)^{\natural}$, see \Cref{subsec:functional-equation-for-monoid-algebras}, we have that
	\begin{displaymath}
		\trace L_n\left(\sigma^{\vee}\right)^{\natural}\left(A\right) = \sum_{\substack{V \subset \finiteField^n\\
				\dim V = r\\
				\ImageOf A \subset V}} \frac{\GaussSumScalarFunction{\sigma^{\vee}}{\fieldCharacter}\left(A \restriction_V\right)}{\GaussSumScalar{\sigma^{\vee}}{\fieldCharacter}}.
	\end{displaymath}
	Note that
	\begin{equation}\label{eq:dimension-of-l-n-sigma}
		\dim L_{n}\left(\sigma\right) = \dim \sigma \cdot \frac{\sizeof{\GL_n\left(\finiteField\right)}}{q^{r \left(n-r\right)} \sizeof{\GL_r\left(\finiteField\right)} \sizeof{\GL_{n-r}\left(\finiteField\right)}}.
	\end{equation}
	From these observations we have the following formula for $e_{L_n\left(\sigma\right)}$, which is similar to the formula for $e_{\sigma}$ (see \eqref{eq:idempotent-for-irreducible-representation-of-group}).
	\begin{corollary}
		Keep the notation as in \Cref{thm:ln-sigma-idempotent-in-terms-of-gauss-sums}. Then we have
		\begin{displaymath}
			e_{L_n\left(\sigma\right)} = \frac{\dim L_n\left(\sigma\right)}{\sizeof{\GL_n\left(\finiteField\right)}} \sizeof{\GL_{n-r}\left(\finiteField\right)} \sum_{A \in \squareMatrix_n\left(\finiteField\right)} \trace L_n\left(\sigma^{\vee}\right)^{\natural}\left(A\right) \left[A\right].
		\end{displaymath}
	\end{corollary}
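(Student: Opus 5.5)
The plan is to substitute the two displayed facts immediately preceding the corollary into \Cref{thm:ln-sigma-idempotent-in-terms-of-gauss-sums} and then check that the scalar prefactors agree.

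First, use the identity
\begin{displaymath}
\trace L_n\left(\sigma^{\vee}\right)^{\natural}\left(A\right) = \sum_{\substack{V \subset \finiteField^n\\ \dim V = r\\ \ImageOf A \subset V}} \frac{\GaussSumScalarFunction{\sigma^{\vee}}{\fieldCharacter}\left(A \restriction_V\right)}{\GaussSumScalar{\sigma^{\vee}}{\fieldCharacter}}.
\end{displaymath}
This identity comes from the definition of $L_n\left(\sigma^{\vee}\right)^{\natural}$ in \Cref{subsec:functional-equation-for-monoid-algebras}. Only the diagonal blocks $W = V$ contribute to the trace, and such a block is nonzero exactly when $\ImageOf A \subset V$. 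In that case the block equals $\left(\sigma^{\vee}\right)^{\natural}\left(\phi_V \circ A\restriction_V \circ \phi_V^{-1}\right)$. Its trace equals $\GaussSumScalarFunction{\sigma^{\vee}}{\fieldCharacter}\left(A\restriction_V\right)/\GaussSumScalar{\sigma^{\vee}}{\fieldCharacter}$ by \eqref{eq:gauss-sum-function-in-terms-of-pi-natural}. Here I use that $\GaussSumScalarFunction{\sigma^{\vee}}{\fieldCharacter}$ is $\GL_r\left(\finiteField\right)$-invariant, so its value at $A \restriction_V \in \EndomorphismRing\left(V\right)$ is well defined and independent of $\phi_V$.

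Substituting this into \Cref{thm:ln-sigma-idempotent-in-terms-of-gauss-sums} gives
\begin{displaymath}
e_{L_n\left(\sigma\right)} = \frac{\dim \sigma}{\sizeof{\GL_r\left(\finiteField\right)}} q^{-r\left(n-r\right)} \sum_{A} \trace L_n\left(\sigma^{\vee}\right)^{\natural}\left(A\right)\left[A\right].
\end{displaymath}
It then remains to rewrite the scalar prefactor using \eqref{eq:dimension-of-l-n-sigma}. That formula says
\begin{displaymath}
\frac{\dim L_n\left(\sigma\right)}{\sizeof{\GL_n\left(\finiteField\right)}} \sizeof{\GL_{n-r}\left(\finiteField\right)} = \frac{\dim\sigma}{q^{r\left(n-r\right)}\sizeof{\GL_r\left(\finiteField\right)}},
\end{displaymath}
which is exactly the prefactor above. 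This completes the argument.

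The only substantive step is justifying \eqref{eq:dimension-of-l-n-sigma}. Since $\dim L_n\left(\sigma\right) = \dim\sigma \cdot N\left(r\right)$, this reduces to the standard count
\begin{displaymath}
\binom{n}{r}_q = \frac{\sizeof{\GL_n\left(\finiteField\right)}}{q^{r(n-r)}\sizeof{\GL_r\left(\finiteField\right)}\sizeof{\GL_{n-r}\left(\finiteField\right)}}.
\end{displaymath}
To prove it, note that $\GL_n\left(\finiteField\right)$ acts transitively on $r$-dimensional subspaces, and the stabilizer of a subspace is a maximal parabolic subgroup of order $q^{r(n-r)}\sizeof{\GL_r\left(\finiteField\right)}\sizeof{\GL_{n-r}\left(\finiteField\right)}$. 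Because we assume the characteristic of $\coefficientsField$ does not divide $\sizeof{\GL_n\left(\finiteField\right)}$, all these orders are invertible in $\coefficientsField$, so the manipulation of prefactors is legitimate.
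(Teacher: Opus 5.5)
Your proposal is correct and follows the paper's argument. The paper likewise derives the trace formula for $L_n\left(\sigma^{\vee}\right)^{\natural}\left(A\right)$ from the definition and \eqref{eq:expression-of-pi-natural-with-gauss-sums}, substitutes it into \Cref{thm:ln-sigma-idempotent-in-terms-of-gauss-sums}, and matches prefactors via \eqref{eq:dimension-of-l-n-sigma}. Your version adds the details the paper leaves implicit: the diagonal-block trace, the $\GL_r\left(\finiteField\right)$-invariance, and the orbit--stabilizer count. One small inaccuracy: the diagonal block is zero when $\ImageOf A \not\subset V$, but it need not be nonzero when $\ImageOf A \subset V$; this does not affect the computation.
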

	
	From these observations combined with \Cref{thm:godement-jacquet-for-monoid-algebras} we obtain the following corollary.
	\begin{corollary}\label{cor:trace-godement-jacquet-for-m-n-fq}
		Let $\sigma$ be an irreducible representation of $\GL_r\left(\finiteField\right)$ for $0 \le r \le n$. Define $\mathcal{X}_{L_n\left(\sigma^{\vee}\right)} \colon \squareMatrix_n\left(\finiteField\right) \to \coefficientsField$ by the equality
		\begin{displaymath}
			e_{L_n\left(\sigma\right)} = \sum_{X \in \squareMatrix_n\left(\finiteField\right)} \mathcal{X}_{L_n\left(\sigma^{\vee}\right)}\left(X\right) \left[X\right].
		\end{displaymath}
		Then for every function $f \in \Schwartz\left(\squareMatrix_n\left(\finiteField\right)\right)$ we have the equality
		\begin{displaymath}
			\sum_{A \in \squareMatrix_n\left(\finiteField\right)} f\left(A\right) \mathcal{X}_{L_n\left(\sigma^{\vee}\right)}\left(A\right) = q^{-\frac{\left(n-r\right)^2}{2}}  \frac{\dim L_n\left(\sigma\right)}{\GaussSumScalar{\sigma^{\vee}}{\fieldCharacter}} \frac{\sizeof{\GL_{n-r}\left(\finiteField\right)}}{\sizeof{\GL_n\left(\finiteField\right)}} \sum_{A \in \squareMatrix_n\left(\finiteField\right)} \fourierTransform{\fieldCharacter}f\left(A\right) \trace L_{n}\left(\sigma\right)\left(A\right).
		\end{displaymath}
	\end{corollary}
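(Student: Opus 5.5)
The plan is to start from the previous corollary, which identifies $\mathcal{X}_{L_n\left(\sigma^{\vee}\right)}$ explicitly:
\begin{displaymath}
\mathcal{X}_{L_n\left(\sigma^{\vee}\right)}\left(A\right) = \frac{\dim L_n\left(\sigma\right)}{\sizeof{\GL_n\left(\finiteField\right)}} \sizeof{\GL_{n-r}\left(\finiteField\right)} \trace L_n\left(\sigma^{\vee}\right)^{\natural}\left(A\right).
\end{displaymath}
Consequently the left-hand side of the claimed identity equals this constant times $\trace \naturalZetaIntegral_{L_n\left(\sigma^{\vee}\right)}\left(f\right)$, by linearity of trace and the definition of $\naturalZetaIntegral$ in \Cref{thm:godement-jacquet-for-monoid-algebras}. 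It therefore suffices to prove
\begin{displaymath}
\trace \naturalZetaIntegral_{L_n\left(\sigma^{\vee}\right)}\left(f\right) = q^{-\frac{\left(n-r\right)^2}{2}} \GaussSumScalar{\sigma^{\vee}}{\fieldCharacter}^{-1} \trace \zetaIntegral_{L_n\left(\sigma\right)}\left(\fourierTransform{\fieldCharacter} f\right).
\end{displaymath}
Here $\zetaIntegral_{L_n\left(\sigma\right)}\left(\fourierTransform{\fieldCharacter}f\right) = \sum_A \fourierTransform{\fieldCharacter}f\left(A\right) L_n\left(\sigma\right)\left(\left[A\right]\right)$, whose trace is exactly the sum on the right-hand side of the statement.

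To get this, I apply \Cref{thm:godement-jacquet-for-monoid-algebras} with $\sigma$ replaced by $\sigma^{\vee}$, which is again an irreducible representation of $\GL_r\left(\finiteField\right)$. Since $\left(\sigma^{\vee}\right)^{\vee} \cong \sigma$, the theorem says that $q^{\frac{\left(n-r\right)^2}{2}} \GaussSumScalar{\sigma^{\vee}}{\fieldCharacter} \naturalZetaIntegral_{L_n\left(\sigma^{\vee}\right)}\left(f\right)$ is the dual (transpose) of $\zetaIntegral_{L_n\left(\sigma\right)}\left(\fourierTransform{\fieldCharacter}f\right)$. This is with respect to the pairing between $L_n\left(\sigma^{\vee}\right)$ and $L_n\left(\sigma\right)$ given by $\innerproduct{v\otimes b_V}{w\otimes b_{V'}} = \innerproduct{v}{w}\delta_{V,V'}$. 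That pairing is perfect, so a linear map and its transpose have the same trace. Taking traces of both sides gives the displayed identity. Dividing by the Gauss sum is legitimate by \Cref{rem:kondo-gauss-sum-is-non-zero}.

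Multiplying by the constant $\frac{\dim L_n\left(\sigma\right)}{\sizeof{\GL_n\left(\finiteField\right)}}\sizeof{\GL_{n-r}\left(\finiteField\right)}$ then yields the statement. The only point needing care is this bookkeeping in applying the theorem to $\sigma^{\vee}$ rather than $\sigma$. One must check that the identification $\left(\sigma^{\vee}\right)^{\vee}=\sigma$ matches the pairing used in \eqref{eq:explicit-duality-of-zeta-integrals}, and that the Gauss sum appearing is $\GaussSumScalar{\sigma^{\vee}}{\fieldCharacter}$. Both follow by simply swapping the roles of $v_\sigma$ and $v_{\sigma^\vee}$ in \eqref{eq:explicit-duality-of-zeta-integrals}.
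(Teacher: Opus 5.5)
Your proposal is correct and follows the paper's own route. The paper likewise reads off $\mathcal{X}_{L_n(\sigma^{\vee})}$ from the preceding corollary as a multiple of $\trace L_n(\sigma^{\vee})^{\natural}$, then applies \Cref{thm:godement-jacquet-for-monoid-algebras} with $\sigma^{\vee}$ in place of $\sigma$ and takes traces, using that a map and its dual have the same trace.
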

	
	\subsubsection{Rewriting using parabolic induction}\label{subsubsec:rewrite-using-parabolic-induction}
	Suppose that $n = n_1 + n_2$ and that $f_i \colon \squareMatrix_{n_i}\left(\finiteField\right) \to \coefficientsField$ is a $\GL_{n_i}\left(\finiteField\right)$-invariant function for $i=1,2$. We may define a $\GL_n\left(\finiteField\right)$-invariant function $f_1 \odot f_2 \colon \squareMatrix_n\left(\finiteField\right) \to \coefficientsField$ by the formula
	\begin{displaymath}
	\left(f_1 \odot f_2\right)\left(A\right) = \sum_{\substack{V \subset \finiteField^n\\
	\dim V = n_1 \\
	A V \subset V}} f_1\left(A \restriction_V \right) f_2\left(A_{\finiteField^n \slash V} \right).
	\end{displaymath}
	Here $A_{\finiteField^n \slash V} \colon \finiteField^n \slash V \to \finiteField^n \slash V$ is the linear map given by $A_{\finiteField^n \slash V}\left(x + V\right) = Ax + V$ for $x \in \finiteField^n$. This operation is analogous to the usual parabolic induction operation defined for class functions of finite general linear groups.
	
	Let $\delta_{0_{n-r}} \colon \squareMatrix_{n-r}\left(\finiteField\right) \to \coefficientsField$ be the indicator function of $0_{n-r}$. Let $\sigma$ be an irreducible representation of $\GL_r\left(\finiteField\right)$ for $0 \le r \le n$. Recall that $\GaussSumScalarFunction{\sigma^{\vee}}{\fieldCharacter}$ is a $\GL_r\left(\finiteField\right)$-invariant function. Thus \Cref{thm:ln-sigma-idempotent-in-terms-of-gauss-sums} can be restated as
	\begin{displaymath}
	e_{L_n\left(\sigma\right)} = \frac{\dim \sigma}{\sizeof{\GL_r\left(\finiteField\right)}} q^{-r\left(n-r\right)} \GaussSumScalar{\sigma^{\vee}}{\fieldCharacter}^{-1} \sum_{A \in \squareMatrix_n\left(\finiteField\right)} \left(\GaussSumScalarFunction{\sigma^{\vee}}{\fieldCharacter} \odot \delta_{0_{n-r}}\right) \left(A\right) \left[A\right].
	\end{displaymath}
	Combining with \eqref{eq:dimension-of-l-n-sigma}, this allows us to rewrite $e_{L_n\left(\sigma\right)}$ in the following form.
	\begin{theorem}\label{thm:ln-sigma-idempotent-in-terms-of-parabolic-induction-and-gauss-sums}
		Let $\sigma$ be an irreducible representation of $\GL_r\left(\finiteField\right)$ for $0 \le r \le n$. Then the primitive central idempotent $e_{L_n\left(\sigma\right)} \in \coefficientsField\left[\squareMatrix_n\left(\finiteField\right)\right]$ corresponding to the simple module $L_n\left(\sigma\right)$ of $\coefficientsField\left[\squareMatrix_n\left(\finiteField\right)\right]$ is given by
		\begin{displaymath}
		e_{L_n\left(\sigma\right)} = \frac{\dim L_n\left(\sigma\right)}{\sizeof{\GL_n\left(\finiteField\right)}} \sizeof{\GL_{n-r}\left(\finiteField\right)} \cdot \GaussSumScalar{\sigma^{\vee}}{\fieldCharacter}^{-1} \sum_{A \in \squareMatrix_n\left(\finiteField\right)} \left(\GaussSumScalarFunction{\sigma^{\vee}}{\fieldCharacter} \odot \delta_{0_{n-r}}\right) \left(A\right) \left[A\right].
		\end{displaymath}
	\end{theorem}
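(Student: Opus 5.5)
The plan is to derive \Cref{thm:ln-sigma-idempotent-in-terms-of-parabolic-induction-and-gauss-sums} from \Cref{thm:ln-sigma-idempotent-in-terms-of-gauss-sums}, which is already established. The argument has two parts. First, identify the inner sum over $V$ with the parabolically induced function $\GaussSumScalarFunction{\sigma^{\vee}}{\fieldCharacter} \odot \delta_{0_{n-r}}$. Second, rewrite the scalar prefactor using the dimension formula \eqref{eq:dimension-of-l-n-sigma}.

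For the first part, fix $A \in \squareMatrix_n\left(\finiteField\right)$ and apply the definition of $\odot$ with $n_1 = r$, $n_2 = n-r$, $f_1 = \GaussSumScalarFunction{\sigma^{\vee}}{\fieldCharacter}$ and $f_2 = \delta_{0_{n-r}}$. This gives a sum over $r$-dimensional subspaces $V$ with $AV \subset V$. Each term is $\GaussSumScalarFunction{\sigma^{\vee}}{\fieldCharacter}\left(A\restriction_V\right) \delta_{0_{n-r}}\left(A_{\finiteField^n/V}\right)$.

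The key observation is that $A_{\finiteField^n/V} = 0$ if and only if $Ax \in V$ for all $x$, that is, $\ImageOf A \subset V$. Moreover, $\ImageOf A \subset V$ already forces $AV \subset V$, so the stability condition becomes redundant. Hence
\[
\left(\GaussSumScalarFunction{\sigma^{\vee}}{\fieldCharacter} \odot \delta_{0_{n-r}}\right)\left(A\right) = \sum_{\substack{V \subset \finiteField^n\\ \dim V = r\\ \ImageOf A \subset V}} \GaussSumScalarFunction{\sigma^{\vee}}{\fieldCharacter}\left(A\restriction_V\right),
\]
which is exactly the inner sum in \Cref{thm:ln-sigma-idempotent-in-terms-of-gauss-sums}. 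Here $\GaussSumScalarFunction{\sigma^{\vee}}{\fieldCharacter}\left(A\restriction_V\right)$ is well defined via any basis of $V$, because $\GaussSumScalarFunction{\sigma^{\vee}}{\fieldCharacter}$ is $\GL_r\left(\finiteField\right)$-invariant, as noted after \eqref{eq:gauss-sum-function-in-terms-of-pi-natural}. One should also check that the definition of $\odot$ requires $f_1, f_2$ to be $\GL$-invariant. This holds: $\delta_{0_{n-r}}$ is trivially invariant, and the Gauss sum function is invariant as just noted.

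For the second part, rearrange \eqref{eq:dimension-of-l-n-sigma} to get
\[
\frac{\dim \sigma}{\sizeof{\GL_r\left(\finiteField\right)}} q^{-r\left(n-r\right)} = \frac{\dim L_n\left(\sigma\right)}{\sizeof{\GL_n\left(\finiteField\right)}} \sizeof{\GL_{n-r}\left(\finiteField\right)}.
\]
This step is legitimate because the characteristic assumption makes $\sizeof{\GL_n\left(\finiteField\right)}$ invertible in $\coefficientsField$. Substituting this into the restated formula for $e_{L_n\left(\sigma\right)}$ gives the claim.

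The only genuinely non-formal ingredient is \eqref{eq:dimension-of-l-n-sigma}, which the paper states without proof. I would verify it as follows. By construction, $\dim L_n\left(\sigma\right) = \dim \sigma \cdot N\left(r\right)$. The Gaussian binomial satisfies
\[
N\left(r\right) = \binom{n}{r}_q = \frac{\sizeof{\GL_n\left(\finiteField\right)}}{q^{r\left(n-r\right)}\sizeof{\GL_r\left(\finiteField\right)}\sizeof{\GL_{n-r}\left(\finiteField\right)}}.
\]
This follows by counting: $\GL_n\left(\finiteField\right)$ acts transitively on $r$-dimensional subspaces, and the stabilizer is a maximal parabolic subgroup of order $q^{r(n-r)}\sizeof{\GL_r\left(\finiteField\right)}\sizeof{\GL_{n-r}\left(\finiteField\right)}$.

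I do not expect any real obstacle. The main point of care is the equivalence between $\delta_{0_{n-r}}\left(A_{\finiteField^n/V}\right) \neq 0$ and $\ImageOf A \subset V$.
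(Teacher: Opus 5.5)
Your proposal is correct and matches the paper's argument. The paper likewise restates \Cref{thm:ln-sigma-idempotent-in-terms-of-gauss-sums} by recognizing the inner sum over $V$ as $\left(\GaussSumScalarFunction{\sigma^{\vee}}{\fieldCharacter} \odot \delta_{0_{n-r}}\right)(A)$, tacitly using that $A_{\finiteField^n/V}=0$ exactly when $\ImageOf A \subset V$, and then rewrites the prefactor via \eqref{eq:dimension-of-l-n-sigma}; your orbit--stabilizer check of that dimension formula through $\dim L_n(\sigma) = \dim \sigma \cdot \binom{n}{r}_q$ correctly fills in a step the paper states without proof.
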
	
	
	\begin{remark}
		Let $\chi_{\sigma} \colon \squareMatrix_r\left(\finiteField\right) \to \coefficientsField$ be the character of $\sigma$, extended to zero outside of $\GL_r\left(\finiteField\right)$, i.e.,
		\begin{displaymath}
			\chi_{\sigma}\left(A\right) = \begin{dcases}
				\trace \sigma\left(A\right) & \text{if }A \in \GL_r\left(\finiteField\right),\\
				0 & \text{otherwise}.
			\end{dcases}
		\end{displaymath}
		By~\cite[Proposition 5.1]{HarmanSnowdenZelingher2025} and the definition of parabolic induction we have that for every $A \in \squareMatrix_n\left(\finiteField\right)$,
		\begin{displaymath}
			\trace L_n\left(\sigma\right)\left(A\right) = \left(\chi_{\sigma} \odot 1_{\squareMatrix_{n-r}}\right)\left(A\right),
		\end{displaymath}
		where $1_{\squareMatrix_{n-r}}$ is the constant function $1$ on $\squareMatrix_{n-r}\left(\finiteField\right)$. By~\cite{GorodetskyZelingher2026} we have that if $f_i$ is a $\GL_{n_i}\left(\finiteField\right)$-invariant function for $i=1,2$, then
		\begin{displaymath}
			\fourierTransform{\fieldCharacter}\left(f_1 \odot f_2\right) = \fourierTransform{\fieldCharacter}\left(f_1\right) \odot \fourierTransform{\fieldCharacter}\left(f_2\right).
		\end{displaymath}
		Since $\fourierTransform{\fieldCharacter}\left(1_{\squareMatrix_{n-r}}\right) = q^{\frac{\left(n-r\right)^2}{2}} \delta_{0_{n-r}}$ and since $\fourierTransform{\fieldCharacter}\left(\chi_{\sigma}\right) = \GaussSumScalarFunction{\sigma^{\vee}}{\fieldCharacter}$ we have
		\begin{equation}\label{eq:formula-for-coefficients-as-fourier-transform}
			\GaussSumScalarFunction{\sigma^{\vee}}{\fieldCharacter} \odot \delta_{0_{n-r}} = q^{-\frac{\left(n-r\right)^2}{2}} \fourierTransform{\fieldCharacter}\left(\trace L_n\left(\sigma\right)\right).
		\end{equation}
		\Cref{cor:trace-godement-jacquet-for-m-n-fq} can be deduced from \eqref{eq:formula-for-coefficients-as-fourier-transform} using the Parseval identity~\cite[Below Equation (2.2)]{KurinczukMatringeSecherre2026}.
	\end{remark}
	
	\subsubsection{Explicit formulas for Gauss sums}\label{subsubsec:explicit-formulas-for-gauss-sums}
	
	We finish this section with some explicit formulas for the Gauss sums appearing in \Cref{thm:ln-sigma-idempotent-in-terms-of-gauss-sums, thm:ln-sigma-idempotent-in-terms-of-parabolic-induction-and-gauss-sums}.
	
	First, from \eqref{eq:expression-of-pi-natural-with-gauss-sums} and from the fact that $\left(\sigma^{\vee}\right)^{\natural}\left(A\right) = \sigma^{\vee}\left(A\right)$ for $A \in \GL_r\left(\finiteField\right)$, we have that if $\sigma$ is an irreducible representation of $\GL_r\left(\finiteField\right)$ and if $A \in \GL_r\left(\finiteField\right)$ then
	\begin{displaymath}
	\frac{\GaussSumScalarFunction{\sigma^{\vee}}{\fieldCharacter}\left(A\right)}{\GaussSumScalar{\sigma^{\vee}}{\fieldCharacter}} = \trace \sigma^{\vee}\left(A\right).
	\end{displaymath}
	Thus the assignment $\squareMatrix_r\left(\finiteField\right) \to \coefficientsField$ given by $A \mapsto \GaussSumScalar{\sigma^{\vee}}{\fieldCharacter}^{-1} \GaussSumScalarFunction{\sigma^{\vee}}{\fieldCharacter}\left(A\right)$ can be thought of as ``meromorphic continuation'' of the class function $\trace \sigma^{\vee}\left(g\right)$ of $\GL_r\left(\finiteField\right)$.
	
	Next, we have by \Cref{prop:strict-tensor-co-rank-and-vanishing} that if $\sigma$ is pure (see below for the definition) then $\GaussSumScalarFunction{\sigma^{\vee}}{\fieldCharacter}$ vanishes at singular elements.
	
	Finally, we recall our most general result from~\cite{GorodetskyZelingher2026}. See also~\cite{Kawanaka1985, Lusztig1992, Letellier2005} for related results. To state it, we first recall some terminology. Recall that an irreducible representation of $\GL_r\left(\finiteField\right)$ is called \emph{unipotent} if it is a subrepresentation of the parabolically induced representation $1^{\odot r} = 1 \odot \dots \odot 1$, where $1$ is the trivial character of $\GL_1\left(\finiteField\right)$. Irreducible unipotent representations of $\GL_r\left(\finiteField\right)$ are in bijection with partitions of $r$. Given a partition $\lambda$ of $r$ we let $1^{\lambda}$ denote the unipotent irreducible representation of $\GL_r\left(\finiteField\right)$ corresponding to $\lambda$ with the convention that $1^{\left(r\right)}$ is the Steinberg representation and $1^{(1^r)}$ is the trivial representation. If $\tau$ is an irreducible representation of $\GL_r\left(\finiteField\right)$, we say that $\tau$ is \emph{pure} if $\tau$ is not a subrepresentation of a parabolically induced representation of the form $1 \odot \tau'$ where $\tau'$ is a representation of $\GL_{r-1}\left(\finiteField\right)$. Every irreducible representation of $\GL_r\left(\finiteField\right)$ can be realized as a parabolically induced representation of the form $\tau' \odot 1^{\lambda}$ where $\lambda \vdash r'$ and $\tau'$ is a pure irreducible representation of $\GL_{r-r'}\left(\finiteField\right)$ for $0 \le r' \le r$.
	
	For a positive integer $m$ and $\xi \in \finiteField$ let $J_{(m)}\left(\xi\right)$ be the Jordan block of size $m$ with eigenvalue $\xi$, i.e.,
	\begin{displaymath}
	J_{(m)}\left(\xi\right) = \begin{pmatrix}
		\xi & 1\\
		& \xi & \ddots\\
		& & \ddots & 1\\
		& & & \xi
	\end{pmatrix} \in \squareMatrix_m\left(\finiteField\right).
	\end{displaymath}
	If $\mu = \left(m_1,\dots, m_{\ell}\right)$ is a partition with $m_1 \ge \dots \ge m_{\ell} > 0$ where $\ell \ge 0$, then we set $J_{\mu}\left(\xi\right)$ to be the Jordan matrix with eigenvalue $\xi$ corresponding to $\mu$, i.e.,
	\begin{displaymath}
	 J_{\mu}\left(\xi\right) = \diag\left(J_{(m_1)}\left(\xi\right),\dots,J_{(m_{\ell})}\left(\xi\right)\right).
	\end{displaymath}
	We let $a_{\mu}\left(q\right)$ denote the size of the centralizer of $J_{\mu}\left(\xi\right)$ in $\GL_{\sizeof{\mu}}\left(\finiteField\right)$. It is independent of $\xi$.
	
	Given $r \ge 0$, consider the modified Kostka--Foulkes matrix $\left(\tilde{\kostkaFoulkes}_{\lambda,\mu}\left(q\right)\right)_{\lambda,\mu \vdash r}$, where $\tilde{\kostkaFoulkes}_{\lambda,\mu}\left(q\right)$ is the modified Kostka--Foulkes polynomial, also given by $\tilde{\kostkaFoulkes}_{\lambda,\mu}\left(q\right) = \trace 1^{\lambda^{\top}}\left(J_{\mu}\left(1\right)\right)$, where $\lambda^{\top}$ is the conjugate of $\lambda$. Let $\left(\coinvKostkaFoulkes_{\lambda, \mu}\left(q\right)\right)_{\lambda, \mu \vdash r}$ be the inverse of the modified Kostka--Foulkes matrix.
	
	For partitions $\lambda$, $\nu$ and $\rho$ such that $\lambda \vdash \sizeof{\nu} + \sizeof{\rho}$, let $c_{\rho, \nu}^{\lambda}$ be the Littlewood--Richardson coefficients, defined by the rule
	\begin{displaymath}
	s_{\rho} \cdot s_{\nu} = \sum_{\lambda \vdash \sizeof{\rho} + \sizeof{\nu}} c_{\rho, \nu}^{\lambda} s_{\lambda},
	\end{displaymath}
	where $s_{\lambda}$ is the Schur function corresponding to $\lambda$.
	
	We are now ready to state the main result of~\cite{GorodetskyZelingher2026}.
	
	\begin{theorem}
		Let $\sigma_0$ be a pure irreducible representation of $\GL_s\left(\finiteField\right)$ for $0 \le s \le r$ and let $\sigma = \sigma_0 \odot 1^{\lambda}$ for $\lambda \vdash r-s$. Let $A = \diag\left(J_{\mu}\left(0\right), g\right)$ where $\mu$ is a partition of size $\le r$ and $g \in \GL_{r - \sizeof{\mu}}\left(\finiteField\right)$. Then $\GaussSumScalarFunction{\sigma}{\fieldCharacter}\left(A\right)$ vanishes unless $\sizeof{\lambda} \ge \sizeof{\mu}$, in which case
		\begin{displaymath}
		\GaussSumScalarFunction{\sigma}{\fieldCharacter}\left(A\right) = \left(-1\right)^{\sizeof{\mu}} a_{\mu}\left(q\right) \GaussSumScalar{\sigma}{\fieldCharacter} \sum_{\nu \vdash \sizeof{\lambda} - \sizeof{\mu}} \sum_{\rho \vdash \sizeof{\mu}} c_{\nu, \rho}^{\lambda} \coinvKostkaFoulkes_{\mu, \rho}\left(q\right) \trace\left(\sigma_0 \odot 1^{\nu}\right)\left(g\right).
		\end{displaymath}
	\end{theorem}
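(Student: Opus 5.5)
The plan is to express $\GaussSumScalarFunction{\sigma}{\fieldCharacter}$ as a Fourier transform of a character, and then use the compatibility of the Fourier transform with parabolic induction $\odot$ to separate the pure part from the unipotent part.

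\emph{Step 1: reduction to Fourier transforms of characters.} Writing $\chi_{\sigma}$ for the character of $\sigma$ extended by zero to $\squareMatrix_r\left(\finiteField\right)$, the definition of $\GaussSumScalarFunction{\sigma}{\fieldCharacter}$ gives, up to the normalization $q^{-\frac{r^2}{2}}$ and a $\sigma \leftrightarrow \sigma^{\vee}$ swap,
\[
\GaussSumScalarFunction{\sigma}{\fieldCharacter} = \fourierTransform{\fieldCharacter}\left(\chi_{\sigma^{\vee}}\right).
\]
For $\sigma = \sigma_0 \odot 1^{\lambda}$ the character of the parabolically induced representation, extended by zero, equals $\chi_{\sigma_0} \odot \chi_{1^{\lambda}}$. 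Both sides vanish at singular matrices, because a singular $A$ has a singular restriction or quotient for every invariant $V$. Using the multiplicativity $\fourierTransform{\fieldCharacter}\left(f_1 \odot f_2\right) = \fourierTransform{\fieldCharacter} f_1 \odot \fourierTransform{\fieldCharacter} f_2$, I obtain
\[
\GaussSumScalarFunction{\sigma}{\fieldCharacter} = \GaussSumScalarFunction{\sigma_0}{\fieldCharacter} \odot \GaussSumScalarFunction{1^{\lambda}}{\fieldCharacter}.
\]
Dualization is harmless here, since $\sigma_0^{\vee}$ is again pure and $1^{\lambda}$ is self-dual.

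\emph{Step 2: pure factor.} By \Cref{prop:strict-tensor-co-rank-and-vanishing}, $\GaussSumScalarFunction{\sigma_0}{\fieldCharacter}$ vanishes on singular matrices. So in evaluating the $\odot$ at $A = \diag\left(J_{\mu}\left(0\right), g\right)$, only $A$-invariant $s$-dimensional subspaces $V$ with $A\restriction_V$ invertible contribute. Such $V$ must lie inside the generalized eigenspaces of $g$ for nonzero eigenvalues. On those $V$ the factor $\GaussSumScalarFunction{\sigma_0}{\fieldCharacter}\left(A\restriction_V\right)$ equals $\GaussSumScalar{\sigma_0}{\fieldCharacter} \trace \sigma_0\left(A\restriction_V\right)$ (see \Cref{subsubsec:explicit-formulas-for-gauss-sums}). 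The quotient is then $\diag\left(J_{\mu}\left(0\right), g'\right)$, where $g'$ is the induced map on the quotient of the $g$-part. This reduces the theorem to the unipotent case $\sigma = 1^{\lambda}$, together with reassembling
\[
\sum_{V} \trace\sigma_0\left(g\restriction_V\right) \trace\left(1^{\nu}\right)\left(g_{/V}\right) = \trace\left(\sigma_0 \odot 1^{\nu}\right)\left(g\right).
\]
It also uses multiplicativity of Kondo Gauss sums under $\odot$: $\GaussSumScalar{\sigma}{\fieldCharacter} = \GaussSumScalar{\sigma_0}{\fieldCharacter} \GaussSumScalar{1^{\lambda}}{\fieldCharacter}$, by Kondo's formula.

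\emph{Step 3: unipotent case, the main obstacle.} I need $\GaussSumScalarFunction{1^{\lambda}}{\fieldCharacter}\left(\diag\left(J_{\mu}\left(0\right), g\right)\right)$. My plan is to invert the modified Kostka--Foulkes relation $\tilde{\kostkaFoulkes}_{\lambda,\mu}\left(q\right) = \trace 1^{\lambda^{\top}}\left(J_{\mu}\left(1\right)\right)$. This expresses $\chi_{1^{\lambda}}$, as a class function on unipotent-type elements, through functions of the form
\[
\left(\text{unipotent-class indicators}\right) \odot \left(\text{unipotent characters}\right).
\]
Their Fourier transforms are then computed by translation. Fourier transform intertwines the shift $X \mapsto X - I$ with multiplication by $\fieldCharacter\left(\trace\,\cdot\right)$, so the unipotent class $J_{\mu}\left(1\right)$ corresponds to the nilpotent class $J_{\mu}\left(0\right)$. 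The indicator of a nilpotent class contributes $a_{\mu}\left(q\right)$ and the sign $\left(-1\right)^{\sizeof{\mu}}$. The splitting $\lambda \to \left(\nu, \rho\right)$ via $c^{\lambda}_{\nu,\rho}$ comes from restricting $1^{\lambda}$ to a Levi $\GL_{\sizeof{\lambda}-\sizeof{\mu}} \times \GL_{\sizeof{\mu}}$, i.e., Harish-Chandra restriction of unipotent characters, which is governed by Littlewood--Richardson coefficients. The vanishing for $\sizeof{\mu} > \sizeof{\lambda}$ then follows because the nilpotent part would have to be absorbed in a block of size exceeding $\sizeof{\lambda}$, where only the pure factor, which vanishes on singular matrices, lives.

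I expect the bookkeeping in Step 3, and in particular matching the inverse Kostka matrix $\coinvKostkaFoulkes_{\mu,\rho}\left(q\right)$ with the Green-function expansion, to be the hardest part. I would first verify it for $\lambda = \left(1^k\right)$ and $\lambda = \left(k\right)$ against the direct formula for $\sigma^{\natural}$ via semi-idempotents from \Cref{thm:godement-jacquet-for-arbitrary-functions}.
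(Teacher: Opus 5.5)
The paper does not prove this theorem; it quotes it from \cite{GorodetskyZelingher2026}. So your argument has to stand on its own.

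\textbf{Steps 1 and 2 are sound.} The identity $\GaussSumScalarFunction{\sigma}{\fieldCharacter} = \fourierTransform{\fieldCharacter}\left(\chi_{\sigma^{\vee}}\right)$ holds exactly, since the normalization is built into $\fourierTransform{\fieldCharacter}$. The $\odot$-multiplicativity of $\fourierTransform{\fieldCharacter}$ then gives $\GaussSumScalarFunction{\sigma}{\fieldCharacter} = \GaussSumScalarFunction{\sigma_0}{\fieldCharacter} \odot \GaussSumScalarFunction{1^{\lambda}}{\fieldCharacter}$. Because $\GaussSumScalarFunction{\sigma_0}{\fieldCharacter}$ vanishes on singular matrices, only subspaces $V$ lying inside the $g$-block, which has dimension $r - \sizeof{\mu}$, contribute. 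This proves the vanishing when $\sizeof{\mu} > \sizeof{\lambda}$, since no $s$-dimensional $V$ fits. It also correctly reduces the theorem to the unipotent case $s = 0$.

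\textbf{Step 3 is a genuine gap.} It is the entire content of the theorem, and what you wrote there is a plan whose mechanisms do not work as stated. Two ingredients are missing.

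First, the $\left(\nu,\rho\right)$-splitting. Saying that Harish-Chandra restriction governs it is not an identity relating it to $\fourierTransform{\fieldCharacter}$. A correct argument runs as follows.
\begin{itemize}
\item Take $P = MU$ block upper triangular of type $\left(\sizeof{\mu}, \sizeof{\lambda} - \sizeof{\mu}\right)$, let $\mathfrak{u}$ be the block strictly upper triangular matrices, and set $A = \diag\left(J_{\mu}\left(0\right), h\right)$.
\item The spectra of $J_{\mu}\left(0\right)$ and $h$ are disjoint, so $b \mapsto bh - J_{\mu}\left(0\right) b$ is bijective and $\left\{uAu^{-1}\right\}_{u \in U} = A + \mathfrak{u}$.
\item Averaging the conjugation-invariant function $\fourierTransform{\fieldCharacter}\left(\chi_{1^{\lambda}}\right)$ over $A + \mathfrak{u}$ kills every $g \notin P$, and summing over $U$ produces the Jacquet module. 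The powers of $q$ cancel, giving
\[
\GaussSumScalarFunction{1^{\lambda}}{\fieldCharacter}\left(A\right) = \sum_{\nu, \rho} c^{\lambda}_{\nu,\rho}\, \GaussSumScalarFunction{1^{\rho}}{\fieldCharacter}\left(J_{\mu}\left(0\right)\right) \GaussSumScalar{1^{\nu}}{\fieldCharacter} \trace 1^{\nu}\left(h\right).
\]
\end{itemize}

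Second, and this is the real content, you need the nilpotent evaluation
\[
\GaussSumScalarFunction{1^{\rho}}{\fieldCharacter}\left(J_{\mu}\left(0\right)\right) = \left(-1\right)^{\sizeof{\mu}} a_{\mu}\left(q\right) \GaussSumScalar{1^{\rho}}{\fieldCharacter} \coinvKostkaFoulkes_{\mu,\rho}\left(q\right) \quad \text{for } \sizeof{\rho} = \sizeof{\mu}.
\]
Your route cannot produce this, for three reasons.
\begin{itemize}
\item Inverting $\tilde{\kostkaFoulkes}$ only describes unipotent characters on the unipotent set. But $\fourierTransform{\fieldCharacter}\left(\chi_{1^{\rho}}\right)\left(J_{\mu}\left(0\right)\right)$ depends on $\chi_{1^{\rho}}$ on all of $\GL_m\left(\finiteField\right)$.
\item An expansion of $\chi_{1^{\lambda}}$ into terms of the form (unipotent-class indicator) $\odot$ (unipotent character) exists only trivially. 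Every term with a non-empty unipotent factor vanishes on matrices without eigenvalue $1$, so you are forced back to $\chi_{1^{\lambda}}$ itself.
\item The translation identity moves the support of the function, not the evaluation point. It turns transforms of unipotent-class indicators into $\fieldCharacter\left(\trace \cdot\right)$ times transforms of nilpotent-orbit indicators. Their values at nilpotent points are Green-function quantities, not simply $a_{\mu}\left(q\right)$ times a sign.
\end{itemize}

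One route that does produce $\coinvKostkaFoulkes$:
\begin{itemize}
\item Expand $\chi_{1^{\rho}}$ in Deligne--Lusztig characters $R_{T_w}\left(1\right)$, extended by zero.
\item Show that $\fourierTransform{\fieldCharacter}\left(R_{T_w}\left(1\right)\right)\left(N\right) = \mathrm{sgn}\left(w\right) q^{-\frac{m}{2}} \sizeof{T_w} Q_{T_w}\left(I_m + N\right)$ for nilpotent $N$.
\item Pair against $\tilde{\kostkaFoulkes}_{\rho,\nu}\left(q\right) = \trace 1^{\rho^{\top}}\left(J_{\nu}\left(1\right)\right)$, using the Green-function orthogonality $\frac{1}{m!}\sum_{w} \sizeof{T_w} Q_{T_w}\left(u\right) Q_{T_w}\left(u'\right) = \sizeof{C_{\GL_m}\left(u\right)} \delta_{u \sim u'}$.
\end{itemize}
Some computation of this kind is indispensable, and it is absent from your proposal.
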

	Using this result and \Cref{thm:ln-sigma-idempotent-in-terms-of-parabolic-induction-and-gauss-sums} one can write down an explicit formula for the primitive central idempotent $e_{L_n\left(\sigma\right)}$.

	\appendix	
	
	\section{Relation to strict tensor co-rank}\label{sec:representations-with-strict-tensor-corank}
	In this appendix we give a criterion for $\pi^{\natural}\left(A\right)$ to be non-zero for singular $A \in \squareMatrix_n\left(\finiteField\right)$. For $0 \le k \le n$ set $A_{k} = \left(\begin{smallmatrix}
		\IdentityMatrix{n-k}\\
		& 0_k
	\end{smallmatrix}\right)$. Recall that any singular matrix $A$ can be written in the form $A = g_1 A_j g_2$, where $1 \le j \le n$ and $g_1, g_2 \in \GL_n\left(\finiteField\right)$. By \eqref{eq:equivariance-of-pi-natural}, for such $A$ we have that  $\pi^{\natural}\left(A\right) = 0$ if and only if $\pi^{\natural}\left(A_j\right) = 0$.
	
	For $0 \le k \le n$, define
	\begin{displaymath}
	H^{+}_{k} = \left\{\begin{pmatrix}
		\IdentityMatrix{n-k} & X\\
		& g
	\end{pmatrix} \mid X \in \Mat{\left(n-k\right)}{k}\left(\finiteField\right), g \in \GL_k\left(\finiteField\right)\right\}.
	\end{displaymath}
	Similarly, define
	\begin{displaymath}
		H^{-}_{k} = \left\{\begin{pmatrix}
			\IdentityMatrix{n-k} \\
			X & g
		\end{pmatrix} \mid X \in \Mat{k}{\left(n-k\right)}\left(\finiteField\right), g \in \GL_k\left(\finiteField\right)\right\}.
	\end{displaymath}

	If $\pi$ is an irreducible representation of $\GL_n\left(\finiteField\right)$, we say that $\pi$ has \emph{strict tensor co-rank $k$} if $0 \le k \le n$ is the largest integer such that $\pi$ admits a non-zero $H^+_k$-fixed vector. 
	
	\begin{proposition}\label{prop:strict-tensor-co-rank-and-vanishing}
		Suppose that $\pi$ has strict tensor co-rank $r$. Then $\pi^{\natural}\left(A\right) = 0$ for any $A \in \squareMatrix_n\left(\finiteField\right)$ with $\rank A < n-r$.
	\end{proposition}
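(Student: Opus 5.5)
We reduce, using the equivariance \eqref{eq:equivariance-of-pi-natural}, to the matrices $A_k$ with $k > r$. Any $A$ with $\rank A < n-r$ can be written as $A = g_1 A_k g_2$ with $k = n - \rank A > r$, so it suffices to show $\pi^{\natural}\left(A_k\right) = 0$ for every $k > r$.

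The key observation is that $A_k$ absorbs $H^{+}_k$ on the right. For
\begin{displaymath}
	h = \begin{pmatrix} \IdentityMatrix{n-k} & X \\ & g \end{pmatrix} \in H^{+}_k
\end{displaymath}
we have $A_k h = \left(\begin{smallmatrix} \IdentityMatrix{n-k} & X \\ & 0_k\end{smallmatrix}\right)$. This is not equal to $A_k$ unless $X = 0$. So first we use only the subgroup $H^{+}_k \cap \left\{X = 0\right\} \cong \GL_k\left(\finiteField\right)$. Better, $h A_k = \left(\begin{smallmatrix} \IdentityMatrix{n-k} & \\ & 0_k\end{smallmatrix}\right) = A_k$ for every $h \in H^{+}_k$, since the $X$ block and the $g$ block both hit the zero columns of $A_k$: the first $n-k$ columns of $h$ are $e_1,\dots,e_{n-k}$.

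Hence, by \eqref{eq:equivariance-of-pi-natural}, $\pi(h) \circ \pi^{\natural}\left(A_k\right) = \pi^{\natural}\left(hA_k\right) = \pi^{\natural}\left(A_k\right)$ for all $h \in H^+_k$. Therefore the image of $\pi^{\natural}\left(A_k\right)$ consists of $H^{+}_k$-fixed vectors. Since $\pi$ has strict tensor co-rank $r < k$, the maximality in the definition says $\pi$ has no non-zero $H^+_k$-fixed vectors. So the image is zero and $\pi^{\natural}\left(A_k\right) = 0$.

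The only step needing care is the matrix identity $hA_k = A_k$ and the direction of multiplication in \eqref{eq:equivariance-of-pi-natural} (left factor $g_1$ acts by $\pi(g_1)$ on the left). Writing $h A_k$ blockwise, the product's first block column is $h$'s first block column times $\IdentityMatrix{n-k}$, which is $\left(\begin{smallmatrix}\IdentityMatrix{n-k}\\0\end{smallmatrix}\right)$, and its second block column is zero. This confirms the identity, and no appeal to explicit Gauss sum formulas is needed.
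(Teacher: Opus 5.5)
Your proof is correct and follows the paper's argument: you reduce via \eqref{eq:equivariance-of-pi-natural} to $A_k$ with $k = n - \rank A > r$, use $hA_k = A_k$ for $h \in H^{+}_k$ to see that the image of $\pi^{\natural}(A_k)$ consists of $H^{+}_k$-fixed vectors, and conclude by maximality in the definition of strict tensor co-rank that there are no non-zero such vectors. The only thing to clean up is the abandoned false start with $A_k h$, which should simply be deleted.
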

	\begin{proof}
		Suppose that $A$ has rank $k$, where $0 \le k < n - r$. As explained above, it suffices to prove that $\pi^{\natural}\left(A_{n-k}\right) = 0$. Notice that
		\begin{displaymath}
			\pi\begin{pmatrix}
				\IdentityMatrix{k} & X\\
				& g
			\end{pmatrix} \circ \pi^{\natural}\begin{pmatrix}
			\IdentityMatrix{k}\\
			& 0_{n-k}
			\end{pmatrix} = \pi^{\natural}\begin{pmatrix}
			\IdentityMatrix{k}\\
			& 0_{n-k}
			\end{pmatrix},
		\end{displaymath}
		for every $X \in \Mat{k}{\left(n-k\right)}\left(\finiteField\right)$ and $g \in \GL_{n-k}\left(\finiteField\right)$. 
		
		Thus if $\pi^{\natural}\left(A\right) \ne 0$ we have that $\pi^{\natural}\left(A_{n-k}\right) \ne 0$ which implies that $\pi$ has strict tensor co-rank $\ge n-k>r$, which is a contradiction.	
	\end{proof}
	
	Assume from now on that $\coefficientsField$ has characteristic not dividing $\sizeof{\GL_n\left(\finiteField\right)}$.
	
	For an irreducible representation $\pi$ of $\GL_n\left(\finiteField\right)$, let
	\begin{displaymath}
	\pnHJacquetModule{\pi}{k} = \left\{v \in \pi \mid \pi\left(h\right) v = v \, \text{ for all } h \in H_k^{\pm} \right\}.
	\end{displaymath}
	Using the fact that $\pi^{\vee}$ is isomorphic to $\pi^{\iota}$, one can show that $\dim \pHJacquetModule{\pi}{k} = \dim \nHJacquetModule{\pi}{k}$.
	
	The Levi subgroup
	\begin{displaymath}
	L_{\left(n-k, k\right)} = \left\{\diag\left(g_1, g_2\right) \mid g_1 \in \GL_{n-k}\left(\finiteField\right), g_2 \in \GL_{k}\left(\finiteField\right) \right\} \cong \GL_{n-k}\left(\finiteField\right) \times \GL_k\left(\finiteField\right)
	\end{displaymath}
	preserves the spaces $\pHJacquetModule{\pi}{k}$ and $\nHJacquetModule{\pi}{k}$. Moreover, the second component of $L_{\left(n-k,k\right)}$ acts trivially on both spaces.
	
	Let $\ProjectionOperator_{\pnHJacquetModule{\pi}{k}} \colon \pi \to \pnHJacquetModule{\pi}{k}$ be the projection operator. It is given by the formula
	\begin{displaymath}
	\ProjectionOperator_{\pnHJacquetModule{\pi}{k}} = \frac{1}{\sizeof{H_k^{\pm}}} \sum_{h \in H_k^{\pm}} \pi\left(h\right).
	\end{displaymath}

	We may now state our result about the degenerate Gauss sum corresponding to $A_k$.
	\begin{theorem}\label{thm:gauss-sum-in-terms-of-projection-operator}
		For $0 \le k \le n$ let \begin{displaymath}
			\GaussSum{\pi}{\fieldCharacter}_k = q^{-\frac{n^2}{2}} \sum_{g \in \GL_n\left(\finiteField\right)} \fieldCharacter\left(\trace\left(\begin{pmatrix}
				I_{n-k}\\
				& 0_k
			\end{pmatrix} g^{-1}\right)\right) \pi\left(g\right).
		\end{displaymath}
		Suppose that $\pi$ is of strict tensor co-rank $\ge k$. Then the operator
			\begin{displaymath}
				T = \ProjectionOperator_{\nHJacquetModule{\pi}{k}} \restriction_{\pHJacquetModule{\pi}{k}} \colon \pHJacquetModule{\pi}{k} \to \nHJacquetModule{\pi}{k}
			\end{displaymath}
			is an isomorphism, and $\GaussSum{\pi}{\fieldCharacter}_k$ is given by
			\begin{displaymath}
			\GaussSum{\pi}{\fieldCharacter}_k = q^{-\frac{k^2}{2}} \sizeof{\GL_{k}\left(\finiteField\right)} \GaussSumScalar{\sigma}{\fieldCharacter} T^{-1} \circ \ProjectionOperator_{\nHJacquetModule{\pi}{k}},
			\end{displaymath}
			where $\sigma$ is any irreducible representation of $\GL_{n-k}\left(\finiteField\right)$ such that $\sigma \otimes 1_{\GL_k}$ is a subrepresentation of $\pHJacquetModule{\pi}{k}$.
	\end{theorem}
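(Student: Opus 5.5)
Write $\GaussSum{\pi}{\fieldCharacter}_k = q^{-\frac{n^2}{2}} \sum_{g} \fieldCharacter\left(\trace\left(A_{k}' g^{-1}\right)\right) \pi\left(g\right)$ with $A_k' = \diag\left(\IdentityMatrix{n-k}, 0_k\right)$. The plan is to first use the invariance properties of $A_k'$ to factor $\GaussSum{\pi}{\fieldCharacter}_k$ through $\ProjectionOperator_{\nHJacquetModule{\pi}{k}}$. Then I will compute $\ProjectionOperator_{\nHJacquetModule{\pi}{k}} \circ \GaussSum{\pi}{\fieldCharacter}_k$ explicitly as a scalar multiple of $\ProjectionOperator_{\nHJacquetModule{\pi}{k}}$. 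Finally I will read off both the invertibility of $T$ and the formula.

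\emph{Step 1 (invariances).} For $h \in H_k^+$ we have $h A_k' = A_k'$. Substituting $g \mapsto h^{-1} g$ and using $\trace\left(A_k' g^{-1} h\right) = \trace\left(h A_k' g^{-1}\right)$ gives $\pi\left(h\right) \GaussSum{\pi}{\fieldCharacter}_k = \GaussSum{\pi}{\fieldCharacter}_k$. Hence the image of $\GaussSum{\pi}{\fieldCharacter}_k$ lies in $\pHJacquetModule{\pi}{k}$. Similarly, for $h \in H_k^-$ we have $A_k' h = A_k'$, so $\GaussSum{\pi}{\fieldCharacter}_k \circ \pi\left(h\right) = \GaussSum{\pi}{\fieldCharacter}_k$. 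Averaging over $H_k^-$ gives $\GaussSum{\pi}{\fieldCharacter}_k = \GaussSum{\pi}{\fieldCharacter}_k \circ \ProjectionOperator_{\nHJacquetModule{\pi}{k}}$. Thus $\GaussSum{\pi}{\fieldCharacter}_k = S \circ \ProjectionOperator_{\nHJacquetModule{\pi}{k}}$ for a unique map $S \colon \nHJacquetModule{\pi}{k} \to \pHJacquetModule{\pi}{k}$. Both $S$ and $T$ are equivariant for the $\GL_{n-k}\left(\finiteField\right)$ factor of the Levi subgroup $L_{\left(n-k,k\right)}$, since $\diag\left(g_1, \IdentityMatrix{k}\right)$ commutes with $A_k'$.

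\emph{Step 2 (the key computation).} I will show that $\ProjectionOperator_{\nHJacquetModule{\pi}{k}} \circ \GaussSum{\pi}{\fieldCharacter}_k = c \cdot \ProjectionOperator_{\nHJacquetModule{\pi}{k}}$ on each $\sigma \otimes 1$-isotypic piece, with $c = q^{-\frac{k^2}{2}} \sizeof{\GL_k\left(\finiteField\right)} \GaussSumScalar{\sigma}{\fieldCharacter}$. Rewrite the sum as $q^{-\frac{n^2}{2}}\sum_g \fieldCharacter\left(\trace a\left(g\right)\right)\pi\left(g^{-1}\right)$, where $a\left(g\right)$ is the upper-left $\left(n-k\right)\times\left(n-k\right)$ block of $g$. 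This block is invariant under $g \mapsto h g h'$ for $h \in H_k^-$ and $h' \in H_k^+$ with identity lower-right block, roughly.

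For $g$ with $a = a\left(g\right)$ invertible, use the factorization
$$g = \begin{pmatrix} \IdentityMatrix{n-k} & \\ c a^{-1} & \IdentityMatrix{k}\end{pmatrix}\diag\left(a, d - c a^{-1} b\right)\begin{pmatrix}\IdentityMatrix{n-k} & a^{-1}b\\ & \IdentityMatrix{k}\end{pmatrix}.$$
After sandwiching with $\ProjectionOperator_{\nHJacquetModule{\pi}{k}}$ on both sides, the lower unipotent factors are absorbed. The sum over $d - ca^{-1}b \in \GL_k\left(\finiteField\right)$ acts trivially on $\nHJacquetModule{\pi}{k}$ and produces $\sizeof{\GL_k\left(\finiteField\right)}$. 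The sum over $c$ and $b$ produces powers of $q$. The sum over $a$ of $\fieldCharacter\left(\trace a\right)\pi\left(\diag\left(a^{-1},\IdentityMatrix{k}\right)\right)$ on the $\sigma\otimes 1$ part is exactly $q^{\frac{\left(n-k\right)^2}{2}}\GaussSumScalar{\sigma}{\fieldCharacter}$ by the definition of Kondo's Gauss sum. The remaining upper unipotent factor $\pi\left(\begin{smallmatrix}\IdentityMatrix{} & a^{-1}b\\ & \IdentityMatrix{}\end{smallmatrix}\right)$, summed over $b$, combines with the outer $\ProjectionOperator_{\nHJacquetModule{\pi}{k}}$; here I expect to use that averaging over $b$ gives a projection onto $\left(\begin{smallmatrix}\IdentityMatrix{} & \ast\\ & \IdentityMatrix{}\end{smallmatrix}\right)$-invariants and then track how it interacts with $\nHJacquetModule{\pi}{k}$.

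\emph{Main obstacle.} The main obstacle is the degenerate terms where $a\left(g\right)$ is singular. My plan there is to stratify by $\rank a\left(g\right)$. On each stratum I would exhibit an additional unipotent subgroup under which the summand transforms by a non-trivial character after sandwiching with $\ProjectionOperator_{\nHJacquetModule{\pi}{k}}$, so that the stratum contributes zero. Alternatively, I would reduce to Step 1 applied to a smaller $A_j'$. The hypothesis that $\pi$ has strict tensor co-rank $\ge k$ enters here, together with semisimplicity of $\pi$ restricted to $L_{\left(n-k,k\right)}$. The same analysis should also show that the scalar $c$ does not depend on which $\sigma$ is chosen, matching the phrase ``any irreducible representation $\sigma$'' in the statement.

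\emph{Step 3 (conclusion).} From Steps 1 and 2 we get $T \circ S = c \cdot \idmap_{\nHJacquetModule{\pi}{k}}$. Here $c \neq 0$ by \Cref{rem:kondo-gauss-sum-is-non-zero}, and $\nHJacquetModule{\pi}{k} \neq 0$ by the co-rank hypothesis. Hence $T$ is surjective. Since $\dim \pHJacquetModule{\pi}{k} = \dim \nHJacquetModule{\pi}{k}$, $T$ is an isomorphism. Then $S = c\, T^{-1}$, and so $\GaussSum{\pi}{\fieldCharacter}_k = c\, T^{-1} \circ \ProjectionOperator_{\nHJacquetModule{\pi}{k}}$, which is the claimed formula.
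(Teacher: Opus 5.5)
There is a genuine gap in Step 2. The identity you aim for there is true (it follows from the theorem), but your plan to show that the strata with $a(g)$ singular contribute zero cannot succeed, because they do not contribute zero.

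\textbf{A counterexample to the vanishing.} Take $n=2$, $k=1$ and $\pi$ trivial. Then $\pHJacquetModule{\pi}{1}=\nHJacquetModule{\pi}{1}=\pi$, $T=\idmap$, and $c=q^{-\frac{1}{2}}(q-1)\GaussSumScalar{1}{\fieldCharacter}=-(q-1)/q$. Here $\GaussSum{\pi}{\fieldCharacter}_1=q^{-2}\sum_{g}\fieldCharacter(g_{11})$.
\begin{itemize}
\item The stratum $g_{11}\neq 0$ contributes $-(q-1)=q\,c$.
\item The stratum $g_{11}=0$ contributes $q^{-2}\cdot q(q-1)^2\neq 0$. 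Every summand in it equals $1$, so no non-trivial character can cancel it.
\end{itemize}

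\textbf{What the open stratum actually gives.} In general, your block factorization on the open stratum produces the following. The sums over the two off-diagonal blocks and the Schur complement give $q^{2k(n-k)}\sizeof{\GL_k(\finiteField)}$ times the average over the unipotent radical of $H_k^+$. On $\nHJacquetModule{\pi}{k}$ that average coincides with $\ProjectionOperator_{\pHJacquetModule{\pi}{k}}$. So on $\nHJacquetModule{\pi}{k}$ the open stratum equals
\[ q^{k(n-k)}\, c\; T\circ \ProjectionOperator_{\pHJacquetModule{\pi}{k}}\restriction_{\nHJacquetModule{\pi}{k}}, \]
which is not $c\cdot\idmap$. The singular strata must supply the generally nonzero difference. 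Sandwiching between two copies of $\ProjectionOperator_{\nHJacquetModule{\pi}{k}}$ is therefore the wrong test: it leads to a hard computation on degenerate strata, not a vanishing argument. Your alternative, reducing to a smaller $A_j'$, is too vague to assess.

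\textbf{The missing idea.} Test $\GaussSum{\pi}{\fieldCharacter}_k$ on $\pHJacquetModule{\pi}{k}$ instead. That is, compute $\GaussSum{\pi}{\fieldCharacter}_k\circ\ProjectionOperator_{\pHJacquetModule{\pi}{k}}$ by right-multiplying $\sum_g\fieldCharacter(\trace(A_k'g^{-1}))[g]$ by the average of $[h^+]$ over $H_k^+$.
\begin{itemize}
\item Now $A_k'h^+=\left(\begin{smallmatrix}\IdentityMatrix{n-k}&X\\&0_k\end{smallmatrix}\right)$ is not absorbed.
\item Let $g_{21}$ be the lower-left block of $g^{-1}$. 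The sum over $X$ of $\fieldCharacter(\trace(Xg_{21}))$ kills every $g$ outside the block upper-triangular parabolic, so no degenerate strata ever appear.
\item Write each surviving $g$ as $\diag(h_{11},\IdentityMatrix{k})\,h^+$ and absorb $h^+$ into the projection. On each constituent $\sigma_j\otimes 1$ of $\pHJacquetModule{\pi}{k}$ this leaves $\sum_{h_{11}}\fieldCharacter(\trace h_{11}^{-1})\sigma_j(h_{11})=q^{\frac{(n-k)^2}{2}}\GaussSumScalar{\sigma_j}{\fieldCharacter}$.
\end{itemize}
In your notation this gives $S\circ T=c\cdot\idmap_{\pHJacquetModule{\pi}{k}}$. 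So $T$ is injective, hence bijective by equality of dimensions, and $S=cT^{-1}$. Your Steps 1 and 3 then finish the proof, with injectivity replacing surjectivity.

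\textbf{Independence of $\sigma$.} That $c$ does not depend on $\sigma$ does not come out of any such computation. You need two further inputs:
\begin{itemize}
\item all $\sigma_j$ occurring in $\pHJacquetModule{\pi}{k}$ share a cuspidal support, since they are constituents of a Jacquet module of $\pi$ with trivial $\GL_k$-part;
\item Kondo's Gauss sum depends only on the cuspidal support, which the paper takes from~\cite[Corollary 4.5]{KurinczukMatringeSecherre2026}.
\end{itemize}
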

	\begin{proof}
		Since $h^+ \left(\begin{smallmatrix}
			\IdentityMatrix{n-k}\\
			& 0_k
		\end{smallmatrix}\right) h^- = \left(\begin{smallmatrix}
			\IdentityMatrix{n-k}\\
			& 0_k
		\end{smallmatrix}\right)$ for any $h^+ \in H^{+}_k$ and $h^- \in H^{-}_k$, we have that
		\begin{displaymath}
		\GaussSum{\pi}{\fieldCharacter}_k = \ProjectionOperator_{\pHJacquetModule{\pi}{k}} \circ \GaussSum{\pi}{\fieldCharacter}_k \circ \ProjectionOperator_{\nHJacquetModule{\pi}{k}}.
		\end{displaymath}
		
		Consider the following element of the group algebra $\coefficientsField\left[\GL_n\left(\finiteField\right)\right]$:
		\begin{displaymath}
		e_{\fieldCharacter, k}^{\vee} = q^{-\frac{n^2}{2}} \sum_{g \in \GL_n\left(\finiteField\right)} \fieldCharacter\left(\trace \left(\begin{pmatrix}
			\IdentityMatrix{n-k}\\
			& 0_k
		\end{pmatrix}g^{-1}\right)\right)\left[g\right].
		\end{displaymath}
		Also consider the element
		\begin{displaymath}
		e_{H^+_k} = \frac{1}{\sizeof{H_k^{+}}}\sum_{h^+ \in H^{+}_k} \left[h^{+}\right].
		\end{displaymath}
		Then the product $e_{\fieldCharacter, k}^{\vee} \cdot e_{H^+_k}$ equals
		\begin{displaymath}
		q^{-\frac{n^2}{2}} \frac{1}{\sizeof{H_k^{+}}} \sum_{\substack{g \in \GL_n\left(\finiteField\right)\\
				h^+ \in H^{+}_k}} \fieldCharacter\left(\trace \left(\begin{pmatrix}
			\IdentityMatrix{n-k}\\
			& 0_k
		\end{pmatrix} h^{+} g^{-1}\right)\right)\left[g\right],
		\end{displaymath}
		which equals
		\begin{displaymath}
		q^{-\frac{n^2}{2}-\left(n-k\right)k} \sum_{g \in \GL_n\left(\finiteField\right)} \sum_{X \in \Mat{\left(n-k\right)}{k}\left(\finiteField\right)} \fieldCharacter\left(\trace \left(\begin{pmatrix}
			\IdentityMatrix{n-k} & X\\
			& 0_{k}
		\end{pmatrix} g^{-1} \right)\right) \left[g\right].
		\end{displaymath}
		Fix $g \in \GL_n\left(\finiteField\right)$ and write
		\begin{displaymath}
		g^{-1} = \begin{pmatrix}
			g_{11} & g_{12}\\
			g_{21} & g_{22}
		\end{pmatrix},
		\end{displaymath}
		where $g_{11} \in \squareMatrix_{n-k}\left(\finiteField\right)$, $g_{12} \in \Mat{\left(n-k\right)}{k}\left(\finiteField\right)$, $g_{21} \in \Mat{k}{\left(n-k\right)}\left(\finiteField\right)$, and $g_{22} \in \squareMatrix_{k}\left(\finiteField\right)$. Then the inner sum over $X$ equals
		\begin{displaymath}
		\sum_{X \in \Mat{\left(n-k\right)}{k}\left(\finiteField\right)} \fieldCharacter\left(\trace g_{11} + \trace\left(Xg_{21}\right)\right).
		\end{displaymath}
		The last sum is zero unless $g_{21} = 0$. Writing $g = \left(\begin{smallmatrix}
			h_{11} & h_{12}\\
			 & h_{22}
		\end{smallmatrix}\right)$ with $h_{11} \in \GL_{n-k}\left(\finiteField\right)$, $h_{12} \in \Mat{\left(n-k\right)}{k}\left(\finiteField\right)$, and $h_{22} \in \GL_k\left(\finiteField\right)$, we have that $e_{\fieldCharacter, k}^{\vee} \cdot e_{H^+_k}$ equals
		\begin{displaymath}
		q^{-\frac{n^2}{2}} \sum_{\substack{h_{11} \in \GL_{n-k}\left(\finiteField\right)\\
				h_{12} \in \Mat{\left(n-k\right)}{k}\left(\finiteField\right)\\
				h_{22} \in \GL_{k}\left(\finiteField\right) }} \fieldCharacter\left(\trace h_{11}^{-1}\right) \left[\begin{pmatrix}
			h_{11} & h_{12}\\
			& h_{22}
		\end{pmatrix}\right],
		\end{displaymath}
		which is
		\begin{displaymath}
		q^{-\frac{n^2}{2}} \sizeof{H^+_k} \left(\sum_{h_{11} \in \GL_{n-k}\left(\finiteField\right)} \fieldCharacter\left(\trace h_{11}^{-1}\right) \left[\begin{pmatrix}
			h_{11}\\
			& \IdentityMatrix{k}
		\end{pmatrix}\right]\right) \cdot e_{H_k^+}.
		\end{displaymath}
		Thus $\GaussSum{\pi}{\fieldCharacter}_k \circ \ProjectionOperator_{\pHJacquetModule{\pi}{k}} = \pi\left(e_{\fieldCharacter, k}^{\vee} \cdot e_{H^+_k}\right)$ is given by
		\begin{displaymath}
		\GaussSum{\pi}{\fieldCharacter}_k \circ \ProjectionOperator_{\pHJacquetModule{\pi}{k}} = q^{-\frac{n^2}{2}} \sizeof{H^+_k} \sum_{h_{11} \in \GL_{n-k}\left(\finiteField\right)} \fieldCharacter\left(\trace h_{11}^{-1}\right) \pi\begin{pmatrix}
			h_{11}\\
			& \IdentityMatrix{k}
		\end{pmatrix} \circ \ProjectionOperator_{\pHJacquetModule{\pi}{k}}.
		\end{displaymath}
		Let us write
		\begin{displaymath}
		\pHJacquetModule{\pi}{k} = \bigoplus_{j=1}^{\ell} \sigma_j \otimes 1_{\GL_k\left(\finiteField\right)},
		\end{displaymath}
		where $\sigma_j$ are irreducible representations. Then $\sigma_1,\dots,\sigma_{\ell}$ all have the same cuspidal support. We have that
		\begin{displaymath}
		\sum_{h_{11} \in \GL_{n-k}\left(\finiteField\right)} \fieldCharacter\left(\trace h_{11}^{-1}\right) \pi\begin{pmatrix}
			h_{11}\\
			& \IdentityMatrix{k}
		\end{pmatrix}\restriction_{\sigma_j} = \sum_{h_{11} \in \GL_{n-k}\left(\finiteField\right)} \fieldCharacter\left(\trace h_{11}^{-1}\right) \sigma_j\left(h_{11}\right) = q^{\frac{\left(n-k\right)^2}{2}} \GaussSumScalar{\sigma_j}{\fieldCharacter} \cdot \idmap_{\sigma_j}.
		\end{displaymath}
		
		Since $\sigma_1$, $\dots$, $\sigma_{\ell}$ all have the same cuspidal support, we have from~\cite[Corollary 4.5]{KurinczukMatringeSecherre2026} that
		\begin{displaymath}
		\GaussSumScalar{\sigma_1}{\fieldCharacter} = \dots = \GaussSumScalar{\sigma_{\ell}}{\fieldCharacter},
		\end{displaymath}
		and thus
		\begin{displaymath}
		\GaussSum{\pi}{\fieldCharacter}_k \circ \ProjectionOperator_{\pHJacquetModule{\pi}{k}} = q^{-\frac{k^2}{2}} \sizeof{\GL_k\left(\finiteField\right)} \GaussSumScalar{\sigma}{\fieldCharacter} \ProjectionOperator_{\pHJacquetModule{\pi}{k}}.
		\end{displaymath}
		
		To summarize, we have that $\GaussSum{\pi}{\fieldCharacter}_k \circ T \circ \ProjectionOperator_{\pHJacquetModule{\pi}{k}} = \GaussSum{\pi}{\fieldCharacter}_k \circ \ProjectionOperator_{\pHJacquetModule{\pi}{k}}$ acts on $\pHJacquetModule{\pi}{k}$ by the non-zero scalar $q^{-\frac{k^2}{2}} \sizeof{\GL_k\left(\finiteField\right)} \GaussSumScalar{\sigma}{\fieldCharacter} \in \multiplicativegroup{\coefficientsField}$. Since $\pHJacquetModule{\pi}{k}$ and $\nHJacquetModule{\pi}{k}$ have the same dimension, it follows that $T$ is an isomorphism.
		
		Since the restriction of  $\ProjectionOperator_{\pHJacquetModule{\pi}{k}}$ to $\pHJacquetModule{\pi}{k}$ is the identity map we have that
		\begin{displaymath}
			\GaussSum{\pi}{\fieldCharacter}_k \restriction_{\nHJacquetModule{\pi}{k}} \circ T = q^{-\frac{k^2}{2}} \sizeof{\GL_k\left(\finiteField\right)} \GaussSumScalar{\sigma}{\fieldCharacter} \idmap_{\pHJacquetModule{\pi}{k}}.
		\end{displaymath}
		Combining this with $\GaussSum{\pi}{\fieldCharacter}_k = \GaussSum{\pi}{\fieldCharacter}_k \circ \ProjectionOperator_{\nHJacquetModule{\pi}{k}}$, we get the result.
	\end{proof}
	
	\begin{corollary}
		 For $0 \le k \le n$ let $\GaussSumScalar{\pi}{\fieldCharacter}_k = \GaussSumScalarFunction{\pi}{\fieldCharacter}\left(\begin{smallmatrix}
		 	\IdentityMatrix{n-k}\\
		 	& 0_{k}
		 \end{smallmatrix}\right)$.
		Then the Gauss sum $\GaussSumScalar{\pi}{\fieldCharacter}_k$ is zero unless $\pi$ is of strict tensor co-rank $\ge k$, in which case we have
		\begin{equation*}
			\GaussSumScalar{\pi}{\fieldCharacter}_k = q^{-\frac{k^2}{2}} \sizeof{\GL_k\left(\finiteField\right)} \dim \pHJacquetModule{\pi}{k} \cdot \GaussSumScalar{\sigma}{\fieldCharacter}.
		\end{equation*}
	\end{corollary}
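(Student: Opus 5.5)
The plan is to take the trace of the operator identity in \Cref{thm:gauss-sum-in-terms-of-projection-operator}. By definition, $\GaussSumScalar{\pi}{\fieldCharacter}_k = \GaussSumScalarFunction{\pi}{\fieldCharacter}\left(A\right)$ with $A = \diag\left(\IdentityMatrix{n-k}, 0_k\right)$. Expanding the definition of $\GaussSumScalarFunction{\pi}{\fieldCharacter}$, this is exactly $\trace \GaussSum{\pi}{\fieldCharacter}_k$, since the trace is linear and $\GaussSum{\pi}{\fieldCharacter}_k$ is the same sum with $\trace \pi\left(g\right)$ replaced by $\pi\left(g\right)$.

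\emph{The vanishing case.} Suppose $\pi$ has strict tensor co-rank $< k$. The matrix $A$ has rank $n-k$, and the strict tensor co-rank $r$ satisfies $r < k$, i.e.\ $n - k < n - r$. So \Cref{prop:strict-tensor-co-rank-and-vanishing} gives $\pi^{\natural}\left(A\right) = 0$. Then \eqref{eq:gauss-sum-function-in-terms-of-pi-natural} gives $\GaussSumScalarFunction{\pi}{\fieldCharacter}\left(A\right) = \GaussSumScalar{\pi}{\fieldCharacter} \trace \pi^{\natural}\left(A\right) = 0$.

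\emph{The non-vanishing case.} Suppose $\pi$ has strict tensor co-rank $\ge k$. By \Cref{thm:gauss-sum-in-terms-of-projection-operator},
\begin{displaymath}
\trace \GaussSum{\pi}{\fieldCharacter}_k = q^{-\frac{k^2}{2}} \sizeof{\GL_k\left(\finiteField\right)} \GaussSumScalar{\sigma}{\fieldCharacter} \trace\left(T^{-1} \circ \ProjectionOperator_{\nHJacquetModule{\pi}{k}}\right),
\end{displaymath}
where $T^{-1} \circ \ProjectionOperator_{\nHJacquetModule{\pi}{k}}$ is regarded as an endomorphism of $\pi$ via the inclusion $\pHJacquetModule{\pi}{k} \subset \pi$. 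It remains to show that this endomorphism has trace $\dim \pHJacquetModule{\pi}{k}$; this is the only real step.

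Call the endomorphism $P = \iota_{+} \circ T^{-1} \circ \ProjectionOperator_{\nHJacquetModule{\pi}{k}}$, where $\iota_{+}$ is the inclusion of $\pHJacquetModule{\pi}{k}$. Its image lies in $\pHJacquetModule{\pi}{k}$. I claim $P$ is an idempotent with image exactly $\pHJacquetModule{\pi}{k}$. Take $v \in \pHJacquetModule{\pi}{k}$. Then $\ProjectionOperator_{\nHJacquetModule{\pi}{k}} v = T v$, because $T$ is by definition the restriction of $\ProjectionOperator_{\nHJacquetModule{\pi}{k}}$ to $\pHJacquetModule{\pi}{k}$. Hence $P v = T^{-1} T v = v$, so $P$ is the identity on its image.

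An idempotent has trace equal to the rank of its image, so $\trace P = \dim \pHJacquetModule{\pi}{k}$. (If one prefers to avoid the idempotent argument, use cyclicity of the trace, $\trace(\iota_{+} \circ S) = \trace(S \circ \iota_{+})$ with $S = T^{-1} \circ \ProjectionOperator_{\nHJacquetModule{\pi}{k}}$. Here $S \circ \iota_{+} = T^{-1} \circ T = \idmap_{\pHJacquetModule{\pi}{k}}$, which gives the same answer immediately.) This yields the stated formula.

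The step needing the most care is the identification of $T^{-1}\circ \ProjectionOperator$ as an operator on $\pi$ and its trace, which the cyclicity argument handles cleanly. Everything else, including the use of the hypothesis on $\sigma$ and the non-vanishing of the scalar, is inherited directly from \Cref{thm:gauss-sum-in-terms-of-projection-operator}.
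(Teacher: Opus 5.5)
Your proof is correct and follows the paper's own argument. You handle the vanishing case via \Cref{prop:strict-tensor-co-rank-and-vanishing} together with $\GaussSumScalarFunction{\pi}{\fieldCharacter}(A) = \GaussSumScalar{\pi}{\fieldCharacter}\trace \pi^{\natural}(A)$, and the non-vanishing case by taking the trace of the identity in \Cref{thm:gauss-sum-in-terms-of-projection-operator}, noting that $T^{-1}\circ \ProjectionOperator_{\nHJacquetModule{\pi}{k}}$ has image in $\pHJacquetModule{\pi}{k}$ and restricts to the identity there, so its trace is $\dim \pHJacquetModule{\pi}{k}$.
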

	\begin{proof}
		From \Cref{prop:strict-tensor-co-rank-and-vanishing} combined with \eqref{eq:expression-of-pi-natural-with-gauss-sums} we have that $\GaussSumScalar{\pi}{\fieldCharacter}_k$ vanishes if $\pi$ is of strict tensor co-rank $< k$.
		
		Assume that $\pi$ has strict tensor co-rank $\ge k$. We have that
		\begin{displaymath}
			\GaussSumScalar{\pi}{\fieldCharacter}_k = \trace\left(\GaussSum{\pi}{\fieldCharacter}_k\right) = q^{-\frac{k^2}{2}} \sizeof{\GL_{k}\left(\finiteField\right)} \GaussSumScalar{\sigma}{\fieldCharacter} \trace\left(T^{-1} \circ \ProjectionOperator_{\nHJacquetModule{\pi}{k}}\right).
		\end{displaymath}
		We have that $T^{-1}$ has image in $\pHJacquetModule{\pi}{k}$ and that the restriction of $T^{-1} \circ \ProjectionOperator_{\nHJacquetModule{\pi}{k}}$ to $\pHJacquetModule{\pi}{k}$ is the identity map. Thus $\trace\left(T^{-1} \circ \ProjectionOperator_{\nHJacquetModule{\pi}{k}}\right) = \dim \pHJacquetModule{\pi}{k}$.
	\end{proof}

\bibliographystyle{abbrv}
\bibliography{gj-references}
\end{document}